\newtheorem{theorem}{Theorem}[section]
\newtheorem{claim}[theorem]{Claim}
\newtheorem{corollary}[theorem]{Corollary}
\newtheorem{definition}[theorem]{Definition}
\newtheorem{lemma}[theorem]{Lemma}
\newtheorem{fact}[theorem]{Fact}
\newtheorem{proposition}[theorem]{Proposition}
\DeclareMathOperator{\Int}{Int}
\theoremstyle{definition}
\newtheorem{remark}[theorem]{Remark}
\DeclareMathOperator{\acl}{acl}
\DeclareMathOperator{\cl}{cl}\DeclareMathOperator{\dcl}{dcl}
\DeclareMathOperator{\tp}{tp} 
\DeclareMathOperator{\Th}{Th}
\long\def\symbolfootnote[#1]#2{\begingroup%
\def\thefootnote{\fnsymbol{footnote}}\footnote[#1]{#2}\endgroup}
\def\Ind#1#2{#1\setbox0=\hbox{$#1x$}\kern\wd0\hbox to 0pt{\hss$#1\mid$\hss}
\lower.9\ht0\hbox to 0pt{\hss$#1\smile$\hss}\kern\wd0}
\def\ind{\mathop{\mathpalette\Ind{}}}
\def\Notind#1#2{#1\setbox0=\hbox{$#1x$}\kern\wd0\hbox to 0pt{\mathchardef
\nn=12854\hss$#1\nn$\kern1.4\wd0\hss}\hbox to
0pt{\hss$#1\mid$\hss}\lower.9\ht0 \hbox to
0pt{\hss$#1\smile$\hss}\kern\wd0}
\def\nind{\mathop{\mathpalette\Notind{}}}
\def\nthind{\mathop{\mathpalette\Notind{}}^{\text{\th}} }
\def\uth{\text{U}^{\text{\th}} }
\def\ur{\text{U}}
\def\tho{\text{\th}}
\DeclareMathOperator{\Tho}{\tho}
\def\po{\leq_p}
\def\poo{\leq_p^0}
\def\pot{\leq_t}
\def\mtp{\tp_{\CM}}
\def\mdim{\dim_{\CM}}
\def\&{\wedge}
\def\0{\emptyset}
\def\and{\wedge}
\def\Nn{\mathbb{N}}
\def\CN{\mathcal{N}}
\def\CC{\mathcal{C}}
\def\CM{\mathcal{M}}
\def\CF{\mathcal F}
\def\cupI{\bigcup_i I_i}
\begin{document}

\title{Unstable structures definable in o-minimal theories}


\author[A. Hasson]{Assaf Hasson$^*$}
\thanks{$^*$Supported by the EPSRC grant no. EP C52800X 1}
\address{University of Oxford, Mathematical Institute, 24-29 St Giles', Oxford, OX1 3LB, UK}
\email{hasson@maths.ox.ac.uk }

\date{\today}

\author[A. Onshuus]{Alf Onshuus}

\address{Universidad de los Andes,
Departemento de Matem\'aticas, Cra. 1 No 18A-10, Bogot\'{a},
Colombia} \email{onshuus@gmail.com}

\begin{abstract}
Let $\CM$ be an o-minimal structure with elimination of
imaginaries, $\CN$ an unstable structure definable in $\CM$. Then
there exists $X$, definable in $\CN^{eq}$, such that $X$ with all
the structure induced from $\CN$ is o-minimal. In particular $X$
is linearly ordered. \\ As part of the proof we show:
\texttt{Theorem 1:} If $\dim_{\CM}N = 1$ then any $p\in S_1(N)$ is
either strongly stable or finite by o-minimal. \texttt{Theorem 2:}
If $N$ is $\CN$-minimal then $\dim_{\CM}N=1$.
\end{abstract}

\maketitle

\section{Background}

\subsection{Introduction}

Various results in and around o-minimality give hope that some
classification of theories interpretable in o-minimal structures
may exist. As a first step in that direction Y. Peterzil suggested
to examine whether theories interpretable in o-minimal structures
admitted some sort of analysis in terms of (\th-)minimal types
satisfying Zilber's trichotomy.

Zilber's idea of classifying the combinatorial geometries
associated with minimal types as trivial, linear or ``field like''
is an important source of inspiration for model theoretic
research. Although not always possible, the search for such a
classification has, in many cases, resulted in a deeper
understanding of the fine structure of the theories in question. The
results of \cite{peterzil-starchenko} and \cite{hrushovski-zilber}
leave room for the hope that such a trichotomy may hold for minimal types in
theories interpretable in o-minimal structures.

Naturally, any such classification will have to comprise Zilber's
trichotomy for minimal stable structures in o-minimal theories, a
prospect which seems, at the moment, out of reach. The unstable
case, however, seems quite accessible relying on the Trichotomy
Theorem for o-minimal structures of \cite{peterzil-starchenko} and
Shelah's early analysis of dependent theories.

To tackle the unstable case, Peterzil's suggestion was to
prove first that any such structure interprets an o-minimal set.
In this paper we carry this out, obtaining somewhat sharper
results.

\begin{definition}
$\left.\right.$

\begin{enumerate}
 \item Let $\CN$ be any structure and $X\subseteq N^n$ definable. Say
that \emph{$X$ is finite by o-minimal} if there is a definable
equivalence relation $E$ with finite classes and domain $X$ and a
definable linear order $<$ on $X/E$ such that $(X/E, <)$, with all
the induced structure from $\CN$,  is o-minimal.

\item \noindent A
type $p$ over $A\subset X$ is finite by o-minimal if it is \th-minimal (i.e. has
$\uth$-rank 1, see Definition \ref{thorking}) and such that there
is a non algebraic extension of $p$ containing a finite by
o-minimal formula.
\end{enumerate}
\end{definition}

We can now state our main result:

\begin{theorem}\label{mainintro}
Let $\CN$ be definable in an o-minimal structure. Then either
there is an $\CN$-definable subset of $N$ which is finite by
o-minimal or there is an $\CN$-definable subset of $N$ which is
stable.

Moreover, if $\CN$ is $\kappa$-saturated and $|N_0|<\kappa$ for
some $N_0\subset N$ then every type $p\in S_1^{\CN}(N_0)$ can be
extended to a non algebraic type $q\in S_1^{\CN}(N)$ which is either
finite by o-minimal or strongly stable.
\end{theorem}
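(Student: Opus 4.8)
The plan is to reduce the main theorem to the two auxiliary theorems stated in the abstract by an induction on $\dim_{\CM} N$. First I would assume $\CN$ is $\kappa$-saturated for a suitable large $\kappa$, so that nonforking extensions and $\uth$-rank behave well; since $\CN$ is definable in an o-minimal $\CM$ (which has elimination of imaginaries), every $\CN$-definable set carries an $\CM$-dimension, and $\dim_{\CM} N$ is a well-defined nonnegative integer. The base case is $\dim_{\CM} N = 1$: here Theorem~1 says directly that any $p \in S_1(N)$ is either strongly stable or finite by o-minimal, so we are done, and to handle $S_1^{\CN}(N_0)$ we just take any nonalgebraic global extension and apply Theorem~1 to it.

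For the inductive step, suppose $\dim_{\CM} N = d > 1$ and that the result is known for all $\CN$-definable sets of smaller $\CM$-dimension. I would argue by cases on whether $\CN$ is $\CN$-minimal, i.e. whether $N$ itself is a minimal set in the sense of the ambient theory. If $\CN$ is $\CN$-minimal, Theorem~2 forces $\dim_{\CM} N = 1$, contradicting $d > 1$; so $N$ is not $\CN$-minimal and hence contains a proper infinite co-infinite $\CN$-definable subset, or more usefully an $\CN$-definable set $Y$ (possibly in $\CN^{eq}$, e.g. a fibre or a quotient) with $\dim_{\CM} Y < d$. Running the induction hypothesis on $Y$ produces an $\CN$-definable subset of $Y$ that is finite by o-minimal or stable; I would then need to transfer this conclusion back to a subset of $N$ itself. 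This transfer is where one has to be careful: a finite-by-o-minimal or stable structure found in $\CN^{eq}$ needs to be "pulled down" to $N$, which is where an argument about definable choice / the structure of $\CN$-definable fibrations over $Y$, together with the fact that o-minimality and stability of the induced structure are preserved under the relevant definable correspondences, comes in.

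For the "moreover" clause about types, I would take $p \in S_1^{\CN}(N_0)$ with $|N_0| < \kappa$ and look at its nonforking (or $\tho$-nonforking) extension $q$ to $S_1^{\CN}(N)$; if $q$ is algebraic, replace $p$ by a nonalgebraic type over $N_0$ using saturation. The $\uth$-rank of $q$ is then a positive integer, and I would induct on it: if it is $1$, then $q$ is $\tho$-minimal and I apply the dichotomy from Theorem~1 (after locating a suitable $1$-dimensional $\CN^{eq}$-definable set on which $q$, or its image, lives); if it is larger, I find a formula in $q$ defining a set of smaller $\uth$-rank, analyze a type on it, and lift the resulting finite-by-o-minimal or strongly-stable behaviour back up along the forking chain.

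The main obstacle I anticipate is precisely the descent/lifting step: going from an o-minimal or stable set obtained in $\CN^{eq}$ (over a larger parameter set, or as a quotient) to an honest $\CN$-definable subset of $N$ with the same nice induced structure, while keeping the inductive bookkeeping on $\dim_{\CM}$ and $\uth$-rank consistent. This is where the bulk of the work — and the use of the o-minimal Trichotomy Theorem of \cite{peterzil-starchenko} together with Shelah's structure theory for dependent theories — should be concentrated, and it is why the paper isolates Theorems~1 and~2 as the key technical inputs.
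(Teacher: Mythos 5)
Your skeleton for the first part (induct on $\dim_{\CM}N$, reduce to the $1$-dimensional case via the dimension-reduction theorem) is indeed the paper's strategy, but the inductive step as you state it has a genuine gap. Theorem~2 (Theorem \ref{dimension down}) is not about arbitrary $\CN$: its proof begins by invoking Shelah's theorem to get a definable quasi order with infinite chains, so it needs $N$ \emph{unstable}, and without that hypothesis the statement is simply false --- $(\mathbb{R}^2,+)$, viewed as a pure group definable in the real field, is $\CN$-minimal in the paper's sense (every non-algebraic definable subset has full dimension) yet $2$-dimensional. So the move ``if $\CN$-minimal then $\dim_{\CM}N=1$, contradiction'' does not get off the ground unless you first split into cases: if some $\CN$-definable subset of $N$ is stable you are done by the second alternative; otherwise $N$ is purely unstable, and then every definable subset produced along the way is again unstable, which is exactly what allows the paper to iterate the dimension reduction down to a $1$-dimensional unstable set and conclude with Theorem \ref{theorem}. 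You also conflate the paper's notion of $\CN$-minimality with ordinary minimality (finite/cofinite): failure of the latter only yields a proper infinite co-infinite subset, which need not have smaller $\CM$-dimension. Finally, the descent-from-$\CN^{eq}$ issue you identify as the main obstacle is a non-issue for this theorem: the reduction produces an honest $\CN$-definable subset of $N$, and $\CN^{eq}$ only becomes unavoidable for Theorem \ref{interpreting}.

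The ``moreover'' clause is where the real work lies, and your sketch does not meet it. An induction on $\uth(q)$ as described cannot start: no formula in $q$ can have smaller $\uth$-rank than $q$, and $\uth$-rank $1$ does not place $q$ on a $1$-dimensional set, so Theorem \ref{theorem} does not apply directly; moreover Remark \ref{wstable} shows that a weakly unstable type need not contain any finite by o-minimal formula, so one really must pass to a suitable extension. The actual difficulty is not ``lifting back along a forking chain'' but producing a lower-dimensional definable set \emph{consistent with the given type} $p$. The paper (Corollary \ref{localmain}) inducts on $\mdim p$: it runs the Section \ref{dimdown} construction uniformly inside each $N_\phi$ for $\phi\in p$ (checking that $\pot$ is given by a single formula and that partition sizes are uniformly bounded in an infinitesimal neighbourhood of a realization of $p$), and then uses compactness together with definability of o-minimal dimension to find a single parameter $c$ with $0<\dim\left(\psi(x,c)\wedge\phi(x)\right)<n$ for all $\phi\in p$, so that $\psi(x,c)\cup p$ is consistent and the induction on dimension can continue. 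This uniformity/consistency-with-$p$ step is absent from your proposal and is precisely the content that makes the second half of Theorem \ref{mainintro} true.
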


The notion of strongly stable types requires explanation:

\begin{definition}
Let $T$ be a dependent theory, $\CN\models T$. A type $p\in S(N)$
is \emph{strongly stable} if there are no $p'\supseteq p$ and
formula $\phi(x,y)$ defining a quasi order with infinite chains in
$p'$.
\end{definition}

Strongly stable types are stable according to Shelah's definition
of stable types in a dependent theory (see \S 1 of
\cite{shelah715}), but the definitions are easily seen not to be
equivalent (see the example concluding Section 2). Types which are not strongly stable will be called
\emph{weakly unstable}.

\medskip 

On the global level we can strengthen the result of Theorem \ref{mainintro}:

\begin{theorem}\label{interpreting}
Let $\CM:=(M,<,\dots)$ be an o-minimal structure with a  dense underlying order and 
elimination of imaginaries. It $\CN$ is unstable, 
interpretable in $\CM$, then $\CN$ interprets an o-minimal structure.
\end{theorem}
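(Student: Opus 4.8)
The plan is as follows.

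\textbf{Reductions.} Since $\CM$ eliminates imaginaries, $\CN$, the structure $\CN^{eq}$, and the structure induced by $\CN$ on any $\CN^{eq}$-definable set are all isomorphic to structures whose universe and atomic relations are $\CM$-definable; hence Theorem~\ref{mainintro} and the two dimension theorems announced in the abstract apply to each of them. As ``interpreting an o-minimal structure'' depends only on $\Th(\CN)$, we may work inside a monster model. Finally, unwinding the definition of ``finite by o-minimal'' it will be enough to produce an $\CN^{eq}$-definable set $X$, an $\CN^{eq}$-definable finite equivalence relation $E$ on $X$ and an $\CN^{eq}$-definable linear order of $X/E$ making $(X/E,<)$ o-minimal for the induced structure; that quotient is the required structure interpreted in $\CN$.

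\textbf{Main line.} Because $\CN$ is unstable it carries a formula with the order property; reducing the parameter side in the standard way and then replacing each of the two coordinate sorts by its quotient modulo $\phi$-equivalence, I would pass to an $\CN^{eq}$-definable set $D$ — the disjoint union of the two quotients — whose induced structure $\CN_D$ is still unstable, the order property being witnessed by a configuration inside $D$ and $\phi(x,y)$ being, now, a formula in two single variables of $D$. The formula $\psi(x,x'):=\forall y\,(\phi(x',y)\to\phi(x,y))$ is a definable quasi-order on $D$ along which the first coordinates of the configuration form an infinite chain. Fixing a small $D_0\subset D$, I would take $p\in S_1(D_0)$ to be a non-algebraic completion of the $\psi$-cut determined by this chain, and then invoke the ``moreover'' clause of Theorem~\ref{mainintro} for $\CN_D$ to obtain a non-algebraic $q\in S_1(D)$ extending $p$ which is strongly stable or finite by o-minimal. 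One then argues that $q$ cannot be strongly stable — the key being that a $\psi$-cut type always has an extension realised by an infinite $\psi$-chain, since near such a cut $\psi$ behaves enough like a linear order that two realisations of the cut over any parameter set can be placed one $\psi$-below the other with no parameter strictly between them. Hence $q$ is finite by o-minimal, so it has a non-algebraic extension containing a finite-by-o-minimal formula, and the set defined by that formula — with its finite equivalence relation and the order on its quotient — is $\CN_D^{eq}$-definable, hence interpreted in $\CN$, and o-minimal; this is what we wanted.

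\textbf{Where the difficulty lies.} The delicate part is the claim that every non-algebraic extension of the cut type $p$ is weakly unstable, i.e.\ that the instability of $\CN_D$ is genuinely concentrated in the single $1$-type $p$ over the small set $D_0$, so that Theorem~\ref{mainintro}'s dichotomy is forced onto the finite-by-o-minimal side. For a linear order this is elementary, but for an arbitrary $\CN_D$ one must first see that near the cut the quasi-order $\psi$ is sufficiently order-like, and this is precisely where the dimension analysis is needed: Theorem~2 of the abstract ensures that the section one ends up with is genuinely one $\CM$-dimensional, and the arguments underlying Theorem~1 — that a $1$-type at a cut in a one-dimensional structure definable in $\CM$ is finite by o-minimal unless it is strongly stable — are what let $\psi$ be treated as a local linear order. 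I expect this localisation, together with the bookkeeping required to bring the order-property configuration down to a small parameter set (via saturation, the choice of $\mathbb{Q}$ as index order, and possibly one further $\CN^{eq}$-reduction), to be the technical heart of the proof; granted it, the step from ``finite by o-minimal type'' to ``interpreted o-minimal structure'' is immediate.
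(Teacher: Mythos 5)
The gap is in the step you yourself flag as the ``technical heart'': the claim that the cut type $p$ can be chosen so that \emph{every} non-algebraic extension of it is weakly unstable, so that the dichotomy of the ``moreover'' clause of Theorem \ref{mainintro} is forced onto the finite-by-o-minimal side. For a genuine linear order this is elementary, but for the quasi-order $\psi$ you produce it is false in general, and one round of quotienting by $\phi$-equivalence does not repair it. Concretely, let $D=\mathbb{R}^2$ with the definable partial order $a\le b$ iff $a=b$, or $a_2=0$ and $a_1\le b_1$ (semialgebraic, hence definable in an o-minimal structure, and unstable). All infinite chains lie on the line $x_2=0$, but the upper cone of the cut determined by a chain also contains the two-dimensional ``cloud'' of off-line points, which are pairwise incomparable and whose fibres over a fixed first coordinate carry (essentially) the structure of a pure infinite set. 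A completion $p$ of the cut which places $x$ off the line has a non-algebraic extension $q$ over the big model pinning down the first coordinate, and such a $q$ is strongly stable; the dichotomy then returns the useless horn. (Note also that weak instability of $p$ itself does not help: the extension $q$ handed to you may still be strongly stable.) In this toy example the ``good'' completion — the one concentrated on the line — happens to be visible, but nothing in your construction of $D$, $\psi$, $D_0$ and $p$ lets you identify such a completion in general, and your appeal to Theorem 2 of the abstract is not available: that theorem concerns $\CN$-minimal sets, and $D$ has no reason to be $\CN_D$-minimal. This is exactly the obstruction the paper signals when it says Theorem \ref{interpreting} has no immediate local analogue and illustrates with $(\mathbb{R}^2,\prec)$: instability may be invisible to any single localisation, so one cannot force the finite-by-o-minimal conclusion type by type.

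The paper's proof takes a different, global route which your sketch has no substitute for. Among all unstable $\CN^{eq}$-definable sets (this is where elimination of imaginaries in $\CM$ is used, to regard them as $\CM$-definable) one minimises the pair consisting of the \th-rank and the o-minimal dimension, lexicographically. Theorem \ref{dimension down} then produces, inside any unstable set of dimension $>1$, a lower-dimensional $\CN$-definable subset containing a non-algebraic point $a$ with $\ur(\tp(a))=\infty$; the crucial extra step is Theorem \ref{unstable}, whose \th-forking/U-rank argument shows that when the ambient set has minimal \th-rank, such a subset is automatically unstable — this is precisely the statement ``the smaller-dimensional set can be taken unstable'' that your plan needs and does not supply (and which, as the example above shows, cannot be had for free). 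Minimality then forces the minimising set to have dimension one, and Corollary \ref{last corollary} (via Theorem \ref{theorem}) produces the interpreted o-minimal structure. By contrast, your argument leans on Corollary \ref{localmain}, which the paper only sketches and does not use for Theorem \ref{interpreting}, and even granting it, the localisation step it rests on is unjustified; as it stands the proposal does not prove the theorem.
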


These results, together with the more accurate local statement of
the 1-dimensional case (Theorem \ref{theorem}) and the reduction
to it (Section 4, Claim \ref{ominimal}) give rise to the hope that
a structural analysis of types in terms of o-minimal and minimal
stable types could be achieved. An obvious obstacle on the way of such a program is the fact that Theorem \ref{interpreting} does not have an immediate local analogue (strengthening the second part of Theorem \ref{mainintro}), as shows an easy example in Section 3. It seems that, in order to formulate (and prove) a correct analogue of that theorem, machinery such as theories of domination, analysability, stable domination and weight may have to be developed. It is not quite clear to us what is the right context for such a project. Recent work of Shelah in \cite{Sh900} 
suggests some directions (in the significantly more general setting
of dependent theories) which may be of relevance to the present
project.

\bigskip
\noindent The structure of this paper is as follows. In Section
\ref{dimension1} we show that given any 1-dimensional partially
ordered set $(N,\po)$ definable in an o-minimal structure $\CM$
there is a $\po$-definable finite by o-minimal $X\subseteq N$.

The strategy of the proof of Theorem \ref{interpreting} will be to
inductively reduce the problem to the 1-dimensional case. In
Section \ref{dimdown} we perform the first part of the induction
step, showing that if $\CN$ is any structure definable in an
o-minimal $\CM$, then $N$ is $\CN$-minimal only if it is either
strongly minimal or 1-dimensional. In other words if $\CN$ is
unstable and $\dim N >1$ there exists an $\CN$-definable
$X\subseteq N$ with $\dim X < \dim N$.

The induction is completed in section \ref{interpretingsec}, where
Theorem \ref{interpreting} is proved using the machinery of
\th-forking. The proof consists of showing that, assuming
elimination of imaginaries in the ambient o-minimal structure, we
can actually find a lower dimensional unstable set.

\subsection{Dependent theories (or theories with NIP)}

We assume the reader has certain familiarity with basic notions of
model theory, o-minimality and geometric structures. We list some known facts which will be used repeatedly
throughout the paper and refer to \cite{vandendries} and \cite{pillaybook} for the necessary background in o-minimality and stability respectively. 

\medskip

\noindent We begin with:

\begin{definition}
Let $T$ be any theory and $\mathcal C\models T$ a monster model.

\begin{enumerate}
\item a formula $\phi(x,y)$ has the \emph{order property} if there
are indiscernible sequences $\langle a_i\rangle_{i\in \omega}$ and
$\langle b_i\rangle_{i\in \omega}$ such that $\mathcal C\models
\phi(a_i, b_j)$ if and only if $i<j$.

\item A formula $\phi(x,y)$ has the \emph{strict order property}
if there is an indiscernible sequences $\langle a_i\rangle_{i\in
\omega}$ such that $\mathcal C\models \exists y\ \phi(y,
a_i)\wedge \neg \phi(y,a_j)$ if and only if $i<j$.

\item A formula $\phi(x,y)$ has the \emph{independence property}
if there is an indiscernible sequences $\langle a_i\rangle_{i\in
\omega}$ such that for any finite disjoint sets $I$ and $J$ there
is some $c$ such that $\mathcal C\models \phi(c, a_i)$ for any
$i\in I$ and $\mathcal{C} \models \neg \phi(y,a_j)$ whenever $j\in
J$.

\item A theory $T$ is \emph{dependent}(equivalently, does not have
the independence property, or has the non-independence property)
if no formula has the independence property.

\item A definable set $\theta(x)$ is \emph{stable} if there is no
$\phi(x,y)\in \CC$ such that $\phi(x,y)\land \theta(x)$ has the
order property.

\end{enumerate}
\end{definition}

The following theorem is a rehash of results from \cite{shelah715}
and  \cite{shelahbook} presented more conveniently for our needs
in \cite{onshuus-peterzil}.

\begin{theorem}\label{stable}
Let $X=X(\mathcal C)$ be a set interpretable in a dependent
theory. The following are equivalent:

\begin{itemize}
\item $X:=\theta(x)$ is an unstable set.

\item There exists a formula $\phi(x,y)$ such that $\phi(x,y)\wedge
\theta(x)$ has the order property.

\item There are sequences $\langle a_i\rangle_{i\in \omega}$ and
$\langle b_i\rangle_{i\in \omega}$ such that $a_i\in X(\mathcal
C)$ for all $i$, $b_j\in \mathcal C$ for all $j$ and $\mathcal
C\models \phi(a_i, b_j)$ if and only if $i<j$.

\item There is a $\mathcal C$-definable partial order on
$X(\mathcal C)$ with infinite chains.
\end{itemize}

\end{theorem}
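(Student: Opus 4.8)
The plan is to establish $(1)\Leftrightarrow(2)$, $(2)\Leftrightarrow(3)$, $(4)\Rightarrow(3)$, and $(2)\Rightarrow(4)$; the first three are soft and only the last uses dependence.

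$(1)\Leftrightarrow(2)$ is just the definition of an unstable set. For $(2)\Rightarrow(3)$: indiscernible sequences $\langle a_i\rangle$, $\langle b_i\rangle$ witnessing the order property for $\phi(x,y)\wedge\theta(x)$ satisfy $\CC\models\theta(a_i)$ for every $i$, so $a_i\in X(\CC)$, and $\CC\models\phi(a_i,b_j)\leftrightarrow i<j$; dropping indiscernibility gives $(3)$. For $(3)\Rightarrow(2)$: from $\langle a_i\rangle_{i<\omega}$ in $X(\CC)$ and $\langle b_i\rangle_{i<\omega}$ in $\CC$ with $\CC\models\phi(a_i,b_j)\leftrightarrow i<j$, apply Ramsey's theorem and compactness to the sequence of pairs $\langle(a_i,b_i)\rangle_{i<\omega}$ to extract an indiscernible sequence realising the same Ehrenfeucht--Mostowski type; the formula $\theta$ applied to first coordinates and the pattern ``$\phi$ holds on a pair precisely when the first index is the smaller one'' belong to that EM-type, so the extracted sequences witness the order property for $\phi(x,y)\wedge\theta(x)$. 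For $(4)\Rightarrow(3)$: an infinite chain $a_0<a_1<\cdots$ for a $\CC$-definable partial order $<$ on $X(\CC)$ gives $(3)$ at once, with $\phi(x,y):=x<y$ and $b_j:=a_j$.

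The implication carrying real content is $(2)\Rightarrow(4)$, and it is here, and only here, that dependence is used: the statement is the local version of Shelah's theorem that a dependent unstable theory has the strict order property, and it is exactly what \cite{onshuus-peterzil} distils from \cite{shelah715} and \cite{shelahbook}. The heart of the argument is to produce, from a formula $\phi$ with $\phi\wedge\theta$ having the order property, a $\CC$-definable preorder $\preceq$ on $X(\CC)$ admitting an infinite strictly increasing chain. Starting from an indiscernible witness $\langle a_i\rangle_{i\in\mathbb Q}$ in $X(\CC)$ and $\langle b_j\rangle_{j\in\mathbb Q}$ with $\CC\models\phi(a_i,b_j)\leftrightarrow i<j$, one replaces $\phi$ by a suitable Boolean combination of its instances whose fibres meet (a thinning of) the $a$-sequence in initial segments, and lets $\preceq$ be inclusion of these fibres. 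Dependence --- concretely, the bound, valid in dependent theories, on the number of alternations of $\phi(a_i,c)$ as $i$ runs over an indiscernible sequence --- is precisely what makes this normalisation possible and rules out the ``independence'' configuration in which the fibres of a subsequence of the $a_i$ would be pairwise $\subseteq$-incomparable. Given such a preorder, setting
\[
 a\le^* a'\ :\Longleftrightarrow\ a=a'\ \vee\ \bigl(a\preceq a'\ \wedge\ \neg(a'\preceq a)\bigr)
\]
defines a $\CC$-definable partial order on $X(\CC)$ along which the $a_i$ form an infinite chain, which is $(4)$.

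The main obstacle is thus the production of this definable preorder inside $(2)\Rightarrow(4)$: turning a formula with the order property on $X$ into one whose fibres cut the witnessing sequence into initial segments, so that set inclusion of the fibres becomes a definable comparison. This is a genuine dependence phenomenon --- it fails outright in unstable theories with the independence property --- and is the substance of the cited Shelah material; the remaining implications are routine applications of Ramsey, compactness, and the definitions.
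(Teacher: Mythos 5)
Your proposal is correct and follows essentially the route the paper relies on: the paper gives no proof of this theorem at all, presenting it as a rehash of results from Shelah as distilled in Onshuus--Peterzil, and your treatment --- the routine Ramsey/compactness arguments for $(1)\Leftrightarrow(2)\Leftrightarrow(3)$ and $(4)\Rightarrow(3)$, together with the NIP bounded-alternation normalization that turns an order-property witness into a definable fibre-inclusion quasi-order (and then a partial order via the strict part plus equality) with an infinite chain inside $X$ for $(2)\Rightarrow(4)$ --- is exactly the standard argument those references contain. The only caveat is that $(2)\Rightarrow(4)$ is sketched rather than carried out in full, but this matches the paper's own level of detail, which is a citation.
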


Given a dependent structure $\CN$ we will say that an $\CN$-definable
set $X$ is \emph{stable} if it satisfies any of the above
conditions.

\bigskip

As an easy corollary of Theorem \ref{stable} we have the following
theorem which is the technical basis of this paper.

\begin{theorem}[Shelah]\label{shelah}
Let $\CN$ be any structure interpretable in an o-minimal theory
$T$. Then for any $\CN$-definable unstable $X\subseteq N^k$ there is an
$\CN$-definable partial quasi order $\po$ on $X$ with infinite
chains.
\end{theorem}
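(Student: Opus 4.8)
The plan is to derive Theorem~\ref{shelah} from Theorem~\ref{stable} together with the special properties of o-minimal structures, namely the fact that o-minimal theories have the strict order property but \emph{not} the independence property --- in particular, the theory $T$ in which $\CN$ is interpretable is dependent, and so is $\Th(\CN)$, since dependence is preserved under interpretation. Thus Theorem~\ref{stable} applies to $\CN$: if $X\subseteq N^k$ is $\CN$-definable and unstable, then by the last clause of Theorem~\ref{stable} there is an $\CN$-definable partial order on $X(\mathcal C)$ with infinite chains. The only gap between this and the statement of Theorem~\ref{shelah} is the word ``quasi'': Theorem~\ref{stable} hands us a genuine partial order (antisymmetric), and we are asked to produce a partial quasi order (a reflexive, transitive relation, not necessarily antisymmetric). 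But every partial order is in particular a partial quasi order, so formally the implication is immediate --- the point of phrasing the conclusion with ``quasi order'' is presumably to match the hypothesis of the $1$-dimensional analysis in Section~\ref{dimension1}, where one wants the extra flexibility of allowing ties. Hence one direction is essentially a quotation of the previous theorem.

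First I would spell out explicitly why $\Th(\CN)$ is dependent. If $\CN$ is interpretable in $\CM\models T$ with $T$ o-minimal, then $T$ is dependent (o-minimal theories have NIP --- this is classical, e.g. via the monotonicity/cell-decomposition machinery of \cite{vandendries}), and any structure interpretable in a dependent theory is again dependent, because an instance of the independence property in $\CN$ would pull back, through the interpretation, to an instance of the independence property in $\CM$ (the relevant formula and the relevant indiscernible sequence are coded by $\CM$-definable data). With $\Th(\CN)$ dependent in hand, I would pass to a monster model $\mathcal C$ of $\Th(\CN)$ and invoke Theorem~\ref{stable} applied to the $\CN$-definable set $X$: since $X$ is unstable (by hypothesis) the fourth bullet of Theorem~\ref{stable} yields an $\mathcal C$-definable partial order $\le$ on $X(\mathcal C)$ with infinite chains. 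Because $\le$ is defined by a formula with parameters from $\mathcal C$ but its restriction to the $\CN$-definable set $X$ is an $\CN$-definable notion after we absorb those parameters (or, more precisely, because Theorem~\ref{stable} is about interpretable-in-$\mathcal C$ data and descends to $\CN$-definable data by the usual compactness argument), we obtain an $\CN$-definable such order.

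The last step is purely bookkeeping: a partial order is reflexive, antisymmetric and transitive, hence a fortiori reflexive and transitive, i.e. a partial quasi order; and ``infinite chains'' is the same requirement in both settings (a totally ordered infinite subset). So the order produced by Theorem~\ref{stable} already witnesses the conclusion of Theorem~\ref{shelah} verbatim. I would remark that one could equally run the argument with the third bullet of Theorem~\ref{stable}, obtaining sequences $\langle a_i\rangle$ in $X$ and $\langle b_j\rangle$ in $\mathcal C$ with $\mathcal C\models\phi(a_i,b_j)\iff i<j$, and then define $a\po a'$ iff $\forall y\,(\phi(a',y)\to\phi(a,y))$, checking this is an $\CN$-definable quasi order on $X$ in which $\langle a_i\rangle$ is an infinite descending (or ascending, after reversing) chain --- this is the standard way the strict order property produces quasi orders, and it makes the ``quasi'' in the statement natural, since antisymmetry can genuinely fail for this relation.

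The main obstacle, such as it is, is not a mathematical difficulty but a matter of being careful about where parameters live and in which structure the relevant definability is asserted: one must confirm that the partial (quasi) order supplied by Theorem~\ref{stable}, which a priori is merely $\mathcal C$-definable, can be taken $\CN$-definable --- this is exactly the content of the sentence immediately following Theorem~\ref{stable} in the excerpt (``we will say that an $\CN$-definable set $X$ is stable if\dots''), so in fact the definability over $\CN$ is built into the way Theorem~\ref{stable} has been set up, and there is nothing further to prove. Everything else is a direct appeal to NIP being inherited under interpretation plus the trivial observation that partial orders are partial quasi orders.
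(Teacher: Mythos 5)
Your proposal is correct and follows exactly the paper's own (very short) argument: $\CN$ inherits NIP from the ambient o-minimal theory since dependence is preserved under interpretation, and then the last clause of Theorem~\ref{stable} applied to the unstable set $X$ yields the $\CN$-definable (quasi) order with infinite chains, a partial order being in particular a quasi order. Your extra bookkeeping about parameters and the alternative construction via $\forall y\,(\phi(a',y)\to\phi(a,y))$ are fine but not needed beyond what the paper records.
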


\begin{proof}
Since $\CN$ is interpretable in an o-minimal theory it does not
have the independence property, and neither does $X$ with all the
induced structure. The result now follows from Theorem \ref{stable}. 
\end{proof}

We conclude with some notational conventions that will be kept
throughout the paper. $\CM:=(M,<,\dots)$ will always denote an
o-minimal structure with a dense underlying order. $\CN$ will be a
structure definable in $\CM$ (in other words, given a structure
$\CN$ definable in some o-minimal structure, we fix such a
structure $\CM$ and an interpretation of $\CN$ therein).
Given any $\CN$-definable $X\subseteq N$ the dimension of $X$,
denoted $\dim(X)$, is the o-minimal dimension of $X$ as an $\CM$-definable set.

\section{The 1-dimensional case}\label{dimension1}

\begin{theorem}\label{theorem}
Let $\mathcal M :=(M,<,\dots)$ be a dense o-minimal structure,
$X\subset M$ an infinite definable set and $\po$ an
$\CM$-definable order with infinite chains on $X$. Let $p\in
S_1(M)$ be a type extending $x\in X$ with infinite
$\lneq_p$-chains. Then for any $e\models p$ there are
$\po$-definable infinite $X' \subseteq X$ with $e\in X'$ generic
and a linear order $<'$ on $X'$ such that $(X',<')\equiv (X',<)$.
\end{theorem}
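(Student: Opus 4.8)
The plan is to argue locally near $e$ and use the o-minimality of $\mathcal M$ to linearise $\leq_p$ on a small $\leq_p$-definable neighbourhood of $e$.

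First I would fix $e\models p$. Using that $p$ has infinite $\lneq_p$-chains, I choose $a\lneq_p e\lneq_p b$ lying on such a chain, chosen generically enough that $e$ is still generic over $\{a,b\}$. Then $Y:=\{y\in X: a\lneq_p y\lneq_p b\}$ is $\leq_p$-definable over $\{a,b\}$, contains $e$, and, since $e$ is generic in it, by o-minimality $Y$ contains an ambient open interval $J$ with $e\in J$. (If the chain through $e$ accumulates at $e$ only from one side, $J$ will be half-open with $e$ an endpoint and the rest of the argument is adapted accordingly; the hypothesis that $p$ carries infinite chains is what guarantees such a $J$ exists at all, and keeping $e$ generic over $\{a,b\}$ uses genericity of the chosen chain.)

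Next, for $y\in J$ I look at the comparability set $G(y):=\{z\in X: z\leq_p y \text{ or } y\leq_p z\}$ together with its interior $G^\circ(y)$ and closure $\overline{G^\circ(y)}$, all $\mathcal M$-definable; shrinking $J$ I may assume, by cell decomposition and the monotonicity theorem, that these sets, and the way $\leq_p$ splits each into an ``up-part'' and a ``down-part'' of $y$, vary continuously and with fixed combinatorial type as $y$ ranges over $J$. The heart of the proof is to combine transitivity of $\leq_p$ with o-minimal monotonicity to show that this data is controlled by finitely many continuous, monotone ``endpoint'' functions on $J$, and that the resulting local configuration falls into one of a short list of normal forms. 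From each normal form I read off an $\leq_p$-definable linear order $<'$ on an ambient interval $X'\subseteq J$ around $e$: in the configurations where $\leq_p$ is already total near $e$ one takes $<'$ to be $\leq_p$ restricted to $X'$ or its reverse, while in the remaining cases $<'$ compares cones, say $y<' z$ iff $\overline{G^\circ(y)}\subsetneq\overline{G^\circ(z)}$ or an appropriate one-sided variant, which is again definable from $\leq_p$. (If $\leq_p$ turns out to be only a quasi-order one first passes to a $\leq_p$-definable set of representatives, which near the generic point $e$ exists by definable choice, and then argues as above.)

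Finally, choosing $X'$ to be an ambient open interval around $e$, the ordered set $(X',<)$ is a dense linear order without endpoints, and $<'$ — being an $\mathcal M$-definable linear order which, up to a continuous monotone $\mathcal M$-definable identification, corresponds to an open interval of $M$ — makes $(X',<')$ such an order as well; hence $(X',<')\equiv(X',<)$, and $e$ is generic in $X'$ because $X'$ is an interval around $e$ with endpoints in a parameter set over which $e$ was chosen generic. I expect the main obstacle to be exactly the transitivity analysis in the third paragraph: bounding how the cones $G^\circ(y)$ can overlap as $y$ moves, ruling out the non-linearisable configurations, and — the more delicate point — carrying this out while keeping the resulting order definable from $\leq_p$ alone rather than merely from the full o-minimal structure $\mathcal M$.
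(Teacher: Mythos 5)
Your outline follows the paper's general strategy at a high level (local analysis near $e$, cell decomposition with monotone continuous endpoint functions for the cones, an order obtained by comparing cones), but it misses the point on which the theorem actually turns: the conclusion requires $X'$ (and, for the way the theorem is later used, $<'$ as well) to be definable in the reduct $(X,\po)$, not merely in $\CM$. In your final paragraph you take $X'$ to be ``an ambient open interval around $e$''; such an interval is $\CM$-definable but in general not $\po$-definable, and your candidate order $y<'z$ iff $\overline{G^\circ(y)}\subsetneq\overline{G^\circ(z)}$ is built from interiors and closures, which are operations of $\CM$, not of $(X,\po)$. You flag this definability issue as ``the more delicate point'' but supply no mechanism for it, and it is exactly where the paper does its real work: (i) $\po$ is first refined to $a\poo b$ iff $G(b)\setminus G(a)$ is finite, which is $\po$-definable because o-minimality uniformly bounds the finite differences, and which makes the cones relatively closed on the cells (Fact \ref{assuming closed G}); (ii) the linearising relation is $x<'y$ iff $G(a)\cap G(x)\supseteq G(a)\cap G(y)$ for a fixed generic parameter $a$ --- a purely $\po$-definable comparison which the monotonicity analysis of Claim \ref{J1J2} shows agrees with $<$ on a small $\CM$-interval; and (iii) Lemma \ref{from finite to total} then converts ``linear on an $\CM$-interval'' into ``linear on an infinite $\po$-definable set with $e$ in its interior'' by an induction that prunes a $\po$-definable neighbourhood of $e$, using only $\po$-intervals $(a,b)_{\po}$ and the functions $m(x)=\sup(L(x)\cap X_0)$, $u(x)=\inf(G(x)\cap X_0)$, until it meets the remaining cells only finitely. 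Nothing in your proposal replaces step (iii); without it, what you actually establish (an $\CM$-definable interval around $e$ carrying a dense linear order) is essentially vacuous.

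A secondary gap is the opening move: you ``choose $a\lneq_p e\lneq_p b$ \dots\ so that $e$ is still generic over $\{a,b\}$.'' The hypothesis only gives infinite $\lneq_p$-chains among realizations of $p$; it does not guarantee comparable elements on both sides of $e$, nor that any comparable elements can be chosen with $e$ generic over them --- all elements of a chain through $e$ may well be algebraic over $e$, in which case $(a,b)_{\po}$ is defined over parameters over which $e$ is no longer generic. This matters because the requirement ``$e\in X'$ generic'' forces $X'$ to be defined over parameters from which $e$ is generic. The paper avoids this trap: it never needs comparable points generic from $e$, and it secures genericity by observing that the sets $X'_e$ are instances of finitely many formulas $\psi_i(x,z)$ with side parameters independent of $e$, so the linearisation statement holds uniformly for all but finitely many (hence for all generic) elements of the relevant cell.
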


Some conventions regarding terminology are in place. As in the
statement of the theorem, a set definable in $(X,\po)$ will be
called $\po$-definable (or $X$-definable). The term ``definable
set'' will always refer to $\CM$-definable sets. All orders will
be partial, unless explicitly stated. Hence a quasi order is a
transitive binary relation $\leq$. A quasi order is said to have
infinite chains if the corresponding order (obtained after
quotienting by the equivalence relation $a\leq b \and b \leq a$)
does. Throughout the text all (quasi) orders will be assumed to
have infinite chains. We will also assume, without loss of
generality, that $\CM$ is saturated enough (so that every
$\CM$-definable set has a generic point in $\CM$).

For $x\in X$ it will be convenient to denote $G(x):= \{y\mid x\po
y\}$ and $L(x):=\{y\mid y\po x\}$.

The first part of the proof is to decompose $X$ into well behaved
cells and redefine the partial order to obtain topologically nice
upper and lower cones. Getting the right decomposition of $X$ and
the right partial order to work with are the
main parts of the proof.

\subsection{                                  Taming $(X,\po)$}

By o-minimality, for every $a\in X$, we can write $G(a) =
\bigcup_{i=1}^k I_i$ where each $I_i$ is a definably connected
component of $G(a)$ and $I_i \le I_j \iff i\le j$.  Let $f_j^1(a)$
and $f_j^2(a)$ be the left and right endpoints of $I_j(a)$ (which
coincide if $I_j(a)$ is a point). By allowing empty intervals, we
may assume that $k$ does not depend on $a$ (since the number of connected definably components of $G(a)$  is uniformly
bounded).

Let $X_1, \dots , X_n$ be a decomposition of $X$ such that all the
$f^i_j$ are continuous on each $X_s$ and such that each $f^i_j$ is
either non-increasing or non-decreasing in $X_k$ for all $k$.
With this decomposition of $X$ we redefine the intervals $I_j(a)$
to make sure that $I_j(a)$ is entirely contained in some $X_i$ for
all $a$ and $i$. This can be done as follows: for each $a\in X$
and $i\le k$ define $I_j^i(a) = I_j(a)\cap X_i$. The functions
that define our new intervals will be (weakly) monotone and continuous.

In order to formalise this (and since we will repeat the same
process over and over again in this section), we need the
following definitions.

\begin{definition}\label{MonRep}
Let $\CM$ be an o-minimal structure, $X\subseteq M$ an infinite
definable subset and $\po$ an $\CM$-definable relation inducing a
partial order on $X$. Denote $\CM_{\infty}$ the natural expansion
of $\CM$ to $M\cup \{\pm \infty\}$. A monotone representation of
$(X,\po)$ is a decomposition $X = \bigcup_{i=1}^l X_i$ into
disjoint $\CM$-definable intervals and points, and two finite
collections of definable functions
\[
 \mathcal F^G: \{f_j^i\mid X\to \cl_{\CM_{\infty}}(X)\left| \right. 1\le j \le n, i\in \{1,2\}\}
\]
 and
\[
\CF^L : \{h_j^{i}\mid X\to \cl_{\CM_\infty}(X)\left| \right. 1\le
j \le n, i\in \{1,2\}\}
\]
with
\[ G(x) = \bigcup_{f_j^i\in \CF^G} (f_j^1(x), f_j^2(x))\] and

\[ L(x) = \bigcup_{h_j^{i}\in \CF^L} (h_j^{1}(x), h_j^{2}(x))\] for all $x\in
X$ and such that:

\begin{enumerate}
   \item For every $l\le k, j\le n$ and $i \in \{1,2\}$ the function $f_j^i|_{X_k}$ is continuous and weakly monotone.
    \item For every $1\le j < n$ we have $f_j^1 \le f_j^2 \le f_{j+1}^1$.
    \item For all $x\in X$ and all $j$ there exists $r$ such that $(f_j^1(x), f_j^2(x))\subseteq X_r$.
\end{enumerate} and analogous conditions (1'), (2') and (3') for  $\CF^L$.
\end{definition}

\begin{claim}\label{monrep}
Let $\CM$ be an o-minimal structure. Then given any 1-dimensional
$\CM$-definable partial order $(X,\po)$ and any decomposition
$\{X_i\}$ of $X$ there is a monotone representation
$\{X_i',\CF^G,\CF^L\}$ of $(X,\po)$ such that $\{X_i'\}$ refines
$\{X_i\}$.
\end{claim}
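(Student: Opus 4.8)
The plan is to make precise the informal recipe given just above the statement: describe $G(x)$ and $L(x)$ by finitely many ``endpoint functions'', pass to a decomposition on which those functions are piecewise continuous and weakly monotone, and then intersect the connected components of $G(x)$ and $L(x)$ with the pieces of that decomposition.

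First, I would extract the endpoint functions. Since $\CM$ is o-minimal, $\{G(x):x\in X\}$ is a definable family of subsets of $M$, so by uniform finiteness there is $n$ bounding the number of definably connected components of $G(x)$ for all $x$; enumerating these components in increasing order, and allowing degenerate (possibly empty) intervals so that the count is exactly $n$ for every $x$, I obtain definable functions $g^1_j,g^2_j\colon X\to\cl_{\CM_\infty}(X)$ ($1\le j\le n$) with $g^1_j\le g^2_j\le g^1_{j+1}$ and $G(x)=\bigcup_j(g^1_j(x),g^2_j(x))$ (isolated point-components being recorded by the convention $g^1_j(x)=g^2_j(x)$; the one place this calls for extra care is with endpoints of $X$ that lie in $X$, which is routine). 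Symmetrically I obtain $h^1_j,h^2_j$ describing $L(x)$. Then I would apply the Monotonicity Theorem together with cell decomposition in one variable, simultaneously to the finite family $\{g^i_j\}\cup\{h^i_j\}$ and refining the given $\{X_i\}$, to get a decomposition $\{X'_1<\dots<X'_l\}$ of $X$, refining $\{X_i\}$, on each piece of which every $g^i_j$ and every $h^i_j$ is continuous and weakly monotone.

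Next I would cut and reindex. For each pair $(g^1_j,g^2_j)$ and each piece $X'_r$, set $a_r=\inf X'_r$, $b_r=\sup X'_r$, and introduce the two functions $x\mapsto\max(g^1_j(x),a_r)$ and $x\mapsto\min(g^2_j(x),b_r)$, whose associated interval is exactly $(g^1_j(x),g^2_j(x))\cap X'_r$; collecting all of these over $j$ and $r$ into $\CF^G$, reindexed so that for every $x$ the intervals occur in increasing order (possible since for fixed $x$ the intervals $I_j(x)\cap X'_r$ are pairwise disjoint and sorted lexicographically in $(j,r)$, with degenerate ones slotted in consistently), and doing the same with the $h$'s to form $\CF^L$, one obtains the candidate monotone representation. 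Condition (3)/(3') is then automatic, as each new interval is contained in a single $X'_r$; the union of the new intervals over all $(j,r)$ is still $\bigcup_j\bigcup_r\bigl(I_j(x)\cap X'_r\bigr)=G(x)$, resp.\ $L(x)$; condition (2)/(2') is exactly the increasing reindexing; and condition (1)/(1') is preserved because, on each $X'_s$, the maximum (resp.\ minimum) of a continuous weakly monotone function with a constant is again continuous and weakly monotone, and lands in $\cl_{\CM_\infty}(X)$ since such a maximum or minimum of two of its elements is one of them. As $\{X'_i\}$ refines $\{X_i\}$, this gives the claim.

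The point I would handle most carefully — and which I expect to be the real obstacle — is the simultaneous fulfilment of (1) and (3). Cutting a component $I_j(x)$ by a piece boundary is precisely what forces (3), but a priori the new endpoint functions $\max(g^1_j,a_r)$, $\min(g^2_j,b_r)$, and especially the degenerate-interval conventions needed to keep (2), might have a per-piece behaviour (which component meets which piece, which endpoint gets ``clamped'', whether $I_j(x)\cap X'_r$ is empty) that is not constant on each $X'_s$; repairing this calls for a further refinement of $\{X'_i\}$, and one must check that finitely many such refinements suffice rather than running forever. This is where o-minimality (finiteness of the relevant definable parameter sets) does the essential work, aided by the fact that $\po$ is a partial order, which keeps the functions $g^i_j,h^i_j$ from behaving pathologically.
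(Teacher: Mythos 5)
Your proposal is correct and follows essentially the same route as the paper: refine the given decomposition so that all endpoint functions of the components of $G(x)$ and $L(x)$ are continuous and weakly monotone on each piece, then truncate each component to the pieces via $\max(f_j^1,\inf X_i')$ and $\min(f_j^2,\sup X_i')$ and reindex. The bookkeeping issue you flag at the end (empty clamped intervals, consistency of the indexing across pieces) is exactly the part the paper dismisses as ``a simple exercise,'' so your extra caution is warranted but does not constitute a divergence in method.
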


\begin{proof}
We start with the given decomposition $\{X_i\}$ of $X$ and we
refine it as described before Definition \ref{MonRep} to a
decomposition $\{X_i'\}$ so that all the functions $f(x)$ defining
the endpoints of the intervals in both $G(x)$ and $L(x)$ are
either non-increasing or non-decreasing in $X_k'$ for all $k$.

We now define the functions $\{f^i_j\}$ which will define the
endpoints of the ``truncated'' intervals (so that each interval
composing $G(a)$ is entirely contained in a single cell $X_i$).

Let $i_1(a) := \min \{i| I_i(a) \neq \0\}$ and $i_2 := \max \{i|
I_i(a) \neq \0\}$ and define $f_{j,i_1}^1(a) = f_j^1(a)$ and
$f_{j, i_2}^2(a) = f_j^2(a)$.

For $i_1 < i < i_2$ define \[f_{j,i}^1(a) = \max\{f_j^1(a), \inf
X_i'\}\] and \[f_{j,i}^2(a) = \min\{f_j^2(a), \sup X_i\};\] let
\[\CF^G:=\{f_{j,i}^1\}_{i,j}\cup \{f_{j,i}^2\}_{i,j}.\]

Define $\CF^L$ in a similar way. We leave it as a simple exercise
to verify that all the resulting functions are continuous and
weakly monotone on each $X_i'$ so that the collections $\CF^G,
\CF^L$ satisfy Definition \ref{MonRep} with respect to the
decomposition $\{X_i'\}$, which finishes the proof.

\end{proof}

\begin{remark}
To avoid unpleasant trivial cases, after fixing a monotone
representation, we will throw away all the $X_i$ in the
representation consisting of a single point. Replacing $X$ with
$\bigcup \{X_i\mid X_i \mbox{\, is not a point}\}$ we may assume
that the monotone representation we are working with consists of
open intervals only.

To simplify the notation, when referring to a monotone representation,
we will only mention the decomposition of $X$ and use $\CF$ to
denote $\CF^G\cup \CF^L$ whenever no ambiguity can arise.
\end{remark}

\noindent From now on we fix a monotone representation
$\{X_1,\dots X_k, \CF\}$ of $(X,\po)$. To keep the
exposition cleaner, although we will repeatedly refine it (as
explained in Claim \ref{monrep}), we will not change the notation
for the representation. Our first task is to smoothen up $\po$, in
order to make it easier to handle.

\medskip

\noindent By o-minimality the relation

\[ a\poo b \Leftrightarrow \dim(G(b)\setminus G(a))=0.\]
is $\po$-definable. Thus

\begin{lemma}\label{closed G}
$\left.\right.$
\begin{enumerate}

\item $\poo$ is a quasi order refining $\po$ and every definable
$Y\subseteq X$ with infinite $\po$-chains contains infinite
$\poo$-chains.

\item For every $a\in X$ let $G^0(a):=\{x\mid a\poo x\}$ and
$L^0(a):=\{x\mid x\poo a\}$. Then the sets  $G^0(a)\cap X_i$ and
$L^0(a)\cap X_i$ are both relatively closed in $X_i$.

\end{enumerate}
\end{lemma}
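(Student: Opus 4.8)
The plan is to prove (1) by elementary manipulation of the sets $G(a)$, drawing on o-minimal finiteness only through the bound on the number of intervals in the fixed monotone representation, and to prove (2) by exploiting the continuity and disjointness built into that representation. Throughout I will assume $\po$ is reflexive: passing to the reflexive closure leaves the equivalence relation $a\po b\wedge b\po a$ unchanged (hence the quotient order, and the class of definable sets carrying infinite chains), and alters each $G(a)$ only by the point $a$, hence leaves $\poo$ unchanged. Under this convention $a\po b$ implies $G(b)\subseteq G(a)$, and $G(a)=G(b)$ implies $a\po b\po a$.

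For (1), transitivity of $\poo$ is immediate from $G(c)\setminus G(a)\subseteq(G(c)\setminus G(b))\cup(G(b)\setminus G(a))$ together with the stability of finiteness under finite unions, and $\poo$ visibly extends $\po$, since $a\po b$ makes $G(b)\setminus G(a)$ empty. For the statement about chains, fix a definable $Y\subseteq X$ with an infinite $\po$-chain — equivalently, since $\CM$ is saturated, with arbitrarily long finite $\po$-chains — and let $n$ be the number of interval-pairs $(f^1_j,f^2_j)$ occurring in $\CF^G$, so that every $G(a)=\bigcup_j(f^1_j(a),f^2_j(a))$ is a union of at most $n$ pairwise disjoint open intervals. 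Along any $\po$-chain $a_1\po\cdots\po a_m$ contained in a single $\poo$-class the $G(a_i)$ form a strictly decreasing chain $G(a_1)\supsetneq\cdots\supsetneq G(a_m)$ (strictness by reflexivity, as above), and $F:=G(a_1)\setminus G(a_m)$ is finite; deleting the $|F|$ points of $F$ from the $c\le n$ open intervals comprising $G(a_1)$ produces exactly $c+|F|$ open intervals, which must be at most $n$, so $|F|\le n-c$ and hence $m\le|F|+1\le n$. Since the set of positions along a $\po$-chain occupied by a given $\poo$-class is convex (immediate from transitivity of $\poo$), an arbitrarily long $\po$-chain in $Y$ meets arbitrarily many $\poo$-classes, yielding arbitrarily long — hence, by saturation, infinite — $\poo$-chains in $Y$.

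For (2), fix $a$ and suppose toward a contradiction that some $x\in X_i$ lies in the closure of $G^0(a)\cap X_i$ relative to $X_i$ but $x\notin G^0(a)$; then $G(x)\setminus G(a)$ is infinite, so by o-minimality it contains an open interval $(c,d)$ disjoint from $G(a)$. Because the intervals $(f^1_j(x),f^2_j(x))$ are pairwise disjoint (condition (2) of the monotone representation) and their union $G(x)$ contains the connected set $(c,d)$, we have $(c,d)\subseteq(f^1_j(x),f^2_j(x))$ for a single $j$, and after shrinking $(c,d)$ we may assume $f^1_j(x)<c<d<f^2_j(x)$. Now $G^0(a)\cap X_i$ is a definable subset of the interval $X_i$ with $x$ in its boundary, hence contains an interval $I$ having $x$ as an endpoint; as $f^1_j$ and $f^2_j$ are continuous on $X_i$, for $x'\in I$ close enough to $x$ we get $f^1_j(x')<c<d<f^2_j(x')$, so $(c,d)\subseteq(f^1_j(x'),f^2_j(x'))\subseteq G(x')$ and thus $(c,d)\subseteq G(x')\setminus G(a)$, contradicting $x'\in G^0(a)$. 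Hence $G^0(a)\cap X_i$ is relatively closed in $X_i$, and the same argument with $\CF^L$ and $L(\cdot)$ in place of $\CF^G$ and $G(\cdot)$ handles $L^0(a)\cap X_i$.

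I expect the only step requiring real care to be the chain bound in (1): one must rule out that the coarser relation $\poo$ collapses a long $\po$-chain to a short one, and this is exactly where o-minimality enters, through the uniform bound on the number of open-interval components of $G(a)$ that the monotone representation makes available. The reflexivity reduction at the outset is what makes the strictness of $G(a_1)\supsetneq\cdots\supsetneq G(a_m)$ free — without it one would have to fret that $G(a)=G(b)$ need not force $a$ and $b$ into the same $\po$-class. Part (2) is then routine, its content lying entirely in the continuity and disjointness properties of the representation.
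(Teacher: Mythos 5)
Parts (1) and the $G^0(a)$ half of (2) are correct. In fact your part (1) supplies exactly the detail the paper compresses into ``by compactness'': the uniform bound coming from the monotone representation on how many members of a strict $\po$-chain can lie in a single $\poo$-class (via the count of interval components of $G(a_1)\setminus F$), together with the convexity of $\poo$-classes along a chain, is a correct and welcome justification that the passage to the $\poo$-quotient cannot collapse long chains; and your argument for $G^0(a)\cap X_i$ is the same continuity argument the paper gives.

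The gap is in your last sentence. By definition $a\poo b$ iff $G(b)\setminus G(a)$ is finite, so \emph{both} cones are defined through $G$ alone: $L^0(a)=\{x\mid x\poo a\}=\{x\mid G(a)\setminus G(x)\ \text{is finite}\}$, and $\CF^L$ plays no role in it. Substituting $L$ for $G$ and $\CF^L$ for $\CF^G$ in your argument proves relative closedness of $\{x\mid L(x)\setminus L(a)\ \text{finite}\}$, which is a different set; there is no symmetry identifying it with $L^0(a)$, so the $L^0$ case is simply not proved. The correct argument (this is the paper's second paragraph) still uses the $\CF^G$ functions, but now with continuity at the \emph{moving} point: if $x\in X_i$ and $x\notin L^0(a)$, pick an open interval $I\subseteq G(a)$ with $I\cap G(x)=\emptyset$; you must show some subinterval of $I$ stays disjoint from $G(x')$ for all $x'\in X_i$ near $x$. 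This is not the mirror image of the $G^0$ case (where the witnessing interval sits \emph{inside} the varying cone and is preserved trivially by continuity): here intervals of $G(x')$ can creep into, or open up inside, the gap --- notably when $f_j^1(x)=f_j^2(x)$ happens to lie in $I$, so that a degenerate interval at $x$ becomes nondegenerate at nearby $x'$. One fixes this by first shrinking $I$ to a subinterval whose closure avoids the finitely many values $f_j^i(x)$; then for each $j$ either $f_j^2(x)$ lies to its left or $f_j^1(x)$ lies to its right, continuity on $X_i$ preserves these strict separations for $x'$ near $x$, and the subinterval misses every $(f_j^1(x'),f_j^2(x'))$, giving $x'\notin L^0(a)$ and hence the relative closedness of $L^0(a)\cap X_i$.
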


\begin{proof}

Since  $a\po b \Rightarrow a\poo b$ and $\poo$ is transitive it is a definable quasi order refining
$\po$. By compactness, if $\po$
had infinite chains then so will $\poo$.

Let $b\in X_i$ and let $a\in X$ be such that $a\not\poo b$, i.e.
$|G(b)\setminus G(a)|$ is infinite, so it contains an interval.
The functions in $\CF$ are continuous at $b$, so there is an
interval $U\subseteq X_i$ with  $b\in U$ such that $|G(x)\setminus
G(a)|$ is infinite (so $a\not\poo x$) for all $x\in U$.
Therefore  $b\notin \{ x\mid  a\po^0 x\}$ implies that $b\notin
\partial \{ x\mid a\po^0 x\}$, proving that $G^0(a)$ is relatively closed in $X_i$.

Now suppose that $a\in X_j$ and that $b\in X$ is such that
$a\not\poo b$ so that $|G(b)\setminus G(a)|$ is infinite, so it
contains an open interval $I$. Since $I\cap G(a)=\emptyset$ the
continuity of the functions in $\mathcal F$ (around $a$ this time)
there is a neighbourhood $V$ of $a$ such that $I\setminus G(a')$ is
infinite for all $a'\in V$; so $a'\not\poo b$ for any such $a'$
implying, as above, that $L(b)$ is relatively closed in $X_j$.
\end{proof}

By Claim \ref{monrep} we can find a
monotone representation of $(X,\poo)$ refining the monotone
representation of $(X,\po)$. So we may assume that
$(X_1,\dots,X_k,\mathcal F)$ is a monotone representation of both
quasi orders. Since Lemma \ref{closed G} is weakened
by the refinement of the monotone representation (there are fewer
interior points), its conclusion will remain valid as we will further refine
$(X_1,\dots,X_k,\mathcal F)$.

It may be worth pointing out that even if $\po$ is an order,
$\poo$ need not be one (i.e. it may be a quasi order). But after
reducing ourselves to a definable subset of $X$ we may assume that
$E(a,b):=a\poo b \land b\poo a$ has finite classes. Thus,
identifying each $E$-class with its smallest element we may assume
the map $\pi: X\to X/E$ is in fact a map from $M$ to $M$. Refining
the above monotone representation further, we may assume that
$\pi$ is continuous on all the cells of the representation. In
particular we may assume that $\poo$ is in fact an order, and we
can work with $\poo$ instead of $\po$, obtaining the following. The fact that the order $\poo$ induces on $X/E$ still satisfies Lemma \ref{closed G} (after possibly removing from $X$ finitely many points) is easy. We obtained:

\begin{fact}\label{assuming closed G} We may assume without loss of generality that $\po$
is an order such that the sets  $G(a)\cap X_i$ and $L(a)\cap X_i$
are both relatively closed in $X_i$ for every $i$ and every $a\in
X$.
\end{fact}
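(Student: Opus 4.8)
The plan is to perform, in order, the three reductions sketched in the paragraphs above, checking that none of them destroys the features accumulated so far. \emph{Step 1 (from $\po$ to $\poo$).} First, $a\poo b\Leftrightarrow\dim(G(b)\setminus G(a))=0$ is $\po$-definable, being cut out from the $\CM$-definable family $\{G(x)\}_{x\in X}$ by a uniformly definable dimension condition. By Lemma~\ref{closed G}(1), $\poo$ is a quasi order refining $\po$ under which every $\po$-definable subset of $X$ with infinite $\po$-chains still has infinite chains; so it suffices to prove the Fact with $\poo$ in place of $\po$, and we rename $\poo$ as $\po$. By Claim~\ref{monrep} fix a monotone representation $(X_1,\dots,X_k,\CF)$ of this $\po$ refining the one we started with; by Lemma~\ref{closed G}(2) the cones $G(a)\cap X_i$ and $L(a)\cap X_i$ are then relatively closed in $X_i$, and, as noted in the text, this survives every subsequent refinement of the representation, so we refine freely below.

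\emph{Step 2 (finite $E$-classes).} Let $E(a,b):=a\po b\wedge b\po a$ and $W:=\{a\in X:[a]_E\text{ is infinite}\}$, both $\po$-definable. There are only finitely many infinite $E$-classes: otherwise a suitable $\CM$-definable coding of classes by points of $\cl_{\CM_\infty}(X)$ --- e.g.\ $a\mapsto$ the infimum of the union of the nondegenerate definably connected components of $[a]_E$ --- would take infinitely many values on $W$, hence an interval of values, and restricting it over such an interval would give a definable surjection from a subset of $M$ onto an interval with $1$-dimensional fibres, forcing dimension $\geq 2$ and contradicting $\dim X=1$. So $W$ is a union of finitely many $E$-classes; since $X/E$ has infinite chains (this being the meaning of ``$\po$ has infinite chains''), deleting the finitely many points of $W/E$ leaves $X\setminus W$ with infinite $\po$-chains. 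Replacing $X$ by $X\setminus W$, every $E$-class is now finite.

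\emph{Step 3 (the induced order on $X/E$).} Since $E$ has finite classes, $\pi\colon X\to X/E$, $\pi(a):=\min[a]_E$, is an $\CM$-definable map from $M$ to $M$; refining $(X_1,\dots,X_k,\CF)$ once more we may assume $\pi$ is continuous, and indeed injective and weakly monotone, on each cell. Then $\po$ induces a genuine partial \emph{order} on $X/E$, with $G(\pi(a))=\pi(G(a))$ and $L(\pi(a))=\pi(L(a))$. Re-running the continuity argument from the proof of Lemma~\ref{closed G}, now with $\pi$ available, shows that these induced cones again meet each cell in a relatively closed set once we discard from $X/E$ at most finitely many points. Renaming $X/E$ as $X$ and the induced order as $\po$ yields the statement.

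\emph{Main obstacle.} All of this is bookkeeping except the passage from the quasi order to an order: one must check that the locus $W$ of infinite $E$-classes accounts for only finitely many points of the quotient (the dimension count of Step 2), and that the cellwise-continuous, cellwise-monotone section of $\pi$ provided by o-minimal cell decomposition is compatible with the monotone representation, so that relative closedness of cones is not lost in the quotient. These are the only points where genuine care is needed.
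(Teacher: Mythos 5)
Your proposal is correct and follows essentially the same route as the paper: pass to $\poo$ via Lemma \ref{closed G} and Claim \ref{monrep}, discard the finitely many infinite $E$-classes, identify each remaining class with its least element, and refine the representation so that $\pi$ is continuous (hence piecewise strictly monotone) on the cells. You merely fill in details the paper leaves implicit, namely the dimension count showing there are only finitely many infinite $E$-classes and the transfer of relative closedness to the quotient after deleting finitely many points.
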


One reason for this additional massaging of our monotone
representation is to obtain:

\begin{corollary}\label{constant}
If $f_j^2(x) = f_{j+1}^1(x)$ for some $j < |\mathcal F /2|$ and
$x\in X_i$ (some i) then both functions are locally constant near
$x$.
\end{corollary}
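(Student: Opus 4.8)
The plan is to read off from Fact~\ref{assuming closed G} that whenever $f_j^2(x)=f_{j+1}^1(x)$ the interval $I_j(x)$ (and likewise $I_{j+1}(x)$) must coincide with the cell of the representation that contains it, and then to note that this rigidity persists on a whole neighbourhood of $x$ by continuity of the endpoint functions. Before starting I would refine the monotone representation once more, deleting on each cell those interval-functions that define an identically empty interval there; this is harmless, since conditions (1)--(3) of Definition~\ref{MonRep} and the conclusion of Fact~\ref{assuming closed G} are preserved under refinement, and it guarantees that, writing $c:=f_j^2(x)=f_{j+1}^1(x)$, both $I_j(x)=(f_j^1(x),f_j^2(x))$ and $I_{j+1}(x)=(f_{j+1}^1(x),f_{j+1}^2(x))$ are non-empty.

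The key elementary observation is that no endpoint $f_m^i(x)$ ever lies in $G(x)$: by clause (2) of Definition~\ref{MonRep} the intervals $I_1(x),I_2(x),\dots$ are pairwise disjoint and linearly ordered, the closure of $I_m(x)$ lying weakly to the left of $I_{m+1}(x)$, so $f_m^i(x)$ can be an interior point of none of them. In particular $c=f_j^2(x)\notin G(x)$ and $f_j^1(x)\notin G(x)$. Now let $X_r$ be the cell with $I_j(x)\subseteq X_r$ (clause (3)). Since $I_j(x)\subseteq G(x)\cap X_r$ we have $f_j^2(x)=\sup I_j(x)\in\overline{I_j(x)}\subseteq\overline{G(x)\cap X_r}$, while $f_j^2(x)\notin G(x)$; relative closedness of $G(x)\cap X_r$ in $X_r$ (Fact~\ref{assuming closed G}) therefore forces $f_j^2(x)\notin X_r$, and as $X_r$ is an open interval approached from within by $I_j(x)$ this means $f_j^2(x)=\sup X_r$. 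The identical argument at the left endpoint gives $f_j^1(x)=\inf X_r$, so $I_j(x)=X_r$; symmetrically $I_{j+1}(x)=X_{r'}$ for the cell $X_{r'}\supseteq I_{j+1}(x)$, and $\sup X_r=c=\inf X_{r'}$.

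To conclude, let $y$ range over a small interval $U\ni x$ inside the cell $X_i$ containing $x$. By continuity of $f_j^1,f_j^2$ on $X_i$, after shrinking $U$ the interval $I_j(y)$ stays non-empty and contains a fixed interior point of $X_r$ (say its midpoint), so the cell attached to $I_j(y)$ by clause (3) is again $X_r$; repeating the closedness argument above verbatim with $y$ in place of $x$ yields $f_j^1(y)=\inf X_r$ and $f_j^2(y)=\sup X_r=c$ for all $y\in U$, and symmetrically $f_{j+1}^1(y)=\inf X_{r'}=c$. Thus both $f_j^2$ and $f_{j+1}^1$ are constantly equal to $c$ on $U$, which is the assertion; the case in which $f_j^2,f_{j+1}^1$ belong to $\CF^L$ is handled by reversing the order. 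The only genuine subtlety is the preliminary normalisation eliminating identically empty intervals and the verification that it keeps us inside the hypotheses of Fact~\ref{assuming closed G}; once that is in place the rest is a direct application of relative closedness and the continuity of definable functions in an o-minimal structure.
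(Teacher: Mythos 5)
The paper leaves this corollary as an exercise, and your argument is the intended one: by clause (2) of Definition~\ref{MonRep} the intervals of the representation are open and consecutively ordered, so the common value $c=f_j^2(x)=f_{j+1}^1(x)$ lies in no $I_m(x)$, i.e.\ $c\notin G(x)$, while $c\in\overline{G(x)\cap X_r}$ for the cell $X_r\supseteq I_j(x)$; relative closedness (Fact~\ref{assuming closed G}) then expels $c$ from $X_r$, forcing $c=\sup X_r$ (and symmetrically $c=\inf X_{r'}$), and repeating the computation at nearby $y$, whose $j$-th interval still meets the interior of $X_r$, pins $f_j^2(y)=\sup X_r$ and $f_{j+1}^1(y)=\inf X_{r'}$, so both functions are locally constant. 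This is exactly what ``immediate from Fact~\ref{assuming closed G}'' amounts to.

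The one imprecise step is your preliminary normalisation. Deleting, on each cell, the interval-functions that are \emph{identically} empty there does not guarantee that $I_j(x)$ and $I_{j+1}(x)$ are non-empty at the particular point $x$: a pair $f_j^1,f_j^2$ may bound a non-empty interval on part of the cell and collapse, $f_j^1(x)=f_j^2(x)$, at $x$ itself. Your argument genuinely uses non-emptiness (otherwise $c$ need not lie in $\overline{G(x)\cap X_r}$), and in such a degenerate situation the literal conclusion can even fail, since the coincident endpoint functions of an empty interval are not constrained by closedness of the cones and may vary with $y$. The repair is standard and in the spirit of the paper's repeated refinements: first refine the decomposition so that each definable set $\{y\in X_i\mid f_j^1(y)<f_j^2(y)\}$ is a union of cells, and only then discard the (now cell-wise identically) empty intervals; alternatively observe that the corollary is only invoked at generic points of a cell, where such degenerations do not occur. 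With that adjustment your proof is complete.
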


The proof is immediate from the assumptions of Fact \ref{assuming
closed G} and we leave it as an easy exercise to the reader.
Theorem \ref{theorem} is now proved in two steps. First, we show that
it is enough to find an $\CM$-definable interval where $\po$
agrees with $<$ (the order on $\CM$), and then we proceed to find such an
$\CM$-interval.

\subsection{A special case}

In this subsection we show that if $\po$ agrees with $<$ on some
$\CM$-definable interval $X_0$ then a local version of Theorem
\ref{theorem} follows.

\begin{lemma}\label{from finite to total}
Let $(X,\po)$ be an order definable in an o-minimal structure $\CM
:=(M,<,\dots)$. Assume that $\dim_{\CM} X =1$ and $X=X_0\cup Y$
for some $\CM$-definable $X_0$ such that $\po|_{X_0}$ is a dense
linear order. Then for any $\CM$-generic $e\in X_0$ there exists an
infinite $\po$-definable set $X'$ with $e$ in the interior of $X'$
such that either $\po|_{X'} = \le |_{X'}$ or $\, \ge |_{X'} =\, \po|_{X'}
$.
\end{lemma}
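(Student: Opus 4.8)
The plan is to leverage the structure theory we have already built up: $\po$ is now an honest order whose ``greater than'' and ``less than'' cones are relatively closed in each cell $X_i$ of a fixed monotone representation, and we are told that on a single $\CM$-definable interval $X_0$ it coincides with the $\CM$-order $<$. Fix an $\CM$-generic $e \in X_0$. First I would note that, because $\po|_{X_0}$ agrees with $<$, the set $X_0$ is a $\po$-definable linearly ordered set (with $e$ interior); the whole content of the lemma is to replace $X_0$ by a possibly smaller $\po$-definable interval on which the induced order is \emph{monotone in the same direction everywhere}, i.e. genuinely $\le$ or genuinely $\ge$. The subtlety is that ``$\po|_{X_0}=\le|_{X_0}$'' as stated already says $\po$ and $<$ literally coincide on $X_0$, so if that is taken at face value there is nothing to do; the real situation the authors have in mind is weaker — $\po|_{X_0}$ is \emph{some} dense linear order on the $\CM$-interval $X_0$, not necessarily $<$ itself. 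So the task is: from a $\po$-definable dense linear order on an $\CM$-interval, extract a $\po$-definable subinterval on which that order is $\CM$-definably (anti)monotone.

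The key steps, in order, would be: (1) Consider the map sending $x \in X_0$ to the cut it determines in $(X_0, \po|_{X_0})$, i.e. to $G(x) \cap X_0$ and $L(x)\cap X_0$; by o-minimality these are finite unions of $<$-intervals with definable endpoint functions, and by Lemma \ref{closed G} / Fact \ref{assuming closed G} these are relatively closed in each cell. (2) Refine the monotone representation once more (via Claim \ref{monrep}) so that on the cell containing $e$ the endpoint functions of $G(x)\cap X_0$ are all continuous and weakly monotone, and apply Corollary \ref{constant} to rule out the coincidence of consecutive endpoints except on locally constant stretches. (3) Now the density of $\po|_{X_0}$ forces $G(x)\cap X_0$ to genuinely vary with $x$ near $e$, which pins down that (after shrinking $X_0$ to a small interval around $e$, still $\po$-definable since $e$ is in the $\po$-definable set $X_0$) exactly one of the endpoint functions, say $f$, is strictly monotone and the order $\po|_{X_0}$ is recovered from comparing $f$-values: $x \po y \iff f(x) < f(y)$ or $f(x)>f(y)$ according as $f$ is increasing or decreasing. (4) Then $X' := X_0$ (shrunk) with $<' := $ the pullback of $<$ along $f$ is a linear order, but in fact since $f$ is an $\CM$-definable monotone bijection onto its image, $\po|_{X'}$ literally equals $\le|_{X'}$ or $\ge|_{X'}$ after identifying $X'$ with $f(X')$ — and $f(X')$ is again a $\po$-definable set with $f(e)$ interior. (5) Finally check $e$ lies in the interior of $X'$: this is automatic since $e$ was $\CM$-generic in $X_0$ and all the refinements only removed lower-dimensional sets and passed to a subinterval containing $e$.

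The main obstacle I expect is step (3): ensuring that the ``cut function'' $x \mapsto (G(x)\cap X_0, L(x)\cap X_0)$ is \emph{injective near $e$} and determined by a single strictly monotone endpoint function, rather than, say, jumping between several connected components of $G(x)$ as $x$ moves. Density of $\po|_{X_0}$ gives injectivity of the cut map in principle, but translating that into ``one endpoint function does all the work, monotonically'' requires the combinatorics of the monotone representation — one must argue that if two different endpoint functions both contributed non-trivially one could produce either a violation of transitivity of $\po$ or a non-dense pair of $\po$-comparable points. This is where Corollary \ref{constant} and the weak monotonicity clauses of Definition \ref{MonRep} do the heavy lifting, and where a careful case analysis on whether the relevant endpoint functions are increasing or decreasing is unavoidable. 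Everything after that is bookkeeping: once we have the strictly monotone $f$, the conclusion that $\po$ becomes $\le$ or $\ge$ on $f(X')$ is formal, and genericity of $e$ survives all the finitely many refinements.
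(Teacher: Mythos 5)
There is a genuine gap, and it comes from misplacing where the difficulty of the lemma lies. The step you spend most of your effort on --- turning the abstract dense linear order $\po|_{X_0}$ into something that agrees with $<$ or $>$ on a subinterval --- is the easy part: the paper disposes of it in one sentence ("reducing $X_0$ if needed and possibly replacing $\po(x,y)$ with $\po(y,x)$ we may assume, by o-minimality, that $\po$ agrees with $<$ on $X_0$"). The real content of the lemma is that the set $X'$ must be \emph{$\po$-definable}, i.e.\ definable in the reduct $(X,\po)$ alone, whereas $X_0$ is only given as $\CM$-definable. Your construction never produces such a set: the phrase "after shrinking $X_0$ to a small interval around $e$, still $\po$-definable since $e$ is in the $\po$-definable set $X_0$" rests on a false premise ($X_0$ is not assumed $\po$-definable, and $<$-subintervals of it certainly are not, since the order $<$ is not part of the reduct), and the later claim that $f(X')$, the image under an $\CM$-definable endpoint function, "is again a $\po$-definable set" is unjustified for the same reason.

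Concretely, the obstruction you do not address is the set $Y=X\setminus X_0$. Any honestly $\po$-definable neighbourhood of $e$, e.g.\ a $\po$-interval $(a,b)_{\po}$ with $a,b\in X_0$, will in general contain infinitely many points of $Y$, on which $\po$ need not resemble $<$ at all, so the conclusion $\po|_{X'}=\le|_{X'}$ fails for such sets as they stand. The paper's proof is devoted entirely to this: it fixes $Z_0:=(a,b)_{\po}$, introduces the functions $m(x)=\sup\{L(x)\cap X_0\}$ and $u(x)=\inf\{G(x)\cap X_0\}$, and runs an induction on the number of cells $X_i$ of the monotone representation that a $\po$-definable $Z\ni e$ meets in an infinite set, using continuity and strict monotonicity of $m,u$ (together with Fact \ref{assuming closed G}) to cut away, by further $\po$-definable restrictions $(a',b')_{\po}$ or $(m(c),u(c))_{\po}$, the cells lying in $Y$ while keeping $e$ interior; only when $Z\setminus X_0$ has dimension $0$ does the agreement of $\po$ with $<$ on $X_0$ yield the conclusion. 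Without an argument of this kind your proposal proves only the (essentially trivial) statement that some $\CM$-definable interval around $e$ carries $\po$ as $\le$ or $\ge$, which is not what the lemma asserts and is not enough for its use in the proof of Theorem \ref{theorem}.
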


\begin{proof}

Since $X$ is 1-dimensional, we may assume that $X\subseteq M$. Fix
a monotone representation $(X_1,\dots,X_k,\mathcal F)$ of $X$, and
let $e$ be any $\CM$-generic element of $X_0$. In the proof we
will keep shrinking $X_0$, making sure that $e$ is still generic
in the subset of $X_0$ that we keep. Refining our monotone
representation, we may assume that $X_0$ is one of the cells in
the decomposition of $X$.

Reducing $X_0$ if needed and possibly replacing $\po(x,y)$ with
$\po(y,x)$ we may assume, by o-minimality, that $\po$ agrees with
$<$ on $X_0$. Because $\CF$ is a finite collection of functions,
we reduce $X_0$ further to assure that $f^{-1}(c)$ is finite for
all $c\in X_0$ and $f\in \CF$. Note that this can be done without
harming any of the previous requirements.

\medskip

Choose $a<b \in X_0$ generic enough and close to each other such
that $e\in (a,b)_{\po}$. From now on, we will restrict ourselves
to the set $Z_0:=(a,b)_{\po}$. For $x\in Z_0$ denote $u(x) =
\inf\{G(x)\cap X_0\}$ and $m(x):= \sup \{L(x)\cap X_0\}$.

Fix some generic $x\in Z_0$. By assumption (Fact \ref{assuming
closed G}) we know that $m(x)\po x \po u(x)$ and by Fact \ref{assuming closed G} 
$m(x) < u(x)$ for all $x\in ((a,b)_{\po}\setminus X_0)$ (otherwise, $m(x)\po x \po m(x)$
contradicting the assumption that $x$ is an order). We now refine our
monotone representation one last time to assure that $m(x),u(x)$
are continuous and monotone on each cell of the representation;
the assumption that $\dim f^{-1}(c) = 0$ for all $f\in \CF$ and
all $c\in X_0$ assures that both functions will be in fact
strictly monotone.

Let $\mathcal{Z}$ be the collection of infinite $\po$-definable
subsets of $Z_0$ whose interior contains $e$.
The proof proceeds by induction on the
possible cardinalities of the set
\[\left\{i\left| \right. \left\{Z\cap X_i\right\} \text{ is an infinite set.}
\right\}\] for $Z\in \mathcal{Z}$ (and a fixed decomposition
$\{X_i\}$ of $X$ satisfying all the assumptions mentioned up to this point).

Clearly, $|Z\cap X_0|=\infty$ for any $Z\in \mathcal{Z}$ and if
for some such $Z$ we get $\dim(Z\setminus X_0) = 0$ then $\po$ is
a linear order on $Z$ (possibly removing finitely many accidental
points) and the lemma will follow; thus, we may assume by way of
contradiction that any $Z\in \mathcal{Z}$ has infinite
intersection with some $X_i$. Let $Z\in \mathcal Z$ minimise the
number of intervals $X_i$ with which it has an infinite
intersection. For simplicity assume that $Z\cap X_1$ is infinite. 

Let $c\in Z\cap X_1$ be generic. Since $Z_c':=(m(c),u(c))_{\po}$ contains the $X_0$-subinterval $(m(c),u(c))$ it is an
infinite set containing $c$. If $e\notin (m(c),u(c))$ for all
generic $c\in Z\cap X_1$, then by the continuity of $u(x),m(x)$ we know
that either $e>u(c)$ for all but finitely many $c\in Z\cap X_1$ or $e<m(c)$ for all
such $c$. Both cases are analogous so we may assume the latter
holds for all generic $c$. We define $Z':=Z\cap (a,b')_{\po}$ for some $a
< e < b' \le \inf \{m(c)| c\in X_1\cap Z_0\}$; by definition $Z'\cap X_1$ contains no generic points so by continuity $Z'\cap X_1=\emptyset$ and $e$ is in the interior of $Z'$. Since $Z'\subseteq Z$ this contradicts
the choice of $Z$. Therefore we may assume that $e\in (m(c),u(c))$
for some generic $c\in Z\cap X_1$.

We will investigate two cases. Suppose first that $u(x),m(x)$ are
both increasing on $X_1$ (the case they are both
decreasing is similar). This implies that $y\notin
(m(c),u(c))_{\po}$ for all $y\in X_1$ generic over $c$ (for if $y
< c$ then $m(y) < m(c)$ implying - by the definition of $m(y)$ -
that $y\notin G(m(c))$ and if $y > c$ then $u(y) > u(c)$ and
$y\notin L(u(c))$). Since $Z_c:=(m(c),u(c))_{\po}\in \mathcal{Z}$,
this would lead to a contradiction to the choice of $Z$.

The only remaining possibility is that $m(x)$ is increasing and $u(x)$ is decreasing in $X_1$, or
vice versa. Consider $m:= \sup \{m(x)\mid x\in X_1\cap Z\}$. If $m
< e$ then by restricting ourselves (as above) to $(a',b)_{\po}$
for some $m<a'<e$ we get a contradiction to the choice of $Z$ (as
we did there). Otherwise (because $e$ is generic) we
know that $m > e$. By symmetry we may assume that $u < e$ where
$u:=\inf \{u(x)\mid x\in X_1\cap Z\}$ . By continuity and
monotonicity, this means that $G(x)\cap L(x)\cap X_0\neq \0$ for
some $x\in X_1\cap Z$. Since this is impossible, the lemma
follows.

\end{proof}

Note that for every generic $e\in X_0$ the set $X'_e$ we found
satisfying the conclusion of Lemma \ref{from finite to total} was
defined using one of finitely many formulae $\psi_1(x,e),\dots,
\psi_s(x,e)$ (depending, possibly, on parameters independent from $e$, on the monotone representation, but not on
$e$ itself). So Lemma \ref{from finite to total} shows that the formula
$\theta(z)$ given by the disjunction of the formulae ``$\po$
restricted to $\psi_i(x,z)$ is a dense linear order'' is satisfied
by every generic $e\in X_0$, whence it is true of all but possibly
finitely many $e\in X_0$.

\subsection{Reducing to the special case.}
We will now show how to obtain the assumptions of the
previous subsection and apply the result to prove Theorem \ref{theorem}.

\begin{claim}\label{J1J2}
Let $X_k$ be a cell in the representation of $X$. Assume there are
$a,b\in X_k$ such that $a\po b$ and $a<b$ (the case $b < a$ will
have analogous results).
\begin{enumerate}
\item If $f_i^1$ is non-increasing in $X_k$ then so is $f_i^2$ and
if $f_i^2$ is non-decreasing in $X_k$ then so is $f_i^1$; either
of these cases implies that $I_i(a)\cap I_i(b)=\emptyset$. \item
If $f_i^1$ is increasing or constant in $X_k$ and $f_i^2$ is
decreasing or constant, then $I_i(a)\subseteq I_i(b)$.
\end{enumerate}

\end{claim}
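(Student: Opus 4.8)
\textbf{Proof plan for Claim \ref{J1J2}.}

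The plan is to argue by o-minimality from the monotonicity hypotheses together with the basic structural fact that, for $a \po b$, the interval $I_i(b)$ of $G(b)$ must be, roughly, contained in the union of the intervals of $G(a)$ (since $\po$ is transitive: $b \po y$ and $a \po b$ force $a \po y$, so $G(b) \subseteq G(a)$ up to the zero-dimensional discrepancy allowed by our having passed to $\po^0$ and then invoked Fact \ref{assuming closed G}). Concretely, I would first fix the index $i$ and recall that $I_i(x) = (f_i^1(x), f_i^2(x))$, that on the cell $X_k$ each $f_i^j$ is continuous and weakly monotone, and that the intervals are ordered: $f_i^1 \le f_i^2 \le f_{i+1}^1$ for all $x$.

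For part (1): suppose $f_i^1$ is non-increasing on $X_k$ and, towards a contradiction, $f_i^2$ is not non-increasing, i.e. (by monotonicity) $f_i^2$ is non-decreasing and non-constant. Then for $a < b$ in $X_k$ we would have $f_i^1(b) \le f_i^1(a) \le f_i^2(a) \le f_i^2(b)$, so $I_i(a) \subseteq I_i(b)$, and in particular $G(a)$ would contain an interval disjoint from $G(b)$ only if $f_i^1(a) < f_i^1(b)$, which is excluded; but more to the point the transitivity inclusion $G(b) \subseteq G(a)$ (up to dimension $0$) combined with $I_i(a) \subsetneq I_i(b)$ and the ordering $f_i^2(b) \le f_{i+1}^1(b) \le f_{i+1}^1(a)$ of the cones forces a positive-dimensional piece of $I_i(b)$ to lie outside $G(a)$ — contradiction. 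Hence $f_i^2$ is non-increasing too; the symmetric argument handles the $f_i^2$ non-decreasing case. For the disjointness $I_i(a) \cap I_i(b) = \emptyset$ in the non-increasing situation: with $a < b$ both endpoint functions non-increasing, $I_i(b) = (f_i^1(b), f_i^2(b))$ lies (weakly) to the left of $I_i(a)$; if they overlapped on an interval, that interval sits in both $G(a)$ and $G(b)$, but then the \emph{rest} of $I_i(a)$ (the part with $x > f_i^2(b)$), which has positive dimension since $f_i^2$ strictly decreased — here one must separate the locally-constant case, where Corollary \ref{constant} or a direct shrinking argument shows the intervals actually coincide or the overlap is harmless — produces a positive-dimensional subset of $G(a) \setminus G(b)$, contradicting $a \po b \Rightarrow$ (essentially) $G(b) \supseteq G(a)$. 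One has to be slightly careful about which cone contains which; I would phrase the disjointness as: the only way $a \po b$ with $a<b$ and these monotonicities can hold is if the $i$-th cones separate, because any overlap plus the strict motion of an endpoint violates the cone-inclusion coming from transitivity.

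For part (2): if $f_i^1$ is increasing-or-constant and $f_i^2$ is decreasing-or-constant on $X_k$, then for $a < b$ in $X_k$ we get $f_i^1(a) \le f_i^1(b)$ and $f_i^2(b) \le f_i^2(a)$, hence $I_i(b) = (f_i^1(b), f_i^2(b)) \subseteq (f_i^1(a), f_i^2(a)) = I_i(a)$; wait — the claim asserts $I_i(a) \subseteq I_i(b)$, so in fact I have the hypothesis orientation backwards and should instead read it as: \emph{since} $a \po b$ forces $G(b)$ to be contained in $G(a)$ (up to dimension zero) while simultaneously the endpoint monotonicities would force $I_i(b) \subseteq I_i(a)$, the two are compatible only when the inclusion runs the other way, i.e. when the endpoints do not actually move on the relevant sub-cell, so after shrinking (using that $f^{-1}(c)$ is finite, hence the functions are piecewise strictly monotone or constant) one gets $I_i(a) \subseteq I_i(b)$ on a cofinite piece. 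The cleanest route is to combine the transitivity inclusion $G(a) \supseteq G(b)$ (valid up to a zero-dimensional set) with the endpoint inequalities to pin down $I_i(a)$ versus $I_i(b)$ on each monotonicity cell.

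\textbf{Main obstacle.} The delicate point is the interplay between the "up to dimension $0$" slack (we are really working with $\po^0$ and have only arranged closedness via Fact \ref{assuming closed G}, not literal containment $G(b) \subseteq G(a)$) and the strict-versus-weak monotonicity of the endpoint functions; keeping track of exactly when an endpoint is locally constant (invoking Corollary \ref{constant}) versus strictly monotone, and matching up which connected component $I_j(a)$ of $G(a)$ absorbs a given component $I_i(b)$ of $G(b)$, is where all the case-analysis work lives. I expect the bookkeeping — not any single idea — to be the bulk of the argument, and I would organize it by first proving the raw inclusion $\dim(G(b) \setminus G(a)) = 0$ from $a \po b$, then reading off component-by-component consequences from the monotone representation.
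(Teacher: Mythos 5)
Your overall route is the same as the paper's: derive a containment between the cones $G(a)$ and $G(b)$ from $a\po b$ and then read off component-by-component consequences from the monotone representation. But what you single out as the main obstacle --- the ``up to dimension $0$'' slack --- is not actually there. By the time Claim \ref{J1J2} is stated, Fact \ref{assuming closed G} is in force: $\po$ (the redefined relation) is an honest partial order, so transitivity gives the literal inclusion $G(b)\subseteq G(a)$, with no exceptional set; this one line is the engine of the paper's proof. From it, each $I_i(b)$, being a connected subset of $G(a)$, lies inside a single interval $I_j(a)$ (the $I_j(a)$ are pairwise disjoint and separated by gaps missing from $G(a)$), and the monotonicity hypotheses then decide which $j$ is possible. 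Working instead with $\dim(G(b)\setminus G(a))=0$ makes every step harder than necessary and is what drives you into the delicate locally-constant bookkeeping you describe. A second concrete problem, in your part (1): the inequality $f_{i+1}^1(b)\le f_{i+1}^1(a)$ presupposes that $f_{i+1}^1$ is non-increasing, which is not among your hypotheses. It is also not needed: if $f_i^2(b)>f_i^2(a)$ then $I_i(b)$ contains the point $f_i^2(a)$ (and, when $f_i^2(a)<f_{i+1}^1(a)$, a whole subinterval of the gap between $I_i(a)$ and $I_{i+1}(a)$), and no such point lies in $G(a)=\bigcup_j(f_j^1(a),f_j^2(a))$; this contradicts $I_i(b)\subseteq G(a)$ directly, without any appeal to Corollary \ref{constant}.

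On part (2) you correctly compute that the hypotheses ($a<b$, $f_i^1$ increasing or constant, $f_i^2$ decreasing or constant) give $f_i^1(a)\le f_i^1(b)$ and $f_i^2(b)\le f_i^2(a)$, i.e.\ $I_i(b)\subseteq I_i(a)$ --- and you should have stopped there. That single line is all the paper means by ``the last assertion is immediate'', and it is this inclusion (not the one printed in the claim, whose direction is evidently a misprint) that is actually used later, in the $J_3$ clause of the computation of $G(a)\cap G(a')$, where the intersection is $I_i(a')$ for the larger point $a'$. Your attempt to salvage the printed direction --- arguing that after shrinking the endpoints ``do not actually move'' so that $I_i(a)\subseteq I_i(b)$ holds on a cofinite piece --- neither proves the printed statement (the hypotheses allow the endpoints to move strictly, in which case that inclusion is simply false) nor is needed for the sequel. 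Apart from these points, the component-matching bookkeeping you outline is essentially the paper's argument.
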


\begin{proof}
The first assertions follow from the fact that, since $\po$ is a
partial order, $G(a)\supset G(b)$ and $I_i(a)$ and $I_k(a)$ are
always disjoint intervals by construction, $I_j(b)$ must be
contained entirely in a single interval $I_k(a)$.

Either of the first two conditions imply that $j\neq k$ and the
corresponding conclusions follow. The last assertion is immediate.
\end{proof}

Using the claim, whenever $X_k$ satisfies the the assumptions of
the claim and $x\in X_k$ is generic we can partition the set
indexing $\CF^G$ as follows:

\begin{itemize}
\item $J^k_1$ is the set of integers $i$ such that both $f_i^1$
and $f_i^2$ are non-increasing in $X_k$ or both are constant.

\item $J^k_2$ is the set of integers for which both $f_i^1$ and
$f_i^2$ non-decreasing in $X_k$ but not both are constant.

\item $J^k_3$ is the elements not in $J_k^1$ nor in $J_k^2$.
\end{itemize}

\bigskip

\noindent We are now ready to conclude the proof of the theorem.

\begin{proof}[Proof of Theorem \ref{theorem}]
Let $p$ be any 1-$\CN$-type with infinite chains. We have to show that every $e\models p$ is contained in a finite by o-minimal set.  Because the $X_i$ in
the decomposition of $X$ are $\CM$-definable all realizations
of $p$ are in the same $X_i$; consequently $X_i$ has infinite
$\po$-chains. Therefore, without loss of generality, $X_i$
satisfies the assumptions of Claim \ref{J1J2}; for simplicity
assume $X_i=X_1$.

By the continuity of the functions in $\mathcal F$ and using
Corollary \ref{constant}, for generic $a\in X_1$ there exists
$a_{\epsilon} > a$ with $a_\epsilon\in X_1$ such that for all
$a'\in (a,a_{\epsilon})\cap {X_1}$ we have

\[G(a)\cap G(a') = \bigcup_i I_i(a)\cap I_i(a')\] where $I_i(a) =
(f_i^1(a), f_i^2(a))$.

Moreover, keeping the decomposition of the index set of $\CF^G$ obtained
above, we get that such  $a<a'$ satisfy:

\[G(a)\cap G(a'): = \bigcup_{i\in J_1} (f_i^1(a),f_i^2(x)) \cup
\bigcup_{i\in J_2} (f_i^1(x),f_i^2(a)) \cup \bigcup_{i\in J_3}
(f_i^1(a'),f_i^2(a'))\] and the definition of the $J_i$ implies
that $G(a)\cap G(a')\supseteq G(a)\cap G(a'')$ if and only if and
only if $a'<a''$ for all $a<a'<a''<a_{\epsilon}$ with $a',a''\in
X_1$.

Setting $x <' y$ if and only if $G(a)\cap G(x)\supseteq G(a)\cap
G(y)$ we know that $<'$ is a $\po$-definable quasi order agreeing
with $<$ on $(a,a_{\epsilon})\cap X_1$. So the theorem now follows
from Lemma \ref{from finite to total} and the fact that the linear
order $<'$ can be uniformly defined in a set containing any
generic $e\in X$ such that $e\in X_k$ for some $X_k$ a cell in the
representation of $X$ containing infinite $\po$-chains.
\end{proof}

\subsection{Further remarks}

We conclude with the following observation:

\begin{definition}
A structure $\mathcal N$ is \emph {definable in an o-minimal
structure} $\CM$, if it is interpretable in the real sort of $\CM$
(i.e. the universe of the underlying interpretation is definable).

We will define $N$ to be $k$-dimensional if $k$ is the smallest
integer such that there exists an o-minimal structure $\CM$ and a
definable $S\subseteq M^r$ with $\dim_{\CM}S = k$ such that there
is an interpretation of $\mathcal N$ (coming from $\mathcal M$)
with universe $S$.
\end{definition}

\begin{corollary}\label{last corollary}
Let $\mathcal N$ be an unstable structure definable in an
o-minimal structure, and let $\CM$ witness that $\mathcal N$ is
1-dimensional. Then $\mathcal N$ interprets an o-minimal structure
definable in $\CM$.
\end{corollary}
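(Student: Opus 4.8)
The plan is to combine Theorem \ref{shelah} with Theorem \ref{theorem} to produce the desired o-minimal structure. First I would use the hypothesis that $\mathcal M$ witnesses $\mathcal N$ is $1$-dimensional: this means we may take the universe of $\mathcal N$ to be an $\mathcal M$-definable set $N\subseteq M^r$ with $\dim_{\mathcal M} N = 1$, and after applying an $\mathcal M$-definable finite-to-one map (coming from o-minimal cell decomposition together with elimination of imaginaries inside the o-minimal structure, or simply working in $\mathcal M^{eq}$) we may in fact assume $N\subseteq M$. Since $\mathcal N$ is unstable, Theorem \ref{shelah} gives an $\mathcal N$-definable (hence $\mathcal M$-definable) partial quasi order $\leq_p$ on some $X\subseteq N$ with infinite chains; quotienting by the equivalence relation $a\leq_p b\wedge b\leq_p a$ (which has finite classes after passing to a definable subset, exactly as in the ``Taming'' subsection) we may assume $\leq_p$ is a genuine partial order on $X$ with infinite chains, $X$ infinite and $\mathcal M$-definable with $\dim_{\mathcal M}X=1$.

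Next I would pick a type $p\in S_1(M)$ extending ``$x\in X$'' with infinite $\leq_p$-chains — such a $p$ exists by saturation of $\mathcal M$ and compactness, since $\leq_p$ has infinite chains — and apply Theorem \ref{theorem} directly. That theorem hands us, for a generic realization $e\models p$, a $\leq_p$-definable infinite set $X'\subseteq X$ containing $e$ in its interior together with a linear order $<'$ on $X'$ with $(X',<')\equiv(X',<)$. Crucially $(X',<')$, being elementarily equivalent to a subset of the dense o-minimal $\mathcal M$ with its induced order, and being equipped with all structure induced from $\mathcal M$ — hence in particular all structure induced from $\mathcal N$, since $\mathcal N$ is definable in $\mathcal M$ — is o-minimal: every $\mathcal M$-definable (a fortiori every $\mathcal N$-definable) subset of $X'$ is a finite union of points and $<'$-intervals, because this holds for $X$ inside the o-minimal $\mathcal M$ and $<'$ agrees with the ambient order up to the elementary equivalence. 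Moreover $X'$ is $\leq_p$-definable and $\leq_p$ is $\mathcal N$-definable, so $X'$ together with $<'$ is an $\mathcal N$-definable (equivalently, interpretable) o-minimal structure, and since everything took place inside $X\subseteq N\subseteq M$ it is definable in $\mathcal M$ as required.

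The one genuine subtlety — and the step I expect to be the main obstacle — is the reduction to $N\subseteq M$, i.e. getting from a $1$-dimensional $\mathcal M$-definable universe $N\subseteq M^r$ to an honest subset of the real line, and checking that the o-minimality of $(X',<')$ survives all the finite-to-one identifications made along the way. This is handled by the remark in the ``Taming'' subsection (identifying each finite equivalence class with a canonical representative, which is possible after passing through $\mathcal M^{eq}$ or, in the definable setting, by o-minimal cell decomposition giving definable Skolem functions on the quotient) and by the observation that a finite-to-one $\mathcal M$-definable image of a $1$-dimensional set in $M^r$ lands, cell by cell, in $M$ up to a definable homeomorphism; the induced-structure bookkeeping is routine once one notes that ``finite by o-minimal'' is exactly the notion designed to absorb these finite fibres, and that Definition (the first in the paper) packages $X/E$ with a linear order into an o-minimal structure. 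Thus the corollary follows by quoting Theorems \ref{shelah} and \ref{theorem} and the first definition, with only the dimension-one normalization requiring the cell-decomposition argument sketched above.
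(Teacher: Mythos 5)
Your proposal is correct and follows essentially the same route as the paper: invoke Theorem \ref{shelah} to get a definable quasi order with infinite chains on $N$, discard the (finitely many, by $1$-dimensionality) infinite classes of $a\leq_p b\wedge b\leq_p a$, quotient by the remaining finite classes, and conclude by Theorem \ref{theorem}. The paper's own proof is exactly this three-step argument, leaving the reduction to $N\subseteq M$ and the induced-structure bookkeeping implicit in Theorem \ref{theorem}, just as you do; your additional remarks on the cell-decomposition normalization merely make that step more explicit.
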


\begin{proof}
Since $\mathcal N$ is definable in an o-minimal structure it does
not have the independence property. Hence, by theorem \ref{shelah}
there is a definable quasi order with infinite chains whose
universe is (a subset of) $N$. Let $a\sim b \iff a\po b \land b\po
a$. By o-minimality and the fact that $\po$ has infinite chains,
we get that $\sim$ has only finitely many infinite classes (and
infinitely many finite classes). Passing to the definable subset
$N'$ of those elements whose $\sim$-classes are finite the
structure $N/\sim$ is definable in the same o-minimal structure.
The conclusion now follows from Theorem \ref{theorem}.
\end{proof}

\begin{remark}\label{wstable}
It should probably be pointed out that Theorem 1.2 cannot be
strengthened to assure that given a (weakly) unstable type $p$
(i.e. a type that is not strongly stable) there is a definable set
contained in $p$ which is finite by o-minimal. The following
example is due to Kobi Peterzil. Consider the structure $\mathcal
R$ consisting of the (unordered) group $(\mathbb R,+)$ expanded by
a predicate for the interval $[0,1]$. For $0 \le r,t \le  n \in
\Nn$ the formula $\psi_n(r,t):=\forall z (z+r \in [0,n] \to z+t
\in [0,1])$ defines a linear order on $[0,n]$.  Using a simple
quantifier elimination argument it is not hard to verify that no
unbounded linear order is definable in this structure. Now
consider the type $p:=\{\neg (\psi_n(x-r,0) \lor
\psi_n(0,x-r))\mid r\in \mathbb R\}$. $p$ is weakly unstable, but
no formula in $p$ is finite by o-minimal.

Note, however, that $p$ is a stable type according to Shelah (any
Morley sequence in $p$ -- this is well defined because $p$ is
definable -- is an indiscernible set). More specifically, denote
$\le_n$ the order on $[-n,n]$ defined above. Take $\{a_i\}_{i\in
\omega}$ such that $a_0=0$, $a_i-a \not \le_n 0$ for all $a \in
\mathbb R\left\langle a_1,\dots, a_{i-1}\right\rangle $ (the group
generated by $\mathbb R$ and $a_1,\dots, a_{i-1}$). It is not hard
to verify that $\{a_i\}_{i>0}$ is an indiscernible set, and
witnesses the stability of $p$. Note that $p$ does not have
$\ur$-rank, since every non algebraic forking extension thereof is
unstable. It is, however, regular and locally modular.
\end{remark}

\section{Finding a set of smaller dimension.}\label{dimdown}
Keeping in mind the goal of interpreting an o-minimal order in any
unstable structure definable in an o-minimal theory, it is natural
to pursue an inductive argument based on the o-minimal dimension
of the interpretation. Having proved the desired result for the
1-dimensional case, the next step is, given an $n$-dimensional
$\CN$, definable in an o-minimal structure $\CM$, to find an
$\CN$-definable $X\subseteq N$ with $\dim_{\CM}X<n$. This is the
goal of the present section.

Let $\CN,\CM$ be as above with $N\subseteq M^k$. By Shelah's
theorem (Theorem \ref{shelah}) there is an $\CN$-definable quasi
order $\po$ with infinite chains on $N$. For simplicity we may
assume that $\CN = (N,\po)$. We will show that if $N$ is
$\CN$-minimal, i.e. every non-algebraic $\CN$-definable subset of $N$ has
dimension $n$, then $n=1$.

As the next remark shows, if $N$ is $\CN$-minimal it has an
intrinsic notion of dimension. To avoid confusion the use of the term ``dimension'' will
be reserved exclusively for the o-minimal dimension, and ``generic''
will always mean ``$\CM$-generic'' (over the relevant data). To simplify things, we may assume that
$N$ and $\po$ are $\0$-definable in $\CM$. The following appears already in
\cite{PePiSt2}, but we give the simple proof:

\begin{remark}\label{exchange}
Let $\CN$ be a structure definable in an o-minimal structure
$\CM$. If for all $a\in N$ either $\tp(a/A)$ is algebraic or
$\mdim(\tp(a/A))=n$ then $\CN$ is a geometric structure, i.e. the
model theoretic algebraic closure $\acl_{\CN}(\cdot)$ satisfies
the Exchange Property and $\CN$ eliminates the quantifier
$\exists^{\infty}$.
\end{remark}
\begin{proof}
Everything is clear, except exchange. So let $a\in
\acl(Ab)\setminus \acl(A)$. We have to show that $b\in \acl(Aa)$.
If $\dim\tp(b/Aa)<n$ it must be algebraic and the proposition
follows. Otherwise, possibly replacing $b$ we may assume that
$\dim(b/Aa)=n$. Since $a\notin \acl(A)$ we know that $a\nind_A b$
(in the o-minimal sense). Therefore, $b\nind_A a$ implying that
$\dim\mtp(b/Aa) < n$ which contradicts our assumptions.
\end{proof}

\noindent \emph{Throughout this section we will assume that $N$ is
$\CN$-minimal, so in particular every infinite $\CN$-definable
$S\subseteq N$ contains an $\CM$-generic point}. To simplify the
exposition we will assume by way of contradiction that $\dim_{\CM}
N = n > 1$.

\bigskip

Very much like in the 1-dimensional case, the proof goes through
finding an $\CM$-definable set $X$, and an $\CN$-definable
(partial quasi) order $\pot$ such that the restriction of $\pot$
to $X$ is linear. We start with some definitions and results
preparing the ground for what follows. We keep the notation of the
previous section.

\begin{definition}
Let $\CM$ be an o-minimal structure and $S\subseteq X\subseteq
M^k$ definable subsets, $X$ definable over $\0$. $S$ is
\emph{generically closed in $X$} if for every $b\in X$ generic
over $\0$, if $b\in \cl_X (S)$, then $b\in S$.
\end{definition}

Note that the above definition is meaningful only for sets $S$
which are not $\emptyset$ definable. Throughout this section, by
"$S$ is generically closed" we will mean that $S$ is generically
closed in $N$.

\begin{lemma}\label{close section 2}
Suppose $\CN:=(N,\po)$ is an $\CM$-definable structure such that
$N$ is $\CN$-minimal with $\dim_{\CM} N =n$. Let $Z\subseteq N$ be
any $\CN$-definable set and $\pot$ the (partial) quasi order
defined by

\[a\pot b \Leftrightarrow \left|\left\{ y\in Z \left|\right. y\po
a \wedge y\not\po b\right\} \right|<\infty. \]

Then for any $a\in N$ the set $\{x\mid x\pot a\}$ is
generically closed in $N$.
\end{lemma}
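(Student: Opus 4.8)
The plan is to argue by contradiction: suppose $b \in N$ is generic over $\emptyset$ (and over $a$), that $b \in \cl_N(\{x \mid x \pot a\})$, but $b \not\pot a$. Spelling out the definition, $b \not\pot a$ means that the set $W := \{y \in Z \mid y \po b \wedge y \not\po a\}$ is infinite, hence — by o-minimality applied inside the $\CM$-definable ambient structure — has dimension $\geq 1$; in fact since $N$ is $\CN$-minimal and $W$ is $\CN$-definable (over $a,b$), $W$ has dimension $n$, so $W$ contains a point $w$ generic over $ab$. I would first record this: the failure of $b \pot a$ forces a \emph{large} (full-dimensional) witness set $W$, and I can pick $w \in W$ with $\dim \tp(w/ab) = n$.

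\textbf{Key step.} The heart of the argument is to transfer this full-dimensional witness from $b$ to nearby points of $\{x \mid x \pot a\}$ using continuity, contradicting that $b$ is in the closure of that set. Since $b \in \cl_N(\{x \mid x \pot a\})$, every $\CM$-neighbourhood of $b$ meets $\{x \mid x \pot a\}$. The conditions ``$w \po b$'' and ``$w \not\po a$'' are, for fixed generic $w$, $\CM$-definable conditions on the first coordinate; by o-minimality (cell decomposition / monotonicity of the defining data, exactly as in the one-dimensional Lemma~\ref{closed G} argument) the set $\{z \in N \mid w \po z\}$ is, locally near a generic point, either open or relatively closed, and similarly for the condition defining $W$ as $b$ varies. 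The goal is to produce, for \emph{each} point $w$ in a full-dimensional subset $W_0 \subseteq W$, an $\CM$-neighbourhood $U_w$ of $b$ such that $w \po z$ and $w \not\po a$ continue to hold for all $z \in U_w$. Taking $w$ generic over $b$ and intersecting appropriately (or rather, running the dependence the other way: fixing $z$ close to $b$ and varying $w$), one gets that for $z$ in some neighbourhood of $b$ the set $\{y \in Z \mid y \po z \wedge y \not\po a\}$ still contains a full-dimensional, hence infinite, subset — so $z \not\pot a$ for all $z$ near $b$. This contradicts $b \in \cl_N(\{x \mid x \pot a\})$.

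\textbf{Making the continuity rigorous.} To make the previous paragraph precise I would work with a cell decomposition of $N^2$ (or $N^3$, tracking $a$ as a parameter) adapted to the relation $\po$, so that the truth value of $y \po z$ on each cell varies continuously; the relation $y \not\po a$ is then a condition cutting out a set which, near the generic point $w$, is a union of cells. Because $w$ is generic over $b$, the fibre over $b$ of the cell containing $(w,b)$ has generic-over-$b$ dimension $n$ in the $w$-variable, and the cell is open in the $z$-direction at $b$ (if it were lower-dimensional in that direction we could, by genericity of $b$ and a dimension count as in Remark~\ref{exchange}, derive $b \in \acl(aw) \subseteq \acl$ of a small set, contradicting $\dim \tp(b/\emptyset) = n$). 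This openness in the $z$-direction is exactly what lets us slide $b$ to a nearby $z \in \{x \mid x \pot a\}$ while keeping the witness $w$.

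\textbf{Main obstacle.} The delicate point is the interplay between the two quantifiers: ``$y \po z$ holds'' must be preserved as $z$ moves, while ``$y \not\po a$'' is about a \emph{different} variable and we need it to hold for a whole full-dimensional family of $y$'s simultaneously, not just one. The clean way around this is to fix the generic witness $w$ \emph{first}, note that the set of $z$ with $w \po z \wedge w \not\po a$ is $\CM$-definable and contains $b$, use o-minimality / Fact~\ref{assuming closed G}-style arguments to see it is somewhere locally ``large'' near $b$ (open, or at least not thin, in the direction along which $\{x \mid x\pot a\}$ approaches $b$), and thereby find $z \in \{x\mid x \pot a\}$ arbitrarily close to $b$ with $w \po z$ and $w \not\po a$. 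Then a single such $w$ already fails the infiniteness requirement only if $\{y \in Z\mid y\po z \wedge y \not\po a\}$ is finite — so one must instead transport an infinite subfamily; this is handled by taking $w$ generic and observing the construction is uniform in $w$, so the transported witness set is again full-dimensional. I expect assembling these uniformity and genericity bookkeeping details correctly — rather than any single hard idea — to be the main work.
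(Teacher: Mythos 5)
Your overall strategy is the same as the paper's: assume a generic $b$ with $b\not\pot a$, use $\CN$-minimality to see that the witness set $W=\{y\in Z\mid y\po b\wedge y\not\po a\}$ has dimension $n$, and then transfer witnesses to a whole neighbourhood of $b$ to conclude that no point near $b$ lies in $\{x\mid x\pot a\}$. But the step you defer as ``uniformity and genericity bookkeeping'' is precisely where the content of the proof lies, and as written your argument does not close. Transferring one generic witness $w$ at a time (``$w\po z$ holds for all $z$ in some neighbourhood $U_w$ of $b$'') gives neighbourhoods that depend on $w$, and there is no reason for infinitely many of them to share a common interior; saying ``the construction is uniform in $w$'' is an assertion, not an argument, and it cannot be made uniform by genericity alone because $W$ itself is defined from $b$. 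The paper's fix is a specific device: since $\dim W=n$, choose a single open set $U\subseteq Z\cap L(b)$ with $U\cap L(a)=\emptyset$ whose defining parameters are independent from $b$ over the data, and transfer the \emph{single} formula ``$U\subseteq L(x)\cap Z$'' (which holds at $x=b$ and whose parameters do not include $a$) to a neighbourhood $V$ of $b$; this carries infinitely many witnesses simultaneously, so $x\not\pot a$ for every $x\in V$. Some such packaging of the witnesses into one definable object with independent parameters is needed, and it is absent from your proposal.

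Second, your genericity bookkeeping is off in a way that matters. The definition of ``generically closed'' only gives $b$ generic over $\emptyset$, not over $a$; your parenthetical assumption that $b$ is generic over $a$ (and your transfer of the $a$-dependent condition, justified by ``otherwise $b\in\acl(aw)$, contradicting $\dim\tp(b/\emptyset)=n$'') is not available. Note that $b\in\acl(aw)$ is perfectly compatible with $\dim\tp(b/\emptyset)=n$, e.g.\ when $b$ is interalgebraic with $a$ --- and this is exactly the situation in the paper's application of the lemma in Theorem \ref{dimension down}, where the point in the closure is $m(y)$, definable from $y$ and the line, hence far from generic over the cone's parameter. The paper avoids ever needing genericity over $a$ by arranging that $a$ does not occur in the formula being transferred ($U\cap L(a)=\emptyset$ is a fixed fact about $U$, checked once); in your setup the same repair is possible, since ``$w\not\po a$'' does not involve the moving variable $z$, but you should say so rather than assume genericity over $a$. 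Finally, the paper also counts the verification that $\pot$ is transitive (via $L(c)\setminus L(a)\subseteq(L(c)\setminus L(b))\cup(L(b)\setminus L(a))$) as part of this lemma; your proposal omits it, though that part is a one-line argument.
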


\begin{proof}

The lemma is trivial if $Z$ is finite, so we assume it not to be
the case.  Notice that in order to prove the lemma we must show
first that $\pot$ is in fact a quasi order and second that the
cones it defines are generically closed.

To prove it is a quasi order, let $a,b,c$ be elements in $N$ such
that $a\pot b\pot c$. Setting $L(x):=\{y \mid y\po x\}$ we know that both $(L(b)\setminus L(a)) \cap Z$
and $(L(c)\setminus L(b))\cap Z$ are finite.

But

\[L(c)\setminus L(a)\subset \left(L(c)\setminus L(b)\right)\cup
\left(L(b)\setminus L(a)\right)\] so $(L(c)\setminus L(a))\cap Z$ is
finite and by definition $a\pot c$ so $\pot$ is transitive.

To prove that $L_t(a)$ is generically closed, suppose that
$c\not\pot a$ for some $c$ generic over $\0$. By definition there
are infinitely many points in $Z\cap (L(c)\setminus L(a))$. Since
this last set is $\CN$-definable we know by assumption that it has
dimension $n$ so there is an open set $U\subseteq Z\cap L(c)$ such
that $U\cap L(a)=\0$. Choosing $U$ small enough, we may assume
that it is definable over parameters independent over all the
data, so by genericity of $c$ there is a neighbourhood $V$ of $c$
such that $U\subseteq L(x)\cap Z$ for all $x\in V$ whence $x\not
\pot a$ as required.
\end{proof}

\noindent It will be important for applications to note that in the above lemma we do not assume that $Z$ is
$\0$-definable.

\medskip

Out next step is to find an $\CN$-definable order $\pot$ and
an $\CM$-definable line $l$ through $\CN$ (see below) such that $\pot$
restricted to $l$ is a linear order. Our way of obtaining this is
reminiscent of Shelah's proof of Theorem \ref{shelah}.

\bigskip

\noindent The following technical result will be needed:

\begin{proposition}\label{random crap}

Let $\CN:=(N,\po)$ be a partially ordered set with infinite chains
definable in an o-minimal structure $\CM$ and $\dim_{\CM} N =n$.
Assume also that $N$ is $\CN$-minimal. Then:

\begin{enumerate}
\item There is an infinite $\po$-chain $a_0\po \dots \po a_i\po
a_{i+1}\po \dots$ of elements in $N$ such that $a_i$ is generic
for all $i$.

\item There is an $\CM$-generic type $p(x)\in S_1(\0)$ and
an infinite $\po$-chain $a_0\po \dots \po a_i\po a_{i+1}\po \dots$
such that $\models p(a_i)$ for all $i$.

\item There is a type $p(x)\in S_1(\0)$  and elements $a,b\models
p$ such that $b$ is a generic element in $\partial G(a)$.
\end{enumerate}
\end{proposition}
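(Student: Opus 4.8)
The plan is to build the chain of generics greedily, pushing all the ``loss of genericity'' into a single formula that o-minimality forbids from having infinite chains. First I would observe that since $\po$ has infinite chains, some formula $\theta(x,y)$ witnessing $y \po x$ has infinite chains uniformly; the set $A$ of elements lying on an infinite $\po$-chain is $\CN$-definable, hence either finite or of dimension $n$ by $\CN$-minimality. It is clearly infinite (any element of an infinite chain lies in it), so $\dim_\CM A = n$ and $A$ contains an $\CM$-generic point. This will be used to start the chain.

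For part (1): I would construct $a_0 \po a_1 \po \dots$ by recursion, at stage $i$ choosing $a_{i+1}$ generic over $a_0,\dots,a_i$ inside the set $G(a_i) \cap A_i$, where $A_i$ is the ($\CN$-definable, parameters in $a_{\le i}$) set of points lying above $a_i$ on an infinite $\po$-chain contained in $G(a_i)$. The point is that $A_i$ is infinite (it contains a tail of the witnessing chain through $a_i$), so by $\CN$-minimality applied relative to these parameters it has dimension $n$ and contains a point generic over $a_{\le i}$; pick $a_{i+1}$ to be such a point. Then each $a_{i+1}$ is generic over $a_{\le i}$ and in particular $\CM$-generic, and the $a_i$ form a $\po$-chain by construction. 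One subtlety: $\CN$-minimality as stated is about $\CN$-definable sets over arbitrary parameters (Remark \ref{exchange} gives exchange, so dimension behaves well), and I would spell out that $A_i$ is indeed $\CN$-definable with the right parameters — this is where care is needed, since $G(a_i)$ and ``lies on an infinite chain inside $G(a_i)$'' must be expressed by $\po$-formulas over $a_{\le i}$.

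For part (2): given the chain $\langle a_i \rangle$ from (1), each $a_i$ realizes some complete type over $\emptyset$ in the $\CM$-sense; there are only finitely many $\CM$-generic $1$-types over $\emptyset$ (o-minimality: generic $1$-types over $\emptyset$ correspond to the finitely many $\emptyset$-definable cells of top dimension, or more precisely each is determined by which side of each $\emptyset$-definable point and which $\emptyset$-definable cell it lies in — in any case finitely many). So by pigeonhole infinitely many $a_i$ realize the same generic $p \in S_1(\emptyset)$; passing to that subsequence (still a $\po$-chain by transitivity) gives (2).

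For part (3): take the chain $\langle a_i \rangle$ realizing the fixed generic $p$ from (2). I would consider $b := a_1$ and $a := a_0$; we have $a \po b$, i.e. $b \in G(a)$. If $b$ is already generic in $\partial G(a)$ we are done; the issue is that $b$ is generic in $N$, hence generic in $\Int G(a)$, not on the boundary. The fix is to move along the chain and use that the ``cones'' $G(a_i)$ are strictly nested among generics: consider the $\CN$-definable set $\partial G(a_0)$ — if it has dimension $n$ then by $\CN$-minimality it has a generic point $b'$, and $\tp_\CM(a_0)$ together with $\tp_\CM(b')$ over $\emptyset$ can be arranged to both equal $p$ after adjusting (using that $G(a_0) \cap A$ is still large and that a generic of $\partial G(a_0)$ can be taken generic over $a_0$, forcing it to realize a generic $1$-type, which we may assume is $p$ by the pigeonhole already performed — or by repeating the construction starting the chain at a point whose type is $p$ and whose $G$-boundary has a $p$-generic). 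If instead $\dim \partial G(a_0) < n$, then $\dim(G(a_0) \setminus G(a_1)) \le \dim(\partial G(a_0) \cup \partial G(a_1)) < n$ for generic $a_1$, which would say $a_0 \po^0 a_1$ fails to be strict in a strong sense and ultimately that the $\po^0$-chain through generics collapses — I expect one shows this contradicts (1) (the generic chain from (1), after passing to $\po^0$ via Lemma \ref{closed G}-type reasoning, would have to be eventually constant).

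\textbf{Main obstacle.} The delicate point is part (3): ensuring the generic boundary point $b \in \partial G(a)$ can be taken to realize \emph{the same} $\emptyset$-type $p$ as $a$. Getting $a$ and $b$ into the same type while keeping $b$ on the boundary of $G(a)$ requires either a fibered/definable-family argument (the family $\{\partial G(x) : x \models p\}$ has generic fibers of dimension $n$, so a generic of the total space projects to a $p$-point and has a $p$-point in its fiber — after checking the fiber generic also realizes $p$) or the dichotomy on $\dim \partial G(a)$ above. I would present it via the definable family: let $W = \{(x,y) : x \po y,\ y \in \partial G(x)\}$; its projection to the first coordinate is $\{x : \partial G(x) \text{ is large}\}$, an $\CN$-definable set which I must show is infinite (equivalently generic), and then a generic element of $W$ gives the pair — the fact that both coordinates then realize generic $1$-types, which we may assume are $p$, follows as in (2). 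Showing that $\{x : \dim \partial G(x) = n\}$ is infinite is exactly where one uses that $\po^0$ genuinely refines $\po$ on a chain of generics (Lemma \ref{closed G} and part (1)): if $\partial G(x)$ were small for all generic $x$, then $G(x)$ and $G^0(x)$ would agree generically and the generic $\po$-chain would become a generic $\po^0$-chain with relatively closed cones, which one argues cannot be infinite among generics in an $n$-dimensional $\CN$-minimal set. That last step is the crux and is where I would spend the real effort.
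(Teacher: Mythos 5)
Your part (1) has a definability gap: the sets you call $A$ and $A_i$ (``points lying, above $a_i$, on an infinite $\po$-chain contained in $G(a_i)$'') are not first-order $\CN$-definable --- lying on an infinite chain is only expressible as a type (a conjunction over all finite lengths) --- so $\CN$-minimality cannot be applied to them; and even granting a definable surrogate, a generic point of $A_i$ may sit at the top of every infinite chain through it, so the greedy recursion can stall at the next stage. The paper avoids both problems with a compactness trick you are missing: from one infinite chain extract, by compactness, a chain indexed by $\omega\times\omega$ in the lexicographic order, set $c_i:=x_{(i,0)}$, and observe that each interval $(c_i,c_{i+1})_{\po}$ is a manifestly $\CN$-definable infinite set, hence of dimension $n$ by $\CN$-minimality, hence contains an $\CM$-generic $a_i$; these $a_i$ chain up through the markers $c_i$ by transitivity. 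In part (2) your pigeonhole rests on the claim that there are only finitely many $\CM$-generic $1$-types over $\0$, which is false: already in a real closed field there are infinitely many non-algebraic $1$-types over $\0$ (one for each cut in $\dcl(\0)$), and here the relevant types are types of $k$-tuples anyway. The conclusion is still true, but the correct route --- the paper's --- is the standard Ramsey-plus-compactness extraction of an arbitrarily long, then infinite, $\po$-chain of generics all realizing one complete type.

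Part (3) is where your proposal is genuinely incomplete, as you acknowledge (``I expect one shows\dots'', ``where I would spend the real effort''), and it also applies $\CN$-minimality to $\partial G(a_0)$, which is only $\CM$-definable (the boundary is not cut out by a $\po$-formula), so the dichotomy on $\dim\partial G(a_0)$ is not licensed as stated. The paper's argument is different and short: using (2), choose $a\models p$ with realizations of $p$ both $\po$-above and $\po$-below it, so that $G(a)\cap\phi(N)$ and $\neg G(a)\cap\phi(N)$ both have dimension $n$ for every $\phi\in p$; the fact quoted from \cite{johns} then gives $\dim\bigl(\partial G(a)\cap\phi(N)\bigr)=n-1$ for every $\phi\in p$, and a single compactness argument produces $b\in\partial G(a)$ realizing $p$ with $\dim(b/a)=n-1$, i.e.\ generic in the boundary. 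Your ``definable family $W$'' idea gestures in a related direction, but without the boundary-dimension fact and the final compactness step it does not close, and the claim that one can ``assume'' the generic boundary point realizes the same $p$ is exactly the point that needs proof.
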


\begin{proof}

$\left.\right.$

\noindent (1). By assumption we have an infinite $\po$-chain so by
compactness we can find a sequence $\langle
x_{(i,j)}\rangle_{(i,j)\in \omega\times \omega}$ where
$x_{(i_1,j_1)}\po x_{(i_2,j_2)}$ if and only if
$(i_1,j_1)<(i_2,j_2)$ in the lexicographic order. Let
$c_i:=x_{(i,0)}$; by definition $(c_i, c_{i+1})_{\po}$ is infinite
for any $i\in \omega$.

By $\CN$-minimality $\dim (c_i, c_{i+1})_{\po} = n$ so there is
some $\CM$-generic $a_i\in (c_i, c_{i+1})_{\po}$. Then $\langle
a_i\rangle$ is an infinite $\po$-chain of $\CM$-generic elements.

\noindent (2). Using (1) and compactness we can find an
arbitrarily long $\po$-chain of $\CM$-generic elements. By Ramsey's Theorem
we can find a $\po$-chain $\langle a_i\rangle$ of $\CM$-generic
elements such that $\tp(a_i/\emptyset)=\tp(a_j/\emptyset)$ for all
$i,j$.

\noindent (3). By (2) there is an $\CM$-generic type $p(x)$ such
that there are infinite $\po$-chains among realizations of $p(x)$.
Let $a\models p(x)$ so that for any $\phi(x)\in p(x)$ both
$G(a)\cap \phi(N)$ and $\neg G(a)\cap \phi(N)$ have dimension $n$.
By \cite{johns} $\dim \partial G(a)\cap \phi(N) = n-1$ for any
$\phi(x)\in p(x)$. By compactness there exists $b\in
\partial G(a) \cap p$  such that $\dim(b/a)=n-1$. The elements
$a,b$ will satisfy the requirements of (3).
\end{proof}

A \emph{line through $N$} is a 1-dimensional ($\CM$-definable)
definably connected subset of $N$. Say that a line $l$ through $N$
is \emph{generic} if any generic $a\in l$ is generic also (over
$\0$) in $N$. So a line $l$ through $N$ is definably homeomorphic
to an interval in $\CM$. Fixing such a homeomorphism, $l$ inherits
an ordering form $\CM$. Throughout this section, we will assume
implicitly that lines come equipped with some such ordering. The
only requirement we will make is that when working with a family
of lines the ordering on all lines is given uniformly.

Let $l$ be a line through $N$. Given $b\in X$ there
are unique maximal closed intervals $I_0, I_1, \dots , I_n$ such
that

\begin{enumerate}
\item The right endpoint of $I_j$ is the left endpoint of
$I_{j+1}$.

\item $l=\bigcup_{i} I_i$

\item Either $I_j=\overline{I_j\cap G(b)}$ or
$I_j=\overline{I_j\setminus G(b)}$.

\end{enumerate}

Whenever these conditions hold we will say that $\cupI$ (or
$\cupI(b)$ if we want to make $b$ explicit) is the partition of
$G(b)\cap l$ (or the partition of $l$ with respect to $b$).

If $\bigcup_{i=1}^r I_i$ is a partition of $G(b)\cap l$ for fixed
$l$ and $b$ let $f_l^b:r\rightarrow 2$ be such that $f_l^b(i)=1$
if $I_j=\overline{I_j\cap G(b)}$ and $f_l^b(i)=0$ otherwise; let

\[s_l(b):=\langle f_l^b(i)\rangle.\] We will say that $s_l(b)$
has a sign change at $j$ if $f(j)=0$ and $f(j+1)=1$ or vice versa.

For a fixed line $l$ say that $\cupI$ is locally constant at $b$
if for every open neighbourhood $U$ of $b$ the set
\[\left\{ y \left|\right. \text { $\cupI$ is a partition of
$G(y)\cup I$} \right\}\] contains an open subset of $U$. We will
say that $\cupI$ is locally constant if it is locally constant at
some $b\in X$.

Finally, given any family $\mathcal{B}$ of pairs $(l,b)$ as above,
we will say that the sequence $s_l(b)$ (originating from the
partition of $l$ with respect to $b$) is \emph{maximal} in
$\mathcal{B}$ if it has a maximum number of sign changes (i.e.
$|s_l(b)|$ is maximal) among all $s_{l'}(b')$ with $(l',b')\in
\mathcal{B}$; we will call $s_{l}(b)$ the \emph{type} of the
partition $G(b)\cap l$.

\begin{claim}\label{finite}
Fix a line $l$. Then $l$ admits only finitely many locally
constant partitions.
\end{claim}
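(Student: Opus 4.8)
\textbf{Proof proposal for Claim \ref{finite}.}

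The plan is to bound the number of locally constant partitions of a fixed line $l$ by a uniform constant coming from o-minimality, using the fact that each locally constant partition is ``witnessed'' on an open set. First I would recall that $l$ is definably homeomorphic to an interval in $\CM$, so via the fixed homeomorphism we may think of $l$ itself as an $\CM$-definable interval, and of each $G(b)\cap l$ as an $\CM$-definable subset of that interval whose finitely many definably connected components are the (interiors of the) intervals $I_i$ of the partition $\cupI(b)$; here the uniform bound on the number of such components (coming from o-minimality applied to the definable family $\{G(y)\cap l\mid y\in N\}$) gives a uniform bound, say $R$, on the length $|s_l(b)|$ of the associated sign sequence and hence on the number of sign changes. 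So there are only finitely many \emph{possible} sign sequences $s$ that can arise, and for each the endpoints of the intervals $I_i$ are points of $l$; the content of the claim is that only finitely many actual configurations of endpoints can occur for locally constant partitions.

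Next I would fix a sign sequence $s$ of length $r\le R$ and consider the $\CM$-definable set $W_s\subseteq l^{\,r-1}$ of tuples $(e_1<\dots<e_{r-1})$ of interior breakpoints such that the partition of $l$ determined by $s$ and these breakpoints is locally constant at some $b$, i.e.\ such that the set $D_{s,\bar e}:=\{y\in N\mid \text{the given intervals form a partition of }G(y)\cap l\}$ has nonempty interior in $N$. By o-minimality (uniform finiteness / definability of the family $\{D_{s,\bar e}\}_{\bar e}$ in the parameter $\bar e$), the condition ``$\Int(D_{s,\bar e})\neq\emptyset$'' is $\CM$-definable in $\bar e$, so $W_s$ is an $\CM$-definable subset of $l^{\,r-1}$. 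The key point is then that $W_s$ is \emph{finite}: if $W_s$ were infinite it would contain an interval's worth of tuples $\bar e$, and since the interiors $\Int(D_{s,\bar e})$ are nonempty and (again by o-minimality, shrinking if necessary) vary definably with $\bar e$, we could pick for each such $\bar e$ a point $y_{\bar e}\in\Int(D_{s,\bar e})$ with $\dim(y_{\bar e}/\bar e)=n$, producing a point $y$ of $N$ for which $\bar e$ is not determined over $y$ — but the breakpoints of the partition of $G(y)\cap l$ are exactly the endpoints of the definably connected components of the $\CM$-definable set $G(y)\cap l$, hence lie in $\dcl_{\CM}(y)\cup\{\text{endpoints of }l\}$, so only finitely many $\bar e$ are compatible with a given $y$, a contradiction to $\bar e\not\in\acl_{\CM}(y)$ for infinitely many $\bar e$ over a single generic $y$. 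Thus each $W_s$ is finite, and since there are finitely many $s$, there are only finitely many locally constant partitions of $l$.

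I would expect the main obstacle to be the bookkeeping in the previous paragraph: making precise that ``locally constant at some $b$'' is a $\CM$-definable condition on the breakpoint tuple $\bar e$ (this needs definability of the family $\{D_{s,\bar e}\}$ in $\bar e$, which is routine from the definability of $\po$ and of $l$), and then running the dimension count cleanly — one must be careful that the relevant $y$ witnessing local constancy can be chosen generically, which is exactly where $\CN$-minimality (every infinite $\CN$-definable subset of $N$ has a generic point, applied to $\Int(D_{s,\bar e})$) enters. Once that is set up, the contradiction is the usual one: the breakpoints of $G(y)\cap l$ are $\dcl_{\CM}(y)$-points, so a single $y$ cannot be compatible with infinitely many distinct breakpoint configurations, forcing each $W_s$ — and hence the set of locally constant partitions of $l$ — to be finite.
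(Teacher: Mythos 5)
Your proposal is correct and takes essentially the same route as the paper's proof: local constancy yields an $n$-dimensional definable set of witnesses $y$, the breakpoints of the partition lie in $\dcl_{\CM}(y,A)$ for $A$ a set defining $l$, and choosing $y$ generic over the breakpoints together with symmetry of o-minimal independence forces the breakpoints into $\acl_{\CM}(A)$, so definability gives finiteness. Your organisation by sign sequences (the sets $W_s\subseteq l^{\,r-1}$) is just a repackaging of the paper's observation that the $\CM$-definable set of endpoints of locally constant partitions is contained in $\acl_{\CM}(A)$, hence finite.
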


\begin{proof}
Let $A$ be any set such that $l$ is definable over $A$. Suppose
that $\cupI$ is a locally constant partition, and
let $a_1,\dots , a_{r}$ be the left endpoints of $I_1, \dots ,I_r$
respectively. Let $U$ be the set of points such that $\cupI$ is a
partition of $G(b)\cap I$ for all $b\in U$. By hypothesis $U$ has
dimension $n$ so there is some $b\in U$ generic over $A\cup \{a_1,
\dots, a_r\}$. Clearly $a_i\in \dcl(b,A)$ and $b\ind_A a_1,\dots,
a_r$ so by symmetry $a_i\in \dcl(A)$. But the set of $x\in l$ such
that $x$ is an endpoint of an interval in a constant partition of
$l$ is $\CM$-definable. Since it is contained in $\acl(A)$
it must be finite.
\end{proof}

It is not hard to verify that Proposition \ref{random crap}
implies the existence of a generic line through $N$ admitting a
non locally constant partition. Formally, (3) of the proposition
implies that we can find $a\in N$ generic and $b\in \partial G(a)$
generic as such, such that $b/\0$ is generic as well. This
implies, in particular, that $\dim \{a'\mid b\in \partial G(a')\}
< n$. Since $a$ was generic, any line $l$ through $b$ will have
non-constant partitions (witnessed by $a$) and, since $b/\0$ is
generic, if $l$ is chosen so that $b$ is generic on $l$ we get
that this line is generic. Observe, moreover, that for such $a$
there is a neighbourhood $U$ of $a$ such that for no $a'\in U$ is
the partition $a'$ induces on $l$ locally constant. So the set of
$a'$ inducing a non-constant partition on $l$ is $n$-dimensional.

Fix such $l_0$ and $b_0$ and let $L$ be a definable family of
lines through $N$ of which $l_0$ is a generic member. Let
$\mathcal B$ be the subset of $N\times L$ of all $(b',l')$ such
that there exists an open set $V$ containing $b'$ where $G(y)\cap
l'$ is a non constant partition of $l'$ of the same type as
$G(b')\cap l'$ for all $y\in V$. Let $l\in L$ be generic and
$(b,l)\in \mathcal B$ be such that the partition of $l$ with
respect to $b$ is maximal among all elements in the set

\[
\{b'\mid (b',l)\in \mathcal B\}.
\]
By definition, for all $b'\in M$ if the size of the partition
$G(b')\cap l$ is greater than the size of the partition of
$G(b)\cap l$ then either $G(b')\cap l$ is a constant partition or
$b'$ is not generic (over $l$). Specifically,

\[
\dim\{b'\mid \text{ $G(b')\cap l$ non constant and }
|s_l(b')|>|s_l(b)|\} < n.\tag{$\clubsuit$}
\]

\bigskip

\noindent \emph{From now on we fix $b,l$ as above.}

\begin{remark}\label{isolated points}
Let $a$ be a generic point of $l$. If $a\in G(b')$ is isolated in
$G(b')\cap l$ then (by symmetry) $\dim(b'/a) < n$.
\end{remark}

\begin{lemma}
Let $\mathbb D$ be the set of all $d\in N$ such that
$|s_l(d)|> |s_l(b)|$. Then there is an $\CN$-definable set
$\mathbb D'$ such that $\dim \mathbb D'\triangle \mathbb D < n$.
In particular, any infinite $\CN$-definable subset of $N\setminus
\mathbb D'$ intersects $N\setminus \mathbb D$ in an infinite set.
\end{lemma}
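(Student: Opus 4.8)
The set $\mathbb D=\{d\in N\mid |s_l(d)|>|s_l(b)|\}$ is $\CM$-definable but not obviously $\CN$-definable, so the goal is to approximate it up to a lower-dimensional symmetric difference by something defined purely in $(N,\po)$. The key observation is that, by $(\clubsuit)$, the ``bad'' part of $\mathbb D$ — the set of $d\in\mathbb D$ for which $G(d)\cap l$ is a \emph{constant} partition — has dimension $<n$, so we may work modulo it and concentrate on those $d$ with $G(d)\cap l$ a non-constant partition. For such $d$ the extra sign changes of $s_l(d)$ compared to $s_l(b)$ must be ``genuine'' in the sense that, after removing finitely many isolated points (whose contribution is again controlled by Remark \ref{isolated points}, which forces $\dim(d/a)<n$ for $a$ a generic point of $l$ lying isolated in $G(d)\cap l$), the partition type of $d$ genuinely dominates that of $b$.

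First I would make precise the quantity $|s_l(d)|$ in $\CN$-definable terms. The number of sign changes of $s_l(d)$ is, up to the accidental isolated-point phenomenon, the number of definably connected components of $G(d)\cap l$ and of $l\setminus G(d)$ that meet $l$ in an open interval; this can be counted by a first-order formula in the $\CM$-language with $l$ as a parameter, but the point is to re-express ``$G(d)\cap l$ has at least $m$ genuine alternations'' using the order $\po$ and counting quantifiers over $N$ only. The mechanism is exactly the one used in Lemma \ref{close section 2} and in the passage to $\po^0$: a threshold on $|s_l(\cdot)|$ translates into a statement about the finiteness or infinitude of certain sets of the form $\{y\in Z\mid y\po d \wedge y\not\po d'\}$ for suitable auxiliary $Z$ and $d'$ ranging over the generic line, since each true alternation of $G(d)\cap l$ forces an interval's worth of elements of $l$ to enter or leave $G(d)$. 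Using $\CN$-minimality (so that every infinite $\CN$-definable subset of $N$ is $n$-dimensional and contains generics) together with o-minimality of $\CM$, finiteness of such a set is equivalent to its being zero-dimensional, and these conditions are $\CN$-definable by Theorem \ref{shelah}-style manipulations and the elimination of $\exists^\infty$ from Remark \ref{exchange}. Let $\mathbb D'$ be the $\CN$-definable set so obtained.

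Next I would verify $\dim(\mathbb D'\triangle\mathbb D)<n$. On the one hand, any $d$ with $G(d)\cap l$ non-constant and $|s_l(d)|>|s_l(b)|$ lies in $\mathbb D'$ by construction, and the $d\in\mathbb D$ with $G(d)\cap l$ constant form a set of dimension $<n$ by $(\clubsuit)$; so $\mathbb D\setminus\mathbb D'$ is contained in a lower-dimensional set. On the other hand, if $d\in\mathbb D'\setminus\mathbb D$ then the $\CN$-definable conditions defining $\mathbb D'$ hold while $|s_l(d)|\le|s_l(b)|$; this can only happen because the ``genuine alternations'' we counted differ from the actual sign changes of $s_l(d)$, i.e. because of isolated points of $G(d)\cap l$ at generic points of $l$, and by Remark \ref{isolated points} each such $d$ satisfies $\dim(d/a)<n$ for a fixed generic $a\in l$; integrating over the (at most $(n-1)$-dimensional, actually finite-by-o-minimal-dimensional) fibre structure shows $\dim(\mathbb D'\setminus\mathbb D)<n$ as well. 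Combining the two bounds gives $\dim(\mathbb D'\triangle\mathbb D)<n$. The final sentence of the statement then follows formally: if $S\subseteq N\setminus\mathbb D'$ is $\CN$-definable and infinite, then $\dim S=n$ by $\CN$-minimality, while $\mathbb D\setminus\mathbb D'\subseteq\mathbb D'\triangle\mathbb D$ has dimension $<n$, so $S\cap(\mathbb D\setminus\mathbb D')$ cannot be all of $S\cap\mathbb D$; hence $S\setminus\mathbb D=S\cap(N\setminus\mathbb D)$ differs from $S$ by a set of dimension $<n$ and is in particular infinite.

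\textbf{Main obstacle.} The delicate point is the translation in the second paragraph: showing that the purely combinatorial quantity $|s_l(d)|$ (number of sign changes, an $\CM$-notion attached to the external line $l$) agrees, outside a lower-dimensional set, with something expressible by $\po$-formulas and counting quantifiers over $N$. One must be careful that the auxiliary sets $Z$ witnessing alternations can themselves be taken $\CN$-definable and that the finiteness conditions used are genuinely first-order in $\CN$ (this is where $\exists^\infty$-elimination from Remark \ref{exchange} and Shelah's Theorem \ref{shelah} are doing real work), and one must control the discrepancy caused by isolated points uniformly in $d$ — which is exactly what Remark \ref{isolated points} and $(\clubsuit)$ are set up to provide. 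Everything else is routine dimension bookkeeping.
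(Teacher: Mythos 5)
There is a genuine gap, and it starts with a misreading of $(\clubsuit)$. That statement bounds the dimension of the set of $d$ inducing a \emph{non-constant} partition on $l$ with $|s_l(d)|>|s_l(b)|$; it says nothing about the constant-partition part of $\mathbb D$, and indeed that part cannot in general have dimension $<n$: by the very definition of a locally constant partition, the set of $d$ inducing a fixed such partition has nonempty interior, hence dimension $n$. So your plan inverts the picture — you discard the constant-partition elements of $\mathbb D$ as ``lower-dimensional by $(\clubsuit)$'' and try to capture the non-constant ones $\CN$-definably, whereas the non-constant ones are exactly the negligible set and the constant ones are the bulk of $\mathbb D$ that $\mathbb D'$ must actually capture. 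With this inversion your bound on $\dim(\mathbb D\setminus\mathbb D')$ collapses, since the $n$-dimensional constant-partition part of $\mathbb D$ is never shown to lie in $\mathbb D'$.

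The second problem is that your construction of $\mathbb D'$ is never actually carried out: expressing ``$G(d)\cap l$ has at least $m$ genuine alternations'' via finiteness of sets $\{y\in Z\mid y\po d\wedge y\not\po d'\}$ with $d'$ ``ranging over the generic line'' requires quantifying over the $\CM$-definable line $l$ and its induced order, which are not available in $\CN$ — this is precisely the difficulty the lemma is meant to bypass, and you flag it yourself as the main obstacle without resolving it. The paper's route avoids it entirely: by $(\clubsuit)$ one may ignore the non-constant high-sign-change elements; by Claim \ref{finite} the locally constant partitions of $l$ with more sign changes than $s_l(b)$ fall into finitely many patterns $G(d_1)\cap l,\dots,G(d_k)\cap l$; choosing one generic point $c_{ij}\in l\subseteq N$ inside each interval of each pattern, one sets $D_j:=\{x\mid c_{ij}\in G(x)$ exactly when $f_l^{d_j}(i)=1\}$, a quantifier-free $\po$-formula with parameters from $N$, and $\mathbb D':=\bigcup_j D_j$. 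The two error sets are then controlled exactly by the tools you mention but in the opposite roles: $\mathbb D\setminus\mathbb D'$ lies in the $(\clubsuit)$-set, and $\mathbb D'\setminus\mathbb D$ consists of $x$ for which some $c_{ij}$ is an isolated point of $G(x)\cap l$, which by Remark \ref{isolated points} forces $\dim(x/l)<n$. Your final deduction of the ``in particular'' clause from $\dim(\mathbb D'\triangle\mathbb D)<n$ is fine, but the heart of the lemma — producing the $\CN$-definable $\mathbb D'$ and bounding both halves of the symmetric difference — is missing as proposed.
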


\begin{proof}
By Claim \ref{finite} we can find $d_1,\dots, d_k$,
representatives of the locally constant partitions of $l$, such that for
any $y$ if the partition of $G(y)\cap l$ is locally constant and
$|s_l(y)|> |s_l(b)|$ then $G(y)\cap l=G(d_i)\cap l$ for some $i$.

Let $J_0$ be the set of $(i,j)$ such that $f_l^{d_j}(i)=0$ and let
$J_1$ be the set of $(i,j)$ such that $f_l^{d_j}(i)=1$. Let
$c_{ij}\in I_i(d_j)$ be generic points in $l$ for all $(i,j)$.

Let
\[
D_j:=\left\{x \left|\right. \bigwedge_{(i,j)\in J_1} c_{ij}\in
G(x) \wedge \bigwedge_{(i,j)\in J_0} c_{ij}\not\in G(x)\right\}.
\]

\medskip

\noindent If $x\in D_j$ then either $|s_l(b)|<|s_l(x)|$, implying that
$x\in \mathbb D$, or $c_{ij}$ is an isolated point in $G(x)\cap l$
for some $i\in J_0$. By remark \ref{isolated points} this implies
that the latter case can only occur if $\dim x/l < n$.
Conversely, if $|s_l(x)|> |s_l(b)|$ then either $x$ induces on $l$ a
constant partition, in which case $G(x)\cap l=G(d_j)\cap l$
for some $j$ and $x\in D_j$; or $x$ is not generic in $\{x\mid
(x,l)\in \mathcal B\}$ by ($\clubsuit$) above. Setting

\[\mathbb D':=\bigcup_j D_j\] the conclusion of the lemma follows.
\end{proof}

Recall that we fixed some generic $b$ inducing a partition of $l$
maximal among all generic $b'$ such that $(b',l)\in \mathcal B$.
Let $J_1:=\{j\mid f^b_l(j)=1\}$ and $J_0:=\{j\mid f^b_l(j)=0\}$.
Let $a_j$ be generic points in the interior of $I_j(b)$. Define:
\[
 Z_0:= \left\{y\mid \bigwedge_{j\in J_1} a_j\in G(y)\wedge
\bigwedge_{j\in J_0} a_j\not\in G(y)\right\}
\]
and
\[
Z:=\{y\in Z_0\mid y\notin \mathbb D'\}.
\]
Finally, we can define our partial order $\pot$:
\begin{equation*}
x_1 \pot x_2\iff |\{  y\in Z\mid y\po x_1 \wedge y\not\po x_2\}| <
\infty.\tag{$\dagger$}
\end{equation*}
By Lemma \ref{close section 2} $\pot$ is indeed a partial order,
and $\{y\mid y \pot a\}$ is generically closed for all generic
$a\in N$. We now show that $\pot$ has the property we were looking
for, namely that restricted to some line (not surprisingly, $l$)
through $N$ it is linear.

To see this note that, as $G(b)$ does not induce a constant
partition on $l$, there is some endpoint $d$ of one of the
intervals in $G(b)\cap l$ witnessing it. Because $d$ is generic in
$l$ we can find a neighbourhood $V\cap l$ of $d$ such that (without
loss of generality) for all $d_1 < d_2$ ($d_1,d_2\in V\cap l$) if
for some $y$ we have
\[
\bigwedge_{j\in J_1} a_j\in G(y)\wedge \bigwedge_{j\in J_0}
a_j\not\in G(y) \wedge d_1\in G(y) \wedge d_2\not\in G(y)\tag{$*$}
\]
then either $|s_l(y)| > |s_l(b)|$ and $y\in \mathbb{D}$ or $a_j$
is an isolated point in $G(y)$ for some $j\in J_1$. Since
$\dim(\mathbb D \triangle \mathbb D')<n$ by Remark \ref{isolated
points} there can be only finitely many $y$ outside $\mathbb D'$
satisfying ($*$). So restricted to $V\cap l$, $\pot$
is a quasi order agreeing with the natural order on $l$.

It remains to verify that $\pot$ is an order (on $V\cap L$). But
since $b$ is generic over all the data there are neighbourhoods $U$
of $b$ and $W\subseteq l$ of $d$ such that for all $d'\in W$ there
is some $b'\in U$ with $d'$ an endpoint of an interval in the
partition of $l$ induced by $b'$ and $a_i\in G(b')$ if and only if
$a_i\in G(b)$ (see Figure 1 below). So for every $x_1\neq x_2 \in
W$ there are infinitely many $b'$ in $U$ separating them and $x_1
\not\equiv_t x_2$.

\begin{center}
\includegraphics[height=1.7in,width=4in]
{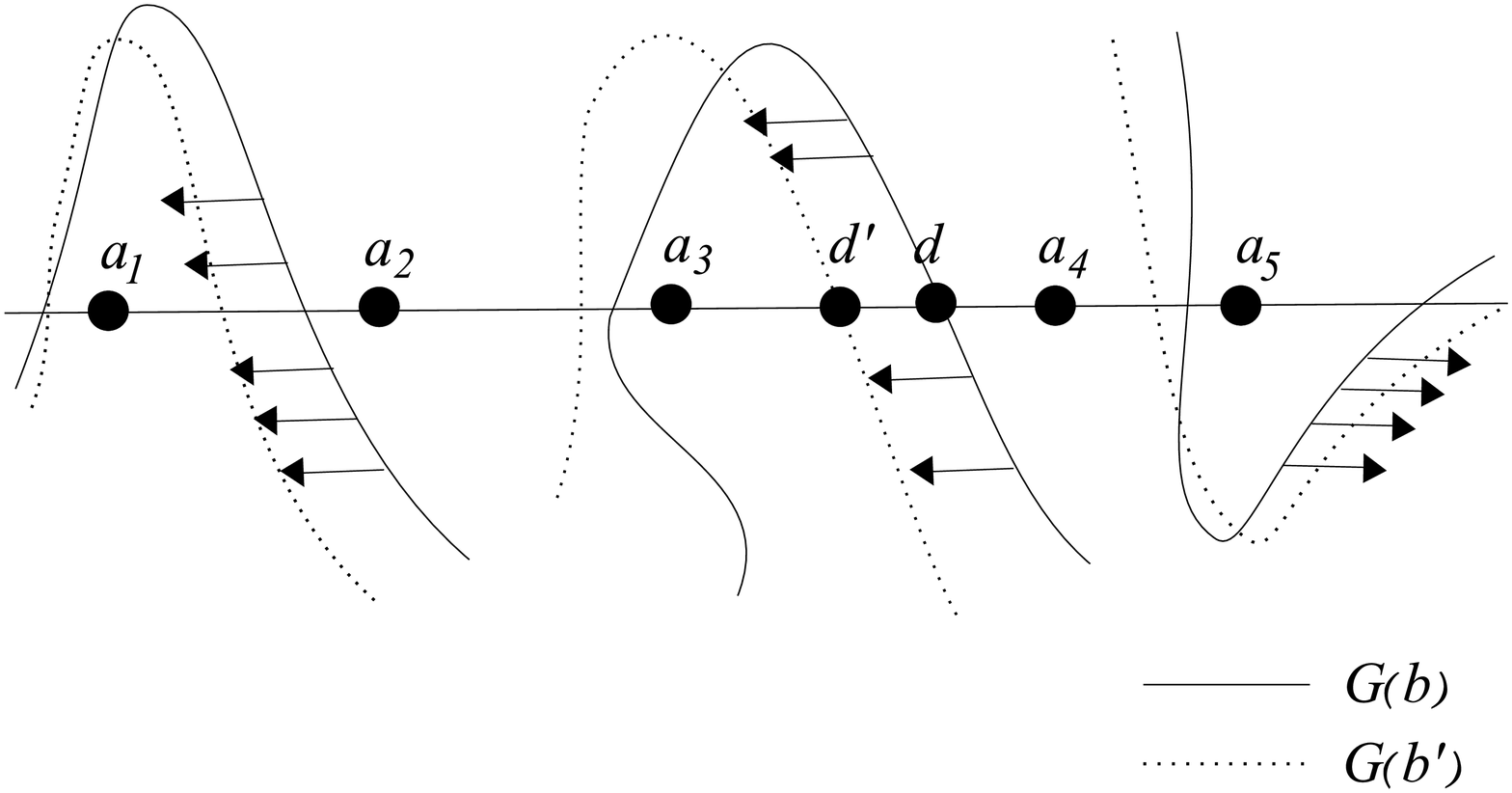}\\
Figure 1: $d'\pot d$
\end{center}

\subsection{Going down}\label{gdown}
We can now prove the main result of
this section:

\begin{theorem}\label{dimension down}
Let $\CN$ be definable (over $\0$) in an o-minimal structure
$\CM$. Assume that $\dim_{\CM} N = n >1$. Then there exists an
infinite $\CN$-definable (over $B$) set $X\subseteq N$ such that
$\dim(X) < n$ and $\ur(\tp(g/\0)) = \infty$ for some $g\in
(X\setminus \acl_{\CM}(B))$
\end{theorem}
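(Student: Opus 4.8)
The goal is to produce an $\CN$-definable set of dimension strictly less than $n$ which still carries an unstable (indeed $\ur$-rank-$\infty$) type. The setup preceding the statement has already done the hard geometric work: we have an $\CN$-definable order $\pot$ (defined by $(\dagger)$) whose cones $\{y\mid y\pot a\}$ are generically closed (Lemma \ref{close section 2}), and we have a generic line $l$ through $N$ on which $\pot$ restricts to a genuine linear order agreeing with the order $l$ inherits from $\CM$, at least on a neighbourhood $W\subseteq l$ of the generic endpoint $d$. The plan is to exploit this linearity: since $\pot$ is linear on the one-dimensional set $W$, the ``boundary'' data of the $\pot$-cones relative to $N$ cannot be full-dimensional everywhere, and a suitable $\pot$-cone (or Boolean combination of cones) intersected with the already-constructed set $Z$ will have dimension $<n$ while a generic $\po$-chain still passes through it.

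\medskip

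\noindent\textbf{Key steps.} First, I would fix a generic $g\in l$ lying in $W$ (so $g$ is generic in $N$ as well, $l$ being a generic line) and consider the $\pot$-cone $G_t(g):=\{y\mid g\pot y\}$ or its complement. Using that $\pot$ is linear on $W$ and that the cones are generically closed, I would argue that for generic $a\in W$ the set $\partial_N G_t(a)$ — the $\CM$-topological boundary of the $\pot$-cone inside $N$ — has dimension $n-1$: if it had dimension $n$ generically, the local structure of $\pot$ on $N$ near $g$ would be ``thick'' in a way incompatible with $\pot$ being a dense linear order on the one-dimensional neighbourhood $W$ through $g$; here one invokes \cite{johns} exactly as in part (3) of Proposition \ref{random crap} to pin the boundary dimension at $n-1$. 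Second, having such a generic $g$ with $\dim\partial_N G_t(g)=n-1$, I set $X:=\partial_N G_t(g)\cap Z$ (or a finite Boolean combination of such boundaries to kill accidental lower-dimensional debris), so $\dim X\le n-1<n$. Third — and this is the crucial point — I must show $X$ still meets an infinite $\po$-chain of elements of $\ur$-rank $\infty$. For this I would take the infinite $\po$-chain $\langle a_i\rangle$ of $\CM$-generic realizations of a single generic type $p$ from Proposition \ref{random crap}(2), and observe that generically along such a chain the elements sit on the $\pot$-boundary of suitable cones (this is the content of \ref{random crap}(3): a generic realization of $p$ lies on $\partial G(a)$ for another realization $a$, and the same phenomenon transfers to $\pot$ because $\pot$ refines the $\CN$-definable closure data). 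One then checks that $\ur$-rank $\infty$ is inherited: any $g\in X$ realizing the appropriate type still has the property that every non-algebraic $\CN$-forking extension of $\tp(g/\0)$ is unstable (indeed has infinite $\pot$- or $\po$-chains), which is precisely $\ur(\tp(g/\0))=\infty$ by Definition \ref{thorking} — or more simply, $X$ being $\CN$-definable and unstable (it carries a linear order via $\pot$ restricted appropriately, or at worst a partial order with infinite chains), Theorem \ref{shelah} applies again.

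\medskip

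\noindent\textbf{Main obstacle.} The delicate step is the second one: showing that the $\pot$-boundary relative to $N$ drops dimension generically. The order $\pot$ is defined by a counting condition ($(\dagger)$: finitely many $y\in Z$ with $y\po x_1,\ y\not\po x_2$), so its cones are not a priori topologically tame in the ambient $\CM$ the way $\po$-cones are — one has to use the generic closedness from Lemma \ref{close section 2} together with the fact, established just before this subsection, that $\pot$ is linear on $W$, to transfer tameness. Concretely, I expect to argue: if $\dim\partial_N G_t(g)=n$ for generic $g$, then there is an open $U\subseteq N$ in which both $G_t(g)$ and its complement are dense; restricting to the line $l$ through a generic point of $U$ would then contradict the density-linear-order behaviour of $\pot$ on $W$, since a dense linear order's up-set on an interval has one-dimensional (indeed empty-interior-free but not co-dense) boundary. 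Making this contradiction precise — matching the $n$-dimensional ambient picture to the $1$-dimensional behaviour on lines — is where the real care is needed, and it parallels the line-counting arguments ($\clubsuit$) and Remark \ref{isolated points} already in play, so the machinery is at hand.
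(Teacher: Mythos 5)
There is a genuine gap, and it sits exactly where the theorem's difficulty lies: $\CN$-definability of the small-dimensional set. Your candidate $X:=\partial_N G_t(g)\cap Z$ is built from the $\CM$-topological boundary of a $\pot$-cone. That boundary is $\CM$-definable, but there is no reason it should be definable in $\CN$ (closure and boundary are not operations available to the reduct), so the set you produce does not satisfy the conclusion of the theorem. Incidentally, the step you flag as delicate --- that the relative boundary has dimension at most $n-1$ --- is automatic from o-minimality for any $\CM$-definable subset of $N$, so the effort is misplaced: the real problem is not pinning the boundary's dimension but finding something $\CN$-definable inside a set of dimension $<n$. The paper's proof handles this by projecting onto the line: setting $m(y):=\max\{x\in (a,b)^l\mid x\pot y\}$, arguing that on a subinterval all fibers $m^{-1}(c)$ have dimension $n-1$ (or else $(a,b)_{\pot}$ itself already has dimension $<n$), and then exhibiting genuinely $\pot$-definable sets trapped in such a fiber: either $X_k(c):=\{y\mid (c,y)_{\pot}$ has no $\pot$-chains of length $>k\}$ contains the generic $g$, or the $\pot$-interval $(c,g)_{\pot}\subseteq m^{-1}(c)$ is infinite; both are $\CN$-definable and of dimension $<n$.

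The second gap is the rank statement. You assert that every non-algebraic forking extension of $\tp(g/\emptyset)$ is unstable and that this ``is precisely'' $\ur(\tp(g/\emptyset))=\infty$ by Definition \ref{thorking}; but that definition concerns \th-dividing, the asserted property of forking extensions is never established, and the implication is not a definition in any case. The paper proves the rank claim concretely: for $g$ generic in $(a',b')_{\pot}$ it takes an $\CM$-independent $\CM$-indiscernible sequence $\langle g_i\rangle_{i\in\mathbb{Q}}$ in $\tp_{\CM}(g/A)$, sets $c_i:=m(g_i)$, and uses the formulas $(c_i\lneq_t x)\wedge\neg(c_j\lneq_t x)$, realized by $g_k$ for $i<k<j$ and dividing over $A$, to produce infinite forking chains (the $\mathbb{Q}$-indexing is what makes them infinite), whence $\ur(\tp(g/A))=\infty$ for every $A$. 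Your proposal contains no substitute for this construction, and your appeal to Proposition \ref{random crap}(3) and Theorem \ref{shelah} only yields instability of some set, not the infinite $\ur$-rank of a specific element of $X$ outside $\acl_{\CM}(B)$, which is what the application in Claim \ref{ominimal} actually needs.
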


\begin{proof}
Assume not. So we can construct an $\CN$-definable (quasi) order $\pot$ linear on some generic line $l$ through $\CN$. By Lemma \ref{close section 2} the lower cones of $\pot$ are generically closed. We fix $\pot$, $l$ and $a,b\in l$ such that $\pot$ is linear on $[a,b]_{\pot}\cap l$. 

We need some notation and easy observations. First, by the genericity of $l$,
any ($\CM$)-generic $c\in l$ is also $\CM$-generic over $\0$. Denote $[a,b]^l:= [a,b]_{\pot} \cap l$ and let

\[
 m(y):=\max_{[a,b]^l} \left\{x\in (a,b)^l \mid x\pot y\right\}
\]
for any $y\in N$ ($m$ need not be globally defined).
With this notation, if $y\in (a,b)_{\pot}$ then $m(y)\in
[a,b]^l$ and since $m(y)\in \cl\{x\in
(a,b)^l \mid x\pot y\}$ Lemma
\ref{close section 2} assures that $m(y)\pot y$ whenever $m(y)$ is generic in
$l$.

\begin{claim}\label{not dimension n}
We may assume that $\dim(m^{-1}(c))<n$ and that $c\pot m^{-1}(c)$
for all $c\in (a,b)^l$.
\end{claim}

\begin{proof}
Since $\dim(N)=n$, necessarily, $\dim(m^{-1}(c))<n$ for all but
finitely many $c$. On the other hand the formula $\exists y
(m(y)=c\wedge c\not\pot y)$ implies that $c$ is not generic in
$l$ so there are only finitely many such.

Thus, by shrinking our interval if necessary, we can find some
$[a',b']^l$ subinterval of $[a,b]^l$ such that $\dim(m^{-1}(c))<n$
for all $c\in [a',b']^l$. Replacing $[a,b]^l$ with $[a',b']^l$ it is easy to verify that  the claim follows.
\end{proof}

\begin{claim}\label{finite fibers}
If $\dim(m^{-1}(x)) < n-1$ for all $x\in
(a,b)^l$ then $\dim( (a,b)_{\pot})<n$.
\end{claim}

\begin{proof}
If $a <_t x <_t b$ then $a\le m(x) \le b$ so

\[(a,b)_{\pot}\subseteq \bigcup_{x\in [a,b]^l} m^{-1}(x).\] 
Therefore $\dim(m^{-1}(x))<n-1$ for all such $x$ implies, by the additivity of o-minimal dimension, that $\dim ((a,b)_{\pot})<n$.
\end{proof}

Because $(a,b)_{\pot}$ has infinite
$\pot$-chains (e.g. the interval in $(a,b)^l$) it must be
unstable; so if $\dim(a,b)_{\pot}<n$ the theorem follows. Hence we may assume that $\dim ((a',b')_{\pot})=n$ for all
$a < a'<b'< b$ in $l$ and that, by Claim \ref{finite fibers},
$\dim(m^{-1}(x))=n-1$ for infinitely many $x\in (a,b)^l$. By
o-minimality of $l$ (and definability of the o-minimal dimension)
there is a subinterval $(a',b')^l\subseteq (a,b)^l$ such that
$\dim(m^{-1}(x))=n-1$ for all $x\in (a',b')^{l}$.

\begin{claim}
Let $a',b'\in l$ be such that $\dim(m^{-1}(x))=n-1$ for all $x\in
(a',b')^l$. Then $\ur(\tp(g/A))=\infty$ for any parameter set $A$ and $g\in (a',b')_{\pot}$ generic over $A$. 
\end{claim}

\begin{proof}
Fix $A$ and $g\in (a',b')_{\pot}$ generic over $A$. Let $\langle g_i\rangle_{i\in \mathbb{Q}}$ be an $\CM$-independent
$\CM$-indiscernible sequence in $\tp_{\CM}(g/A)$. Since $g$ is generic over $A$, in particular, $m(g)$ is generic in $l$, and therefore $m(g_i)\neq m(g_j)$ for all $i\neq j$. Let
$c_i:=m(g_i)$ so the sequence $\langle c_i\rangle_{i\in
\mathbb{Q}}$ is a $\pot$-linearly-ordered indiscernible sequence, so without loss of generality it is $\pot$-increasing. 

By hypothesis $c_i\lneq_t g_j$ if and only if $i<j$. As in the case of dense
linear orders, the formula $(c_i\lneq_t x) \wedge \neg(c_j\lneq_t
x)$ divides over $A$, and is realized by $g_k$
for all $i<k<j$. Since our set is indexed by $\mathbb{Q}$ we get
an infinite dividing sequence which witnesses that
$\ur(\tp(g/A))=\infty$.
\end{proof}

From now on, we will assume $a',b'\in l$ are such that
$\dim(m^{-1}(x))=n-1$ and $m(x)\pot x$ for all $x\in
(a',b')_{\pot}$. Let $g\in (a',b')_{\pot}$ be generic over all the
parameters defining $l$, so by the last claim $\ur(\tp(g/l))=\infty$. Denoting 
$c=m(g)$ we know that $c$ is generic in $(a',b')^l$. The
following lemma will complete the proof of Theorem \ref{dimension
down}.

\begin{lemma}
Let $g,c$ be as above then there exists a set $X$ with $\dim X <
n$ such that either $X$ is  $\CN$-definable over $c$ and $g\in X$
or $X$ contains an infinite $\pot$-chain.
\end{lemma}

\begin{proof}

Since $g$ is generic over $l$ and $\dim(c/l)=1$ the
additivity of o-minimal dimension gives $\dim(\tp(g/c))=n-1$. So
any $c$-definable set containing $g$ is non algebraic.

Notice that by density of $\pot$ on $(a',b')^l$ if $y\in (a',b')^l$ is such
that $c\neq m(y)$ then either $(c,y)_{\pot}$ is empty or it
contains an infinite $\pot$-chain (and infinitely many points) in
$(a,b)^l$. Writing 
\[
X_n(c):= \left\{y\left|\right.
(c,y)_{\pot} \text{ does not contain $\pot$-chains of size larger
than $n$}  \right\}
\]
this implies that either $g\in X_n(c)$ for some $n\in \Nn$, and $X_n(c)$ being $\pot$-definable the lemma
follows, or $g\notin X_n(c)$ for all $n$. 

In the latter case, by saturation of $\CM$, the interval $(c,g)_{\pot}\subseteq m^{-1}(c)$ contains infinite
$\pot$-chains. By assumption, this implies that $\dim (c,g)_{\pot} < n$ and being unstable it satisfies the conclusion of the lemma.
\end{proof}

\noindent This finishes the proof of the theorem.
\end{proof}

\subsection{An alternative proof}
In this subsection we propose a different approach to the proof of
Theorem \ref{dimension down}. We find the proof instructive in the way it
allows us to control local phenomena in reducts of o-minimal
theories. However, being technically more involved, we do not give
all the details. For simplicity and concreteness we discuss the
case $N\subseteq M^2$ and $\dim N = 2$. The interested reader should not find it hard to convince himself (or herself, or others) that the proof extends to the general case, but even the uninterested reader would probably not need much convincing in agreeing that the proof is unpleasant enough as it is, even without the additional technicalities such a generalisation would require. 

To make life a little easier we will prove a slightly weaker version of the theorem. We prove: 

\begin{theorem}
 Let $\CN$ be definable in an o-minimal structure $\CM$. If $\CN$ is unstable and $\dim N =n >1$ then there exists an infinite  $\CN$-definable $X\subseteq N$ such that $\dim X < n$. 
\end{theorem}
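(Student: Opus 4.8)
The plan is to re-run the argument of the previous subsection's proof of Theorem \ref{dimension down}, but in the special case $N\subseteq M^2$, $\dim N=2$, replacing the abstract dimension-counting with an explicit geometric analysis of how the cones $G(a)$ meet a generic line. As in \S\ref{dimdown}, by Shelah's theorem (Theorem \ref{shelah}) we may assume $\CN=(N,\po)$ for an $\CM$-definable partial quasi order $\po$ with infinite chains, and (passing to a definable subset and quotienting by a finite equivalence relation, as in Fact \ref{assuming closed G}) that $\po$ is an order. We may also assume $N$ is $\CN$-minimal: otherwise there is already an infinite $\CN$-definable $X\subseteq N$ with $\dim X<2$ and we are done. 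So for the rest of the argument every infinite $\CN$-definable subset of $N$ is $2$-dimensional and hence contains an $\CM$-generic point, and $\acl_\CN$ has exchange by Remark \ref{exchange}.

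First I would fix, using Proposition \ref{random crap}(3) and the discussion following Claim \ref{finite}, a generic line $l$ through $N$ and a generic $a\in N$ whose cone $G(a)$ induces a non-locally-constant partition $\bigcup_i I_i$ of $l$; among all generic $(b,l)$ with $(b,l)\in\mathcal B$ I choose $b$ with a maximal number of sign changes, so that ($\clubsuit$) holds. Then, picking generic points $a_j$ in the interiors of the intervals $I_j(b)$, I form $Z_0$, $Z$ and the order $\pot$ exactly as at the end of \S\ref{dimdown}: by Lemma \ref{close section 2} $\pot$ is a partial order with generically closed lower cones, and by the argument given there (the figure-1 argument) $\pot$ restricts to a genuine linear order on some subinterval $[a',b']^l$ of $l$. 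At this point the setup is identical to the situation at the start of \S\ref{gdown}, with $m(y):=\max_{[a',b']^l}\{x\mid x\pot y\}$ and $m(y)\pot y$ for generic $y$.

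Next I would run the case analysis of the proof of Theorem \ref{dimension down}. Shrinking $[a',b']^l$ I arrange $\dim m^{-1}(c)<2$ and $c\pot m^{-1}(c)$ for all $c$ in the interval (Claim \ref{not dimension n}). If $\dim m^{-1}(x)<1=n-1$ for all $x$ in a subinterval then by additivity of o-minimal dimension (Claim \ref{finite fibers}) $\dim(a',b')_{\pot}<2$, and $(a',b')_{\pot}$ is unstable since it contains the infinite $\pot$-chain lying in $l$, so $X:=(a',b')_\pot$ works. Otherwise $\dim m^{-1}(x)=1$ on a subinterval; fixing a generic $g\in(a',b')_\pot$ with $c:=m(g)$, the fibre-analysis lemma at the end of \S\ref{gdown} produces, from the density of $\pot$ on $[a',b']^l$, either a $\pot$-definable $X_n(c)\ni g$ of dimension $<2$ (because $\dim(\tp(g/c))=n-1=1<2$) or an infinite $\pot$-chain inside $(c,g)_\pot\subseteq m^{-1}(c)$, which then has dimension $<2$ and is unstable. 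In all cases we obtain an infinite $\CN$-definable $X\subseteq N$ with $\dim X<2$, as required; the general case $\dim N=n$ is identical with $n-1$ in place of $1$.

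The main obstacle — and the reason this is billed as an "alternative" proof worth only a sketch — is establishing the linearity of $\pot$ on a segment of $l$ by hand: one must genuinely control, for $d_1<d_2$ near a generic endpoint $d$ of an interval of $G(b)\cap l$, which parameters $y$ can have $a_j\in G(y)$ for $j\in J_1$, $a_j\notin G(y)$ for $j\in J_0$, $d_1\in G(y)$ and $d_2\notin G(y)$, and to see that all but finitely many such $y$ lie in the exceptional set $\mathbb D'$ — this is the $(*)$-argument together with Remark \ref{isolated points} and ($\clubsuit$), and it is exactly the step whose details we suppress. Everything downstream of that — the dimension count on the fibres of $m$, and the trichotomy "small fibres $\Rightarrow$ small $(a',b')_\pot$ / medium fibres $\Rightarrow$ either $g$ lands in a low-dimensional $\pot$-definable set or $(c,g)_\pot$ contains an infinite chain" — is routine once Lemma \ref{close section 2} and additivity of o-minimal dimension are in hand.
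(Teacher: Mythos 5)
Your argument is correct, but it is not the paper's proof of this statement: you essentially re-run (indeed, could simply cite) the proof of Theorem \ref{dimension down} from \S\ref{gdown}, of which this theorem is an immediate weakening, whereas the subsection containing this statement is explicitly offering an \emph{alternative} route. The paper keeps only the shared setup (the generic line $l$, the order $\pot$ of $(\dagger)$ linear on a segment of $l$, Lemma \ref{close section 2}) and then argues topologically: Claim \ref{finite succesors} shows, under $\CN$-minimality, that for $\CM$-generic $a$ and generic $c\in\partial L_t(a)$ the $\pot$-interval between $c$ and $a$ is finite, so no generic $a$ can be non-isolated in $L_t(a)$ (non-isolation would make $(b,a)_{\pot}$ infinite for $b\in L_t(a)$ generic over $a$); the bulk of the work then goes into \emph{producing} such a non-isolated generic point by passing to a second parallel line $l'$, restricting the set $Z$ in the definition of $\pot$ to parameters ``good'' for both lines, and iterating this restriction ($Z'$, $Z''$), the iteration terminating by o-minimality. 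Your route buys brevity: once Theorem \ref{dimension down} is on record, the weaker statement follows at once, with all the heavy lifting (linearity of $\pot$ on $l$, Claims \ref{not dimension n} and \ref{finite fibers}, the fibre lemma) already done; the paper's route buys independence from the $m$-fibre analysis and the $\ur$-rank bookkeeping, giving a purely local, topological argument, which is the authors' stated reason for including it. One slip in your write-up: in the sub-case $g\in X_n(c)$ you justify $\dim X_n(c)<2$ ``because $\dim(\tp(g/c))=n-1=1<2$''; that inference goes the wrong way --- it only shows $X_n(c)$ is infinite. The low dimension comes from the density observation in the fibre lemma: any $y$ above $c$ with $m(y)\neq c$ has an infinite chain (a segment of $l$) inside $(c,y)_{\pot}$, so the relevant part of $X_n(c)$ is trapped in $m^{-1}(c)$, which has dimension $n-1$. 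Since you defer to that lemma anyway, this is a misattributed reason rather than a gap, but as stated the parenthetical is not a proof.
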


The proof starts at the same point where Section \ref{gdown} does,
and we keep the assumptions and notation accumulated up to
that point. In particular, we have a fixed generic line $l$
through $N$ and the $\CN$-definable order $\pot$ obtained above.
Recall that the restriction of $\pot$ to $l$ coincides with the
natural, o-minimal, order on $l$ (induced from some definable
homeomorphism of $l$ with an interval in $M$). Because of the
assumption that $N$ is of full dimension (in $M^2$) we may choose
the line $l$ parallel to one of the axis. In the general case
($N\subseteq M^k$ and $\dim N = n$) we choose an $n$-dimensional
cell $N'\subseteq N$ which is naturally definably homeomorphic to
an open box $B\subseteq M^n$; in that case we choose $l'$ through
$B$ parallel to one of the axis and set $l$ to be the image of
$l'\cap B$ under the inverse homeomorphism.

The assumption that $\pot$ agrees with the order on $M$ on some
line $l$ through $N$ implies that there is an infinite set of
$x\in N$ such that $x$ is not an isolated point in $\cl(L_t(x))$.
The first part of the proof consists in showing that if for some $a$, generic over all the data (including the
parameters required to define $\pot$), $a$ is not isolated in
$L_t(a)$ then the desired result follows. We will then show how
to change $\pot$ to obtain such an $\CM$-generic $a$.

\medskip

For the first part we need the following easy claim:
\begin{claim}\label{finite succesors}
Let $a\in N$ be $\CM$-generic and $c\in \partial L_t(a)$ be
generic as such. If, in addition, $c/\0$ is generic then $\{z\mid c\po z
\po a\}$ is finite and $\{b\in L_t(a)\mid |(b,a)_{\pot}|<\infty\}$
is $n$-dimensional.
\end{claim}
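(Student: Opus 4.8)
\textbf{Proof proposal for Claim \ref{finite succesors}.}

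The plan is to extract both conclusions from genericity-and-dimension bookkeeping, using the results already at hand: Remark \ref{isolated points}, Remark \ref{exchange} ($\CN$-minimality gives exchange for $\acl_{\CM}$ and elimination of $\exists^\infty$), the additivity of o-minimal dimension, and the fact (established just before Section \ref{gdown}) that the lower $\pot$-cones are generically closed. First I would fix the data $A$ over which $\pot$, $l$, and all auxiliary parameters are defined, so that ``generic'' means $\CM$-generic over $A$. Take $a$ generic, $c\in \partial L_t(a)$ generic as such with $c/\0$ also generic. Since $c\in\partial L_t(a)$ and $c$ is generic of full dimension over $\0$, the set $\{a'\mid c\in\partial L_t(a')\}$ has dimension $<n$ (this is the symmetry observation underlying Remark \ref{isolated points}, applied to $\pot$ in place of $\po$). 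Now suppose toward a contradiction that the intermediate set $S:=\{z\mid c\po z\po a\}$ — here it should read $\pot$, i.e. $S=\{z\mid c\pot z\pot a\}$ — is infinite; being $\CN$-definable over $\{a,c\}$ it then has dimension $n$ by $\CN$-minimality, so it contains a point generic over $\{a,c\}$, hence generic over $A$.

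The key step is to derive a contradiction from the existence of such an intermediate generic $z$. Because $\pot$ is a partial order, $c\pot z\pot a$ forces $L_t(c)\subseteq L_t(z)\subseteq L_t(a)$; I would argue that $z$ generic with $c$ on the boundary of $L_t(a)$ ``from the same side'' as it is on the boundary of $L_t(z)$ is too constrained. Concretely: $c\in\partial L_t(a)$ means arbitrarily close to $c$ there are points outside $L_t(a)$, hence outside $L_t(z)$; combined with $c\pot z$ (so $c\in L_t(z)$) and generic closedness of $L_t(z)$, $c$ cannot be generic and lie on $\partial L_t(z)$ unless $z$ itself lies in the small set $\{z'\mid c\in\partial L_t(z')\}$ of dimension $<n$ — contradicting genericity of $z$. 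Making this precise is the main obstacle: one must check that the containment $L_t(z)\subseteq L_t(a)$ together with $c$ being in the boundary of the larger set but in the (generically closed) smaller set genuinely forces $c\in\partial L_t(z)$, using that a generic point of $L_t(z)$ in its closure already belongs to $L_t(z)$, so $c\notin L_t(z)$ would follow unless $c$ is non-generic or $z$ is in the exceptional set. This pins down finiteness of $S$.

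For the second assertion, consider $T:=\{b\in L_t(a)\mid |(b,a)_{\pot}|<\infty\}$. Since $\pot$ restricted to $l$ is the o-minimal order and $l$ is generic through $N$, for $a$ generic the cone $L_t(a)$ meets $l$ in an infinite initial segment, and by the density of $\pot$ on the relevant subinterval of $l$ together with the genericity of $a$, ``most'' $b\in L_t(a)$ have $(b,a)_{\pot}$ infinite; but the boundary behaviour we just analysed shows that the points $b$ lying on (or just below) $\partial L_t(a)$ near the generic boundary point $c$ do have $(b,a)_{\pot}$ finite — these are exactly the $b$ for which the argument of the first part applies. I would then observe that the set of such $c\in\partial L_t(a)$ generic with $c/\0$ generic is itself $(n-1)$-dimensional in $\partial L_t(a)$ (by Proposition \ref{random crap}(3) applied to $\pot$, or directly from $\dim\partial L_t(a)=n-1$ via \cite{johns}), and each contributes a neighbourhood in $l$-direction of $b$'s with finite $(b,a)_{\pot}$; fibering over these and adding one $\pot$-direction of genuine spread recovers an $n$-dimensional family. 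More efficiently: the complement $L_t(a)\setminus T$ is the set of $b$ with $(b,a)_{\pot}$ infinite, which — were it $n$-dimensional and containing a generic $b$ — would by the first part (with $b$ in place of $c$, after passing to a generic boundary point of $(b,a)_{\pot}$) produce an infinite intermediate chain contradicting the finiteness we proved; hence $\dim(L_t(a)\setminus T)<n$, and since $\dim L_t(a)=n$ (as $a$ is generic and $L_t(a)$ is non-algebraic $\CN$-definable), additivity of dimension gives $\dim T=n$, as required.
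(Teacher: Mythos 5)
Your treatment of the first assertion is essentially the paper's argument: any $d$ with $c\pot d\pot a$ has $c\in L_t(d)$ (directly from $c\pot d$; no generic closedness is needed for this) while points outside $L_t(a)\supseteq L_t(d)$ accumulate at $c$, so $c\in\partial L_t(d)$, whence $d\nind^{\CM} c$ and the $\CN$-definable set $\{z\mid c\pot z\pot a\}$ has dimension $<n$, hence is finite by $\CN$-minimality. Your version by contradiction through a point generic over $\{a,c\}$ is the same computation, and the step you flag as ``the main obstacle'' is in fact immediate.

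The second assertion is where the proposal has a genuine gap. Your ``more efficient'' route asserts $\dim\bigl(L_t(a)\setminus T\bigr)<n$, where $T:=\{b\in L_t(a)\mid |(b,a)_{\pot}|<\infty\}$, justified by ``the first part with $b$ in place of $c$, after passing to a generic boundary point of $(b,a)_{\pot}$''. But the first part applies only to points of $\partial L_t(a)$ that are generic over $\0$; a generic $b\in L_t(a)$ is typically an interior point of $L_t(a)$, and nothing proved so far forbids such a $b$ from having $(b,a)_{\pot}$ infinite. Passing to a generic boundary point of $(b,a)_{\pot}$ does not return you to the hypotheses of part one (that point need not lie on $\partial L_t(a)$, need not be generic over $\0$, and produces no contradiction). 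Indeed, what you assert is essentially the configuration the rest of the subsection is designed to exclude: right after the claim, the paper derives its contradiction precisely from the situation in which every $b\in L_t(a)$ generic over $a$ has $(b,a)_{\pot}$ infinite, by playing it against the claim --- so this cannot also be a quick corollary of part one. Your first sketch (``fibering \dots adding one $\pot$-direction of genuine spread'') is too vague to count as a proof. The intended argument is much shorter and you miss it: since $c$ is generic and lies in $\cl(L_t(a))$, generic closedness of the lower $\pot$-cones gives $c\pot a$; by the first assertion $(c,a)_{\pot}$ is finite, so $c\in T$. As $c$ is generic in the infinite set $\partial L_t(a)$, it is non-algebraic over the parameters defining $T$, so $T$ is infinite; $T$ is $\CN$-definable, so $\CN$-minimality (the standing assumption of the section) yields $\dim T=n$ at once --- no dimension bound on the complement is needed, and none is available at this stage.
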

\begin{proof}
As we have already showed we may assume that $\pot$ is generically
closed (i.e., that $L_t(x)$ is generically closed for all generic
$x$). Assume that $a,c$ are as above and that $c/\0$ is generic,
therefore $c\nind^{\CM}a$. Hence, $\dim\{a'\mid c\in \partial
L_t(a')\} <n$ and $c\in L_t(a)$.

By construction, if $c\pot d\pot a$ then $c\in \partial L_t(d)$;
so for any such $d$ we have $c\nind^\CM d$ which implies
$\dim(\{z\mid c\pot z\pot a\})<n$. But $\{z\mid c\pot z\pot a\}$
is $\CN$-definable so by assumption it must be finite.

Since $b$ was generic in $\partial L_t(a)$ we get that $\{b\in
L_t(a)\mid |(b,a)_{\pot}|<\infty\}$ is infinite, so it must be
$n$-dimensional.
\end{proof}

On the other hand, if $a\in N$ is generic and $a$ is not isolated
in $L_t(a)$ then for every $b\in L_t(a)$ generic over $a$ it must
be that $a\in \Int G_t(b)$. Thus the $\pot$-interval
$(b,a)_{\pot}$ is infinite, contradicting the previous claim.

\bigskip

We can now turn to showing that there is some partial order with
infinite chains $\pot$ and a generic $a$ such that $a$ is not an
isolated point in $L_t(a)$. As in the previous proof, we fix $b$
generic inducing a partition on $l$ maximal among all generic $b'$
such that $(b',l)\in \mathcal B$. We also fix generic $a_j\in
I_j(b)$. Denote $A$ the collection of the $a_j$ and
\[
R_b(x,A): = \left(\bigwedge_{a_j\in A} (a_j\in G(x) \iff a_j\in G(b))\right ).
\] 
Say that $x$ is good for $A$ (with respect to $b$)
if $\models R_b(x,A)$. Observe that, since $b$ was generic and the
$a_i$ were chosen independent of $b$ (over all the data), if $x$
is good for $A$ with respect to $b$ it is good for $A$ with
respect to any $b'$ close enough to $b$.

For each $a_i\in A$ let $B_i\ni a$ be an open box such that $B_i
\subseteq G(b)$ if $a_i\in G(b)$ and $B_i\cap G(b) =\0$ otherwise.
Let $l'$ be a line parallel to $l$ such that $l'\cap B_i \neq \0$
for all $i$. Choosing $l'$ close enough to $l$ we may assume that
$G(b)$ induces a maximal non-constant partition of $l'$ (as usual,
among all generic $b'$ such that $(b',l)\in \mathcal B$). Choose
$a'_i\in B_i \cap l'$ witnessing this and denote this set of
points $A'$. It will be convenient to take $A'$ to be the
projection of $A$ to $l'$. 

We slightly change our definition of $\pot$. Instead of requiring
that $x_1\pot x_2$ if and only if the set of $y\in Z$ such that
$y\po x_1 \wedge y\not\po x_2$ is finite (having $Z$ defined as
above) we replace $Z$ by $Z'$ where $Z':=\{z\in Z\mid\, \models
R_{b'}(z,A')\}$. I.e. we restrict ourselves to the set $Z'$ of
$b'$ such that $G(b')$ induces a non-constant maximal partition on
both $l$ and $l'$ and $b'$ is good for both $l$ and $l'$ (with
respect to $b$ in both cases). Because $b$ is generic over all the
data, $Z'$ contains a small open neighbourhood of $b$ which must be
a $2$-dimensional set ($n$-dimensional in the general case). Let
$x\in
\partial G(b)$ be generic over all the data (so in particular not
on $l,l'$) such that $x$ lies between $l$ and $l'$ (see Figure 2).

\begin{center}
\includegraphics[height=2in,width=4in]
{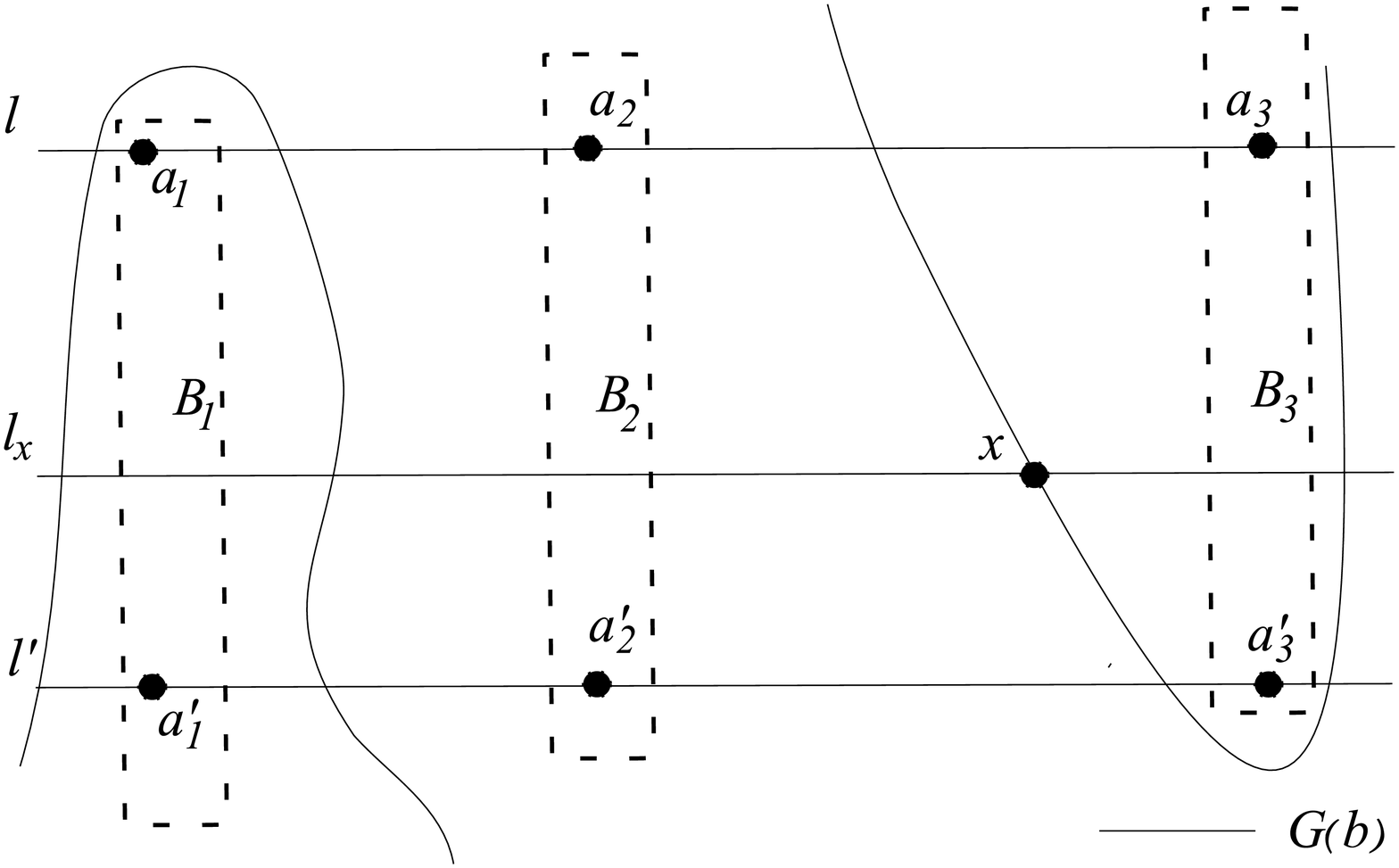}\\
Figure 2
\end{center}

\noindent Let $l_x$ be the line through $x$
parallel to $l,l'$. We would like to show that
\begin{equation}
\{y\in l_x\mid x \pot y\} \mbox{\,\,has $x$ in its
boundary}.\tag{$**$}
\end{equation}
This will be enough ($b$ was taken to be generic and $x$ is a generic point
in $\partial G(b)$, so moving $b$ we will get a 2-dimensional set
of $x$ with the required property). In the general $n$-dimensional
case we would have to repeat the same process several times, but
the main idea is unaltered.

We do not claim that this will be outright true, but we will now
start a process which will provide the desired result. Throughout,
when working within a line (parallel to one of the axis) we will
use the natural induced order.

Assume $(**)$ is not true. By definition $x\in \partial G(b)$ and
for concreteness assume that for all $x < y$ in $l_x$ there is
$z\in l_x$ with $x< z < y$ such that $z\in G(b)$ (as in the figure above). Since ($**$) is
assumed not to hold, for all $y > x$ there exists some $z\in l_x$ between $x$ and
$y$ such that $z\notin G_t(x)$. By the definition of $\pot$ this
means that there are infinitely many $b'\in Z'$ such that $x\in
G(b')$ but $z\notin G(b')$. Fix any such $\CM$-generic $b'$.

Let $(a_i,a_i')$ be the line segment between $a_i$ and $a_i'$.
Observe that if $a_i\in G(b)$ then by choice of $l'$ and $a_i'$
the segment $(a_i, a'_i)$ is contained in $B_i$ (and therefore in
$G(b)$). Similarly, $(a_i,a'_i)\cap G(b) = \0$ if $a_i\not\in
G(b)$. Since $b'$ was chosen generic over all the data, we also
know that $G(b')$ contains open neighbourhoods of $(a_i,a'_i)$ or
open neighbourhoods disjoint from $G(b')$.

Assume first that for all $i$ we have $(a_i,a_i')\subseteq G(b')$
whenever $a_i\in G(b)$ and $(a_i,a_i')\cap G(b')=\0$ otherwise.
This implies that $l_x$ has more sign changes (witnessed by
$G(b')$) than $l$ did (with respect to $b$) and it is easy to
verify that $b'$ (and $z$) can be chosen so that the partition is
not constant. But $l$ was chosen so that the size of a maximal
non-constant partition (with respect to a generic element $b$) is
maximal, so we would have a contradiction.

So we may assume that for some $i$ either $a_i\in G(b)$ and
$(a_i,a_i')\not\subseteq G(b')$ or $(a_i,a_i')\cap G(b')\neq \0$
for some $a_i\not\in G(b)$. For concreteness we will assume the
former holds (see Figure 3) and let $a''\in (a_i, a_i')$ be such that
$a_i''\not\in G(b')$ be generic over all the data.

We will now change our definition of $\pot$ once more replacing
$Z'$ with $Z'':=\{z\in Z'\mid a''\notin G(z)\}$. Take $l''$ close
enough to $l$ so that $G(b')$ induces on $l''$ a maximal
non-constant partition. Now restart the whole process with
$l,l''$, $G(b')$ and $Z''$. By o-minimality, this process of
restricting $Z$ to obtain more intervals in $G(b)\cap (a_i,a_i')$
cannot go on forever. So after finitely many such changes we are
reduced to the case where $(a_i,a_i')\subseteq G(b')$
if $a_i\in G(b)$ and $(a_i,a_i')\cap G(b') = \0$ otherwise, which
we already proved contradicted the fact that $x$ was a non
isolated point in $L_t(x)\cap l_x$. This concludes the proof of the theorem.

\begin{center}
\includegraphics[height=2in,width=4in]
{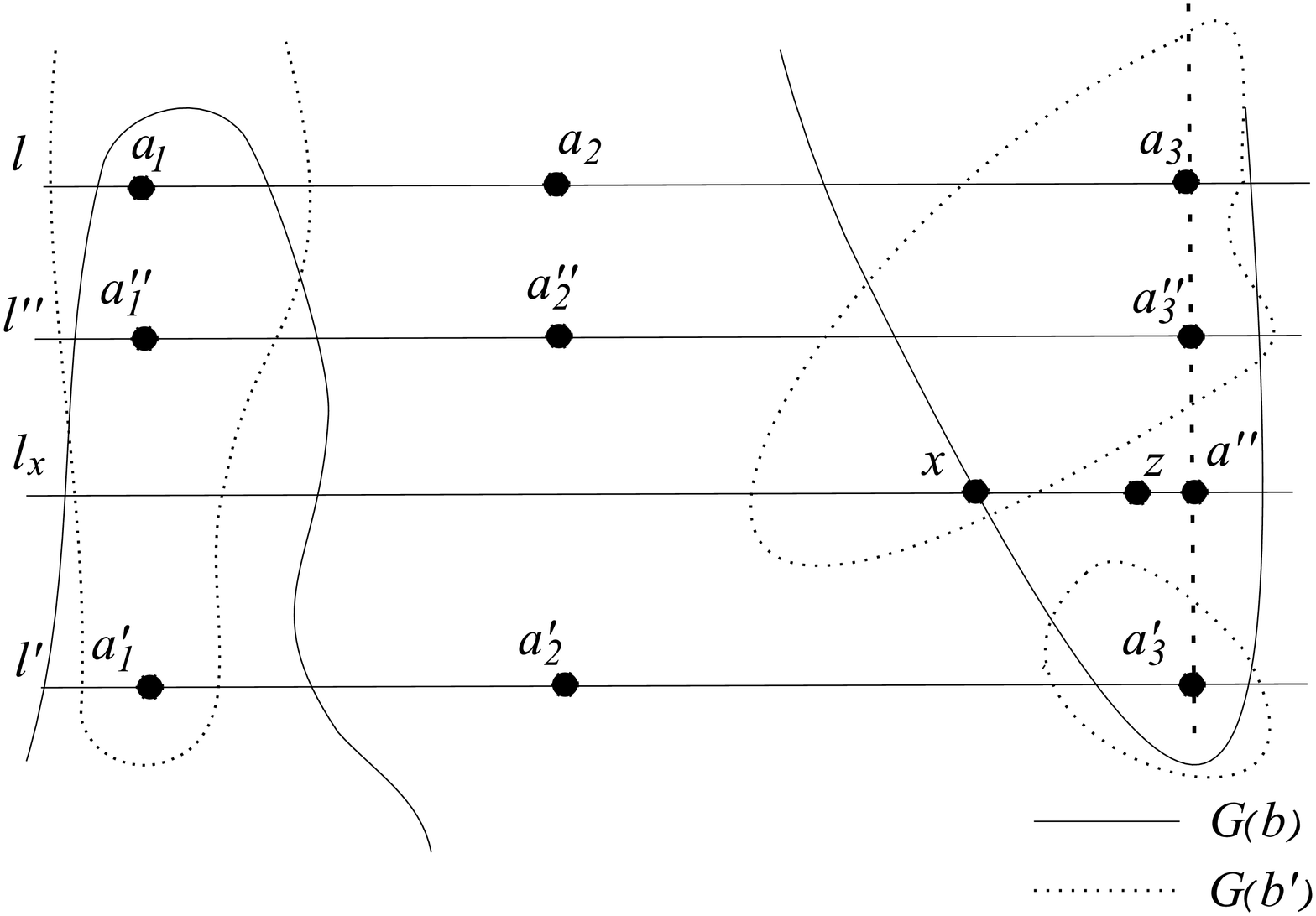}\\
Figure 3
\end{center}

\bigskip

\noindent The results obtained thus far imply by induction that we can define
a finite by o-minimal set in any purely unstable structure (see definition \ref{punstable}) interpretable in an o-minimal theory, proving the first part of Theorem \ref{mainintro}. This theorem cannot be outright strengthened as the following example demonstrates. 
Consider the structure $\CN:= (\mathbb R^2,\prec)$
where $(x_1,x_2) \prec (y_1,y_2)$ is interpreted as $x_1 < y_1$.
So $\CN$ is unstable of o-minimal dimension 2. It is an easy
exercise to check that any infinite definable subset of $N$ is
either 2-dimensional or stable, and that the only way to obtain an
o-minimal structure in $\CN$ is to work in $\CN^{eq}$. 

The above example is not the only obstacle on the way of completing the proof of Theorem \ref{interpreting}. Towards that end we will also need to improve the results of this section in order to find an \emph{unstable} $\CN$-definable set of
small $\CM$-dimension. Before proceeding to this task, we conclude the present section with a discussion of the second part of Theorem \ref{mainintro}: 

\begin{corollary}\label{localmain}
 Let $\CN$ be interpretable in an o-minimal $\kappa$-saturated structure $\CM$ and $p\in S^{\CN}(N_0)$ for some $N_0\prec N$ with $|N_0|<\kappa$. Then there exists a non-algebraic $p\subseteq q\in S^{\CN}(N)$ which is either strongly stable or finite by o-minimal. 
\end{corollary}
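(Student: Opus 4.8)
The plan is to obtain Corollary~\ref{localmain} --- which is essentially the local half of Theorem~\ref{mainintro} --- by induction on
\[
d(p):=\min\{\dim_\CM\theta(\CN)\mid\theta\in p\},
\]
using Theorem~\ref{theorem} for the one-dimensional ground case and Theorem~\ref{dimension down} for the reduction of $\CM$-dimension. Adding to $p$ a formula achieving the minimum, we may assume every $\theta\in p$ defines a set of $\CM$-dimension $d(p)$. First peel off the stable case: if some $\theta\in p$ has $\theta(\CN)$ stable, extend $p\cup\{\theta\}$ to any non-algebraic $q\in S^\CN(N)$ (possible since $\theta(N)$ is infinite); this $q$ is strongly stable, since any $q'\supseteq q$ with $\psi(x,y)$ defining a quasi order with infinite chains in $q'$ would, after making a $\psi$-increasing chain of realizations of $q'$ (all inside $\theta(\CN)$) indiscernible by Ramsey, witness the order property for $\psi(x,y)\wedge\theta(x)$, contradicting stability of $\theta(\CN)$ and Theorem~\ref{stable}. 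So assume henceforth that every $\theta\in p$ defines an unstable set; the goal becomes a finite-by-o-minimal extension.

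Now reduce to a generic situation. If $p$ is consistent with some $\CN$-definable set of $\CM$-dimension $<d(p)$, complete $p$ together with a formula for that set over a small $N_0'\prec N$ containing the relevant parameters ($|N_0'|<\kappa$) and finish by the inductive hypothesis ($d$ drops strictly, and $d\ge1$ throughout, $p$ being non-algebraic). Otherwise $p$ meets no lower-dimensional $\CN$-definable set, so $p$ is one of the finitely many $\CM$-generic types of the $d(p)$-dimensional $\CN$-definable set $\theta_0(\CN)$ on which it concentrates; replacing $\CN$ by the (still $\CM$-interpretable) structure induced on $\theta_0(\CN)$ and $N_0$ accordingly, and absorbing further lower-dimensional pieces in the same way, we land (possibly after iterating these reductions) in the $\CN$-minimal setting of Sections~\ref{dimension1} and~\ref{dimdown}, with $\dim_\CM N=d(p)$ and $p$ an $\CM$-generic type of $N$.

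Split on $d(p)$. If $d(p)=1$: by Theorem~\ref{shelah} fix an $\CN$-definable quasi order $\po$ with infinite chains on $N$ and a monotone representation; pigeonholing on an increasing $\po$-chain, some cell carries infinite $\po$-chains, so the $\CN$-definable union $Y$ of all such cells is nonempty. If ``$x$ is $\CM$-generic in $Y$'' is consistent with $p$, pick such a realization $a$ and apply Theorem~\ref{theorem}: it produces a $\po$-definable infinite $X'\ni a$, with $a$ generic in $X'$, and a definable order $<'$ with $(X',<')\equiv(X',<)$; then $X'$ is finite-by-o-minimal, and the generic type $q$ of $X'$ over $N$ extends $p$, is non-algebraic, contains the finite-by-o-minimal formula ``$x\in X'$'', and is \th-minimal because $\uth$-rank agrees with dimension for generic types of o-minimal (hence of finite-by-o-minimal) structures --- so $q$ is finite-by-o-minimal. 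Otherwise $p$ concentrates on the proper, still unstable, $\CN$-definable set $N\setminus Y$; replace $N$ by it and repeat, the termination of this loop being extracted from an o-minimality-flavoured descent on the combinatorics of the orders and representations used. If $d(p)=n>1$: re-run the construction in the proof of Theorem~\ref{dimension down}, producing an $\CN$-definable order $\pot$, a generic line $l$ through $N$, and a generic $g\in(a',b')_{\pot}$ with $\ur(\tp(g/l))=\infty$ lying in an $\CN$-definable set of $\CM$-dimension $<n$; since $(a',b')_{\pot}$ has full $\CM$-dimension $n$, one may take such a $g$ that is $\CM$-generic in $N$ in the $\CM$-component carrying $p$, so $g\models p$, and then $\tp_{\CN}(g/N_0')$ over a suitable $N_0'\prec N$ extends $p$ and carries a formula of $\CM$-dimension $<n$; conclude by the inductive hypothesis.

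The main obstacle is this last localisation. The proof of Theorem~\ref{dimension down} is written so as to manufacture \emph{some} type of infinite $\ur$-rank inside a lower-dimensional set, and one must revisit it to check that the element produced can be chosen $\CM$-generic in the prescribed $\CM$-component --- hence realizing the given $p$ --- rather than landing in an uncontrolled generic type; one must also verify that the reductions of the second paragraph do not leave the regime in which that machinery applies. A secondary, more bookkeeping-heavy point is the termination of the one-dimensional loop, whose fallback is a compactness argument: a nested sequence of $\CN$-definable unstable sets in $p$ with no infinite $\po$-chains would, in the limit, force a stable formula of $p$, against our standing assumption. The \th-minimality verifications are routine, resting on the known behaviour of thorn forking in o-minimal structures.
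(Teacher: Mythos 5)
Your skeleton (induction on the $\CM$-dimension $d(p)$, a stable formula in $p$ giving a strongly stable extension, Theorem \ref{theorem} at dimension one, the machinery of Theorem \ref{dimension down} above dimension one) is the same as the paper's, but the two steps you rely on to make the higher-dimensional case \emph{local} are exactly where the difficulty lies, and neither is repaired. First, the reduction ``otherwise $p$ meets no lower-dimensional $\CN$-definable set, so \dots absorbing further lower-dimensional pieces \dots we land in the $\CN$-minimal setting'' does not work: the hypothesis only says that lower-dimensional $\CN$-definable sets avoid $p$, not that there are none; they may come in infinite definable families, so they cannot be removed one by one, and the structures you pass to need not be $\CN$-minimal --- hence Sections \ref{dimension1}--\ref{dimdown} do not apply to them as stated. (Also, ``$p$ meets no lower-dimensional $\CN$-definable set'' does not make $p$ an $\CM$-generic type of $\theta_0(\CN)$: realizations of $p$ may lie on $\CM$-definable, non-$\CN$-definable sets of small dimension.) Second, even granting the construction of $\pot$, $l$ and $g$ with $\ur(\tp(g/l))=\infty$ inside a set of dimension $<n$, the inference ``one may take such a $g$ that is $\CM$-generic in the $\CM$-component carrying $p$, so $g\models p$'' is a non sequitur: $p$ is a complete $\CN$-type over the model $N_0$, and $\CM$-genericity in a component pins down neither the $\CN$-type over $N_0$ nor even consistency with $p$. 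You yourself flag this as ``the main obstacle'' to be revisited; but that obstacle is precisely the content of the corollary beyond Theorem \ref{mainintro}, so as written the $d(p)>1$ case has a genuine gap.

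The paper's sketch resolves it differently: it never reduces to $\CN$-minimality, but reruns the Section \ref{dimdown} construction uniformly in the formulas $\phi\in p$, inside the structures $\CN_\phi$ with universe $\phi(N)$. The key points are: (i) the formula $(\dagger)$ defining $\pot$ depends on $\CN_\phi$ only through parameters and through the maximal size of a non-constant partition of a line, and by $\kappa$-saturation one can work with a line in an infinitesimal neighbourhood (over $N_0$) of a realization of $p$, making that bound uniform in $\phi$; (ii) the target is a single formula $\psi(x,y)$ such that for every $\phi\in p$ some $c$ gives $0<\dim(\psi(x,c)\land\phi(x))<n$, whence, by definability of o-minimal dimension and compactness, one $c$ works for all $\phi\in p$ simultaneously --- this is what makes the dimension-lowering set \emph{consistent with $p$}, the step your proposal never secures; (iii) the only uses of $\CN$-minimality are in Lemma \ref{close section 2} and Proposition \ref{random crap}, and if either fails in cofinitely many $\CN_\phi$, its very failure yields an infinite set of dimension $<n$ meeting every $\phi\in p$ (e.g.\ a set of the form $Z\cap(L(c)\setminus L(a))\cap N_\phi$ with $Z$ from $(\dagger)$ independent of $\phi$), so the induction still goes through. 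Your stable case is fine, and the $d(p)=1$ case is essentially the paper's (though the termination of your loop is hand-waved, where the paper simply applies Theorem \ref{theorem} to an unstable extension of $p$ containing a definable order with infinite chains); the higher-dimensional case, however, needs the uniformity-plus-compactness argument, not a reduction to $\CN$-minimality.
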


Since this result will not be used below we only give a sketch of the proof. If $p$ is strongly stable every non-algebraic $q\supseteq p$ is strongly stable and we have nothing to do. So we may assume that $p$ is weakly unstable. by extending $N_0$ if needed (preserving the cardinality) we may assume that $p$ is unstable, and by saturation we may also assume that there is an $\CN$-definable partial order $\po$ with infinite chains in $p$. If $\mdim p = 1$ the result follows from Theorem \ref{theorem}. So we may assume that $\mdim p = n > 1$. To simplify things we will assume that $\po$ is $\0$-definable in $\CN$. For every $\phi\in p$ denote $N_{\phi} := \{a\mid a \models \phi(x)\}$ and $\CN_{\phi}$ the $\po$-structure whose universe is $N_{\phi}$ and where $\po$ is interpreted as its restriction therein. Clearly, $\CN_{\phi}$ is unstable. 

Our goal is to show that there exists a $\po$-formula $\psi(x,y)$ over $\0$ such that for all $\phi\in p$ there exists $c\subseteq N_{\phi}$ satisfying $0 < \dim (\psi(x,c) \land \phi(x)) < n$. By compactness (and the definability of o-minimal dimension) this would imply the existence of some $c$ such that $0 < \dim (\psi(x,c) \land \phi(x)) < n$ for all $\phi \in p$. In particular, we may assume that $0 < \dim \psi(x,c) < n$. By assumption $\psi(x,c) \cup p$ is infinite, with the desired conclusion following by induction. 

The starting point of the proof is the observation that, assuming Lemma \ref{close section 2} and Proposition \ref{random crap}, the proof of Theorem \ref{dimension down} assures that there are $a,b$ such that $0 < \dim (a,b)_{\pot} < n$. Note also that the formula defining $\pot$ (given in ($\dagger$) above) depends on $\CN$ only by the use of parameters and, more significantly for us, by determining the maximal possible size of a non-constant partition of a line through $\CN$. We leave it as an exercise to verify that by saturation of $\CM$, working in an infinitesimal (with respect to $N_0$) neighbourhood of some (generic) $e\models p$ we can find a line $l$ so small that the maximal size of a partition of $l$ in $\CN_{\phi}$ is uniformly bounded (i.e. does not depend on $\phi$). Thus, under the assumption that Lemma \ref{close section 2} and Proposition \ref{random crap} hold in any $\CN_{\phi}$, the plan described in the previous paragraph can be carried out. 

So we only have to take care of the case that one of the above propositions does not hold in cofinitely many of the $\CN_{\phi}$. If Proposition \ref{random crap} is the one that fails then, inspecting its proof, we get that there are $a,b$ such that $0 < \dim ((a,b)_{\po} \cap \phi(\CM)) < n$ with the desired conclusion. If Lemma \ref{random crap} is the one causing problems we need to note that the lemma is only used to show that $\pot$ (which, as we have observed, can be defined uniformly) is generically closed. If the proof of that statement fails, there must be some infinite set of the form $Z\cap (L(c)\setminus L(a))\cap N_\phi$ which is of small dimension, for the definable set $Z$ appearing in $(\dagger)$. Since $Z$ does not depend on $\phi$ the conclusion follows. 

\section{Interpreting an o-minimal structure}\label{interpretingsec}

In this section we complete the induction introduced in the previous section to conclude that any unstable theory
interpretable in an o-minimal structure interprets itself an
o-minimal structure. As the example preceding Corollary \ref{localmain} illustrates, in order to achieve this we cannot avoid
working in $\CN^{eq}$. Although it seems plausible that our
argument could be carried out to $\CM^{eq}$ this would require
some additional technical tools. In order to avoid such
technicalities we will assume from now on that $\CM$ eliminates
imaginaries.

We will work with both stable formulas and definable stable
sets. To prevent any confusion we will use ``stable'' for
definable stable sets and refer to stable formulas as formulas
satisfying NOP.

\subsection{Preliminaries}

We need the some definitions and results from \cite{onshuus}
and \cite{onshuusthesis}.

\begin{definition}\label{thorking}
A formula $\delta (x,a)$ \emph{strongly divides over $A$} if
$\tp(a/A)$ is non-algebraic and $\{ \delta (x,a') \}_{a'\models
\tp(a/A)}$ is $k$-inconsistent for some $k\in \mathbb{N}$.
 $\delta (x,a)$ \emph{\th-divides over $A$} if we can
find some tuple $c$ such that $\delta (x,a)$ strongly divides over
$Ac$.

\end{definition}

Standard forking terminology generalises naturally to
\th-forking. For example, a formula \th-forks over a
set $A$ if it implies a finite disjunction of formulas 
\th-dividing over $A$. In particular, $\uth$-rank is
the foundation rank of the partial order (defined on complete
types) $p<_\tho q$ defines as ``$p$ is a \th-forking extension of $q$''.

The \th-rank of a formula is the analogue of the global rank in
simple theories. That is, \th$(\varphi(x,b))\geq \alpha+1$ if
there is $\psi(x, c) \vdash \varphi(x, b)$ \th-dividing over $b$
with \th$(\psi(x,c))\geq \alpha$.

\begin{fact}
Let $\CN$ be definable in an o-minimal structure
$\CM$, let $\phi(x,b)$ be $\CN$-definable and let $p(x)\in S_n^{\CN}(N)$. Then both $\tho (\phi(x,b))$ and $\uth(p(x))$ are
finite.
\end{fact}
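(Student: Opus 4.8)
The plan is to bound both ranks by the o-minimal dimension, which is finite since $N$ is a definable subset of some $M^k$. Concretely, I would prove $\tho(\phi(x,b))\le\dim_{\CM}(\phi(\CM,b))$ for every $\CN$-definable $\phi(x,b)$, by induction on $\dim_{\CM}(\phi(\CM,b))$. Once this is known, $\tho(\phi(x,b))$ is finite for every formula, and for a type $p\in S_n^{\CN}(N)$ one picks $\phi\in p$ realizing the minimum $\dim_{\CM}(\phi(\CM,\bar n))=\dim_{\CM}(p)$ and uses the standard inequality $\uth(p)\le\tho(\phi)$ (valid exactly as in simple theories: a \th-forking chain of extensions of $p$ passing through $\phi$ produces a \th-dividing chain of formulas below $\phi$). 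This gives $\uth(p)\le\dim_{\CM}(p)<\omega$. Throughout, $m_0$ denotes a fixed finite tuple over which the interpretation of $\CN$ in $\CM$ is defined.

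For the induction the case $\dim_{\CM}(\phi(\CM,b))=0$ is immediate: then $\phi(\CM,b)$ is finite, so $\phi(x,b)$ is algebraic and $\tho(\phi(x,b))=0$. For the step, let $\dim_{\CM}(\phi(\CM,b))=d\ge1$, assume the claim for all formulas of smaller o-minimal dimension, and let $\psi(x,c)\vdash\phi(x,b)$ be any formula \th-dividing over $b$; it suffices to show $\tho(\psi(x,c))\le d-1$. By definition of \th-dividing there is a tuple $w$ with $\tp_{\CN}(c/bw)$ non-algebraic and $\{\psi(x,c')\}_{c'\models\tp_{\CN}(c/bw)}$ $k$-inconsistent for some $k$; by compactness pick an $\CN$-formula $\chi(y)\in\tp_{\CN}(c/bw)$ with $\{\psi(\CM,y)\}_{y\models\chi}$ $k$-inconsistent and with $\emptyset\neq\psi(\CM,y)\subseteq\phi(\CM,b)$ for all $y\models\chi$ (absorbing into $\chi$ the formulas ``$\exists x\,\psi(x,y)$'' and ``$\forall x(\psi(x,y)\to\phi(x,b))$'' of $\tp_{\CN}(c/bw)$), and among all such $\chi$ take one of least o-minimal dimension $\dim_{\CM}(\chi(\CM))=:d_\chi$.

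The dimension count is carried out on the $\CM$-definable set $Y:=\{(y,x)\mid\chi(y)\wedge\psi(x,y)\}$. Projecting to the $x$-coordinates, the image lies in $\phi(\CM,b)$ and, by $k$-inconsistency, every fibre has fewer than $k$ points, so $\dim_{\CM}(Y)=\dim_{\CM}(\pi_x(Y))\le d$. Projecting to the $y$-coordinates, the image is $\chi(\CM)$, which is infinite (a non-algebraic type has infinitely many realisations), so $d_\chi\ge1$, and additivity of o-minimal dimension gives that the generic fibre $\psi(\CM,y)$ of $\pi_y$ has dimension at most $d-d_\chi\le d-1$. Hence $\dim_{\CM}(\psi(\CM,c_0))\le d-1$ for every $c_0\models\chi$ that is $\CM$-generic in $\chi(\CM)$ over $bwm_0$. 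To conclude I would show such a $c_0$ can be found already among realisations of $\tp_{\CN}(c/bw)$: otherwise every realisation lies in an $\CM$-definable-over-$bwm_0$ subset of $\chi(\CM)$ of smaller o-minimal dimension, and a compactness argument using the $\CM$-saturation of $\CM$ and the definability of o-minimal dimension produces a single $\CN$-formula of $\tp_{\CN}(c/bw)$ of o-minimal dimension $<d_\chi$ still witnessing the $k$-inconsistency, contradicting the minimal choice of $\chi$. Fixing such $c_0$, we have $c_0\equiv_{\CN}c$ over $\emptyset$, hence $\tho(\psi(x,c))=\tho(\psi(x,c_0))$ by invariance of \th-rank under automorphisms; since $\dim_{\CM}(\psi(\CM,c_0))\le d-1$, the induction hypothesis gives $\tho(\psi(x,c_0))\le d-1$, and we are done.

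I expect the genuine obstacle to be exactly the last point — producing a realisation of $\tp_{\CN}(c/bw)$ that is $\CM$-generic in $\chi(\CM)$ over $bwm_0$ — since non-algebraicity of an $\CN$-type does not by itself keep $c$ out of $\acl_{\CM}(bwm_0)$ (one only has $\acl_{\CN}\subseteq\acl_{\CM}$, possibly properly), so one cannot simply take $c$ itself; it is the minimal-dimension choice of the witnessing formula $\chi$, together with saturation of $\CM$, that repairs this. Everything else is routine bookkeeping with the definitions of \th-rank and \th-forking from \cite{onshuus,onshuusthesis} and the two properties of the o-minimal ambient structure used above, namely definability and additivity of o-minimal dimension.
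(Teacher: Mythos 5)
Your argument is correct, but it takes a genuinely different route from the paper. The paper's proof is a two-line transfer argument: any instance of \th-forking in $\CN$ yields an instance of \th-forking in the ambient o-minimal $\CM$ (this is the reduct-transfer principle from the \th-forking literature cited in \cite{onshuus}, \cite{onshuusthesis}), so $\tho$ of an $\CN$-definable set is bounded by the \th-rank of the corresponding $\CM$-definable set, which is at most its o-minimal dimension; then, exactly as you do, $\uth(p)\le\tho(p):=\min\{\tho(\phi):\phi\in p\}$ finishes the type case. You instead prove $\tho_{\CN}(\phi(x,b))\le\dim_{\CM}\phi(\CM,b)$ directly by induction on dimension, via the fibre-dimension count on $Y=\{(y,x)\mid\chi(y)\wedge\psi(x,y)\}$, and you handle the one genuinely delicate point --- that non-algebraicity of $\tp_{\CN}(c/bw)$ does not prevent $c\in\acl_{\CM}(bwm_0)$, so the witnessing parameter cannot simply be assumed $\CM$-generic --- by the minimal-dimension choice of the witnessing $\CN$-formula $\chi$ together with compactness and definability of dimension. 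That device is, in effect, a self-contained local re-proof of the transfer principle the paper imports wholesale; what you lose in length you gain in making explicit exactly which o-minimal inputs (additivity and definability of dimension, saturation) are used, and your diagnosis of where the real obstacle lies matches the substance of the cited background. Two small points of bookkeeping: the bound ``every fibre of $\pi_x$ has fewer than $k$ points'' uses the reading of $k$-inconsistency in which any $k$ \emph{distinct parameters} give an inconsistent conjunction (the intended one in Definition \ref{thorking}), and the compactness step extracting $\chi$ from the type-indexed family must carry the corresponding distinctness conditions; both are routine but worth stating if you write this up in full.
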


\begin{proof}
Any instance of \th-forking in $\CN$ implies an instance of
\th-forking in $\CM$ so the \th-rank of any $\CN$-definable set
will be bounded by the dimension of the corresponding set in
$\CM$. This gives a finite bound for the global \th-rank for
structures interpretable in o-minimal theories which depends only
on the arity of the variable $x$.

As in superstable theories (see \cite{pillaybook}), if we define
\[
\tho(p(x)):=\min\left\{ \tho\left(\phi\left(x,b\right)\right)
\left|\right. \phi\left(x,b\right)\in p\left(x\right)\right\}
\]
then

\[
\uth(p(x))\leq \tho(p(x))
\]
which completes the proof.
\end{proof}
\medskip

\noindent The following is Theorem 5.1.1 in \cite{onshuus}:

\begin{fact}\label{thesis}
If $\phi(x,y)$ satisfies NOP and there is a $\phi$-formula
witnessing that $\tp(a/bc)$ forks over $c$, then there is a
$\phi$-formula witnessing that $\tp(a/bc)$ \th-forks over $c$.
\end{fact}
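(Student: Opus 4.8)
The plan is to run the local stable-forking analysis relative to $\phi$ and to read off a \th-dividing formula from the canonical base of the relevant $\phi$-type. Concretely, I would first isolate the $\phi$-type: because $\phi$ satisfies NOP the local stability theory of $\phi$ is available, so the restriction $q$ of $\tp(a/bc)$ to $\phi$-formulas is definable (with a $\phi$-definition over $\acl^{eq}(bc)$), $\phi$-forking and $\phi$-dividing coincide, and $q$ has finite local $\phi$-rank $r$ and finite $\phi$-multiplicity. A $\phi$-formula $\psi(x,b)\in\tp(a/bc)$ which forks over $c$ witnesses that $q$ itself forks over $c$. Let $e$ be the canonical parameter of the $\phi$-definition of $q$, i.e.\ the canonical base of $q$; it lies in the definable closure of any Morley sequence of $q$, so $e\in\acl^{eq}(bc)$, and the basic criterion for $\phi$-forking gives that $q$ forks over $c$ if and only if $e\notin\acl^{eq}(c)$. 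Hence $\tp(e/c)$ is non-algebraic --- exactly the non-algebraicity clause required for strong dividing over $c$.

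Next I would choose the witnessing formula. Over $\acl^{eq}(e)$ the $\phi$-type $q$ is stationary, so $q$ contains a $\phi$-formula $\rho(x,e)$ of minimal local $\phi$-rank $r$ and minimal $\phi$-multiplicity among $\phi$-formulas in $q$; for every $e'\models\tp(e/c)$ the conjugate $\rho(x,e')$ again has $\phi$-rank $r$ and lies in the conjugate $\phi$-type $q^{e'}$ whose canonical base is $e'$. The crucial claim is that $\{\rho(x,e'):e'\models\tp(e/c)\}$ is $k$-inconsistent for a suitable finite $k$: a common realization of $\rho(x,e_1),\dots,\rho(x,e_k)$ with $e_1,\dots,e_k$ pairwise distinct realizations of $\tp(e/c)$ would, by the minimality of the rank and multiplicity of $\rho$, have a $\phi$-type that is simultaneously the stationary non-forking extension determined by each of $e_1,\dots,e_k$ over a common domain containing all of them; since distinct canonical bases yield distinct $\phi$-types this is impossible once $k$ is larger than a bound read off from $r$ and the multiplicity. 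Granting the claim, $\rho(x,e)$ strongly divides over $c$ (using the non-algebraicity of $\tp(e/c)$), hence \th-divides over $c$, and $\rho(x,e)\in\tp(a/bc)$; replacing $\rho(x,e)$ by the disjunction of its finitely many $bc$-conjugates (or first passing to a small model $M\supseteq c$ with $M\ind_c ab$) turns this into a genuine $\phi$-formula over $bc$, lying in $\tp(a/bc)$ and \th-forking over $c$, as required.

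The step I expect to demand the real work is the $k$-inconsistency claim --- the passage from ``distinct canonical bases are separated by some $\phi$-formula'' to the uniform statement that one fixed $\rho$ has $k$-inconsistent conjugates over $\tp(e/c)$. This is exactly where the finiteness of the local $\phi$-rank and of the $\phi$-multiplicity is indispensable: it is what bounds $k$ and forbids a common realization, and it is also why NOP cannot be dropped, since for a simple unstable $\phi$ the same numerology fails and $\phi$-forking need not entail \th-forking. A more structural alternative, which hides the rank computation, is to invoke that every stable theory is rosy with \th-independence equal to non-forking independence and to relativise that argument to the theory of $\phi$-types, routing the same technical core through the general framework of \cite{onshuus} and \cite{onshuusthesis}.
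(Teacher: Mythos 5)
Note first that the paper does not actually prove this Fact: it is quoted verbatim as Theorem 5.1.1 of \cite{onshuus}, so what you are offering is a proof of that cited theorem. Your skeleton is the right one and is essentially the strategy behind the cited result: pass to the $\phi$-type $q$ of $a$ over $bc$, use NOP to get definability and the canonical parameter $e$ of the $\phi$-definition, observe that forking of $q$ over $c$ forces $e\notin\acl^{eq}(c)$ (giving the non-algebraicity needed for strong dividing), and then look for a $\phi$-formula over $e$ and extra parameters whose conjugates are $k$-inconsistent; the cosmetic moves at the end (adding parameters is harmless for \th-dividing, and taking the disjunction of the finitely many $bc$-conjugates to land in $\tp(a/bc)$) are fine.

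The gap is exactly the step you flag, and as stated your $k$-inconsistency claim is false. Minimality of $R_\phi$-rank and multiplicity of $\rho(x,e)$ \emph{inside $q$} does not prevent distinct conjugates $\rho(x,e_1),\dots,\rho(x,e_k)$ from having a common realization: such a realization may simply have $\phi$-rank smaller than $r$, so it is not forced to realize the nonforking extension determined by any $e_i$, and no clash of canonical bases arises. Concretely, in $ACF_0$ let $\phi(x_1,x_2;y)$ be $x_2=yx_1$, $c=\emptyset$, $b$ generic, and $a$ generic on the line $x_2=bx_1$; then $e$ is interdefinable with $b$, the hypotheses of the Fact hold (e.g.\ $x_2=bx_1\wedge\neg(x_2=(b+1)x_1)$ is a $\phi$-formula over $b$ witnessing forking over $\emptyset$), and $\rho(x,e):=(x_2=bx_1)$ has minimal $R_\phi$-rank and multiplicity $1$ in $q$ --- yet every finite family of its conjugates is consistent, since all the lines pass through the origin, and this persists over any enlarged base, so this $\rho$ never strongly divides at all. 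The Fact is still true there, but the witness must be a corrected formula such as $x_2=bx_1\wedge\neg(x_2=b^{*}x_1)$ for a new parameter $b^{*}$: one has to remove, over additional parameters and staying within $\phi$-formulas, the low-rank locus common to many conjugates, and prove in general (the common locus need not be a point; one argues by induction on the local rank) that such a correction always exists. That removal argument is the actual content of Theorem 5.1.1 of \cite{onshuus}, and it is the piece missing from your proposal; your selection criterion of minimal rank and multiplicity cannot substitute for it, since in the example above both the punctured line and the full line meet that criterion while only the former works.
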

\noindent and 
\begin{fact}\label{withkobi}
If $T$ is dependent and  $\phi(x,b)$ is a definable stable set then for every formula
$\psi(x,y)$ the formula $\phi(x,b)\wedge \psi(x,y)$ satisfies NOP.
\end{fact}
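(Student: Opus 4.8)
The plan is to reduce the statement to the characterisation of stable sets contained in Theorem \ref{stable}, after which the argument is purely combinatorial and, in particular, does not use dependence again. First I would spell out the hypothesis in the form I shall use: since $\phi(x,b)$ is a stable set and $T$ is dependent, Theorem \ref{stable} applies to $\phi(\CC,b)$ and yields that there is \emph{no} formula $\rho(x,z)$ over $\CC$ for which $\rho(x,z)\wedge\phi(x,b)$ has the order property. It is worth stressing that this is the only point at which the non-independence property is used: without it one would merely know that the structure induced on $\phi(\CC,b)$ is stable, which a priori does not rule out an order configuration defined using parameters external to the set.

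Next, fix an arbitrary $\psi(x,y)$ and suppose towards a contradiction that $\chi(x,y):=\phi(x,b)\wedge\psi(x,y)$ has the order property, witnessed by indiscernible sequences $\langle a_i\rangle_{i\in\omega}$ and $\langle d_j\rangle_{j\in\omega}$ with $\CC\models\chi(a_i,d_j)$ if and only if $i<j$. Reading this off with $j=i+1$ shows $\CC\models\phi(a_i,b)$ for every $i$, so every $a_i$ lies in $\phi(\CC,b)$; hence, whenever $i\ge j$, the failure of $\chi(a_i,d_j)$ can only come from $\CC\not\models\psi(a_i,d_j)$. Therefore $\CC\models\psi(a_i,d_j)\wedge\phi(a_i,b)$ if and only if $i<j$, so the very same two sequences witness that $\psi(x,y)\wedge\phi(x,b)$ has the order property. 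Taking $\rho(x,z):=\psi(x,z)$ contradicts the stability of $\phi(\CC,b)$, and so $\chi$ satisfies NOP.

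I do not expect a genuine obstacle here; the only thing requiring care is the bookkeeping of which symbols are parameters --- one must keep the instance $b$ fixed throughout and observe that the sequences exhibiting the order property of $\chi$ (which we may take to be indiscernible over a base containing $b$) are exactly the data needed to exhibit it for $\psi\wedge\phi$. Once the definitions are aligned this is immediate, and the entire substance of the statement rests on Theorem \ref{stable}, i.e. on Shelah's analysis of stable sets in dependent theories. This is precisely the input that will let us invoke Fact \ref{thesis} inside a stable set, where every instance of forking is an instance of \th-forking.
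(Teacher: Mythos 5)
Your argument is correct, but note that the paper itself gives no proof of this fact: it is quoted as ``well known'' with a reference to \cite{onshuus-peterzil}, so there is nothing internal to compare against. Within the paper's own conventions your proof is essentially an unwinding of the definition: a definable set $\theta(x)$ is declared stable precisely when there is no formula $\rho(x,y)$ such that $\rho(x,y)\wedge\theta(x)$ has the order property, and the formula $\phi(x,b)\wedge\psi(x,y)$ \emph{is} such a conjunction with $\rho:=\psi$ and $\theta(x):=\phi(x,b)$; so your contradiction is immediate, and the intermediate step ``the failure of $\chi(a_i,d_j)$ must come from $\psi$'' is harmless but redundant, since $\chi$ and $\psi\wedge\phi(x,b)$ are literally the same formula. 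Two small points of bookkeeping. First, your appeal to Theorem \ref{stable} (and hence to dependence) is not what does the work here: with the paper's definition of stable set no use of NIP is needed at all; dependence becomes essential only if one starts from a different definition of stability of a definable set (type counting, or stability of the induced structure), and proving the equivalence of those definitions is exactly the content of the cited reference --- so your attribution is defensible but slightly misplaced. Second, if NOP is read in the classical sense (no sequences whatsoever ordered by the formula, indiscernible or not), you should add the standard Ramsey/compactness extraction of an indiscernible sequence of pairs over a base containing $b$, which you gesture at parenthetically; with the paper's indiscernible-sequence formulation of the order property even that step disappears.
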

\noindent is well known, see for example \cite{onshuus-peterzil}. Gathering all of the above, we obtain: 

\begin{corollary}\label{superstable}
In a dependent theory, if a type $p$ contains a formula defining a
stable set then $\uth(p)=\ur(p)$. In particular, if $\uth(p)$ is
finite then $p$ has finite $\ur$-rank.
\end{corollary}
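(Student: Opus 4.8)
The plan is to assemble Corollary \ref{superstable} directly from Facts \ref{thesis} and \ref{withkobi}, using the standard fact that \th-forking always implies forking. Let $p\in S(N)$ contain a formula $\phi(x,b)$ defining a stable set. Since \th-dividing (hence \th-forking) is in general a strengthening of the corresponding notion of dividing/forking, we always have $\ur(p)\le \uth(p)$; so the content is the reverse inequality $\uth(p)\le \ur(p)$, i.e. that no \th-forking extension of $p$ is ``missed'' by ordinary forking.

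First I would reduce to a statement about a single forking step: it suffices to show that whenever $q\supseteq p$ is a non-algebraic extension and $q'\supseteq q$ \th-forks over $\dom q$, then $q'$ already forks over $\dom q$. Indeed, the $\ur$-rank and the $\uth$-rank are the foundation ranks of the partial orders given by forking-extension and \th-forking-extension respectively, so if the two relations agree on extensions of $p$ the two ranks agree on $p$ (every type in play still contains $\phi(x,b)$, since that formula persists in all extensions). Thus fix such $q\subseteq q'$, say $q'=\tp(a/bc)$ with $c\supseteq\dom q$ and $q=\tp(a/c)$, and assume $q'$ \th-forks over $c$; I must produce a formula in $q'$ that forks over $c$.

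The key step is to package the stability of $\phi(x,b)$ correctly. By Fact \ref{withkobi}, for every formula $\psi(x,y)$ the conjunction $\phi(x,b)\wedge\psi(x,y)$ satisfies NOP; informally, ``the $\phi$-part of $q'$'' lives in a stable set, so every formula implying $\phi(x,b)$ satisfies NOP. Now \th-forking of $q'$ over $c$ is witnessed by some formula $\chi(x,e)\in q'$ \th-dividing over $c$; since $\phi(x,b)\in q'$ we may replace $\chi$ by $\chi(x,e)\wedge\phi(x,b)$, which still \th-divides over $c$ (adding a conjunct preserves $k$-inconsistency of a conjugate family, up to enlarging the base by $b$, which we may absorb into $c$ after noting $b\in\dcl(c)$ as $\phi(x,b)\in q\subseteq\tp(\cdot/c)$ — or simply observe $b$ is among the parameters already). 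So without loss of generality the \th-forking of $q'$ over $c$ is witnessed by a formula of the form $\theta(x,y)$ with $\theta(x,y)\vdash\phi(x,b)$, hence $\theta$ satisfies NOP by Fact \ref{withkobi}. This is precisely the hypothesis of Fact \ref{thesis}, which (in its contrapositive direction, reading ``\th-forks'' as the weaker-than-dividing notion in the statement, and tracing the definitions) yields that $\tp(a/bc)$ forks over $c$, witnessed by a $\theta$-formula. That gives the forking step, completing the reduction.

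The second sentence of the corollary is then immediate: if $\uth(p)$ is finite, then since $\uth(p)\ge\ur(p)$ always and we have just shown they coincide when $p$ contains a stable formula, $\ur(p)$ is finite as well. The main obstacle I anticipate is bookkeeping rather than conceptual: one must be careful that the formula witnessing \th-forking can be taken to imply $\phi(x,b)$ (so that Fact \ref{withkobi} applies to \emph{it}, not merely to $\phi$ itself), and that Fact \ref{thesis} is being invoked with the base set and the NOP-formula matched up correctly — in particular that the auxiliary parameters introduced in the definition of \th-dividing do not spoil the NOP hypothesis, which is exactly why it matters that NOP is inherited by \emph{every} formula implying a stable one. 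Once that alignment is made, both facts apply essentially verbatim and the rank equality follows by the abstract foundation-rank argument.
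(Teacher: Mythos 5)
Your division of labour between the two inequalities is reversed, and the reversal breaks the argument. Since \th-forking implies forking (this is the general fact the paper simply quotes), the foundation rank built from the rarer relation is the smaller one: the inequality valid in every theory is $\uth(p)\le \ur(p)$, not $\ur(p)\le \uth(p)$ as you claim — indeed your own premise, that \th-dividing strengthens dividing, yields exactly the opposite of the inequality you draw from it. The direction that genuinely needs the stable formula is $\ur(p)\le \uth(p)$: one must convert forking extensions into \th-forking ones. Your ``single step'' reduction instead sets out to prove that \th-forking over $c$ implies forking over $c$ (the direction that is true in general without any stability hypothesis), and to do so you invoke Fact \ref{thesis} ``in its contrapositive direction''. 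That fact states: if $\phi(x,y)$ has NOP and a $\phi$-formula witnesses that $\tp(a/bc)$ \emph{forks} over $c$, then a $\phi$-formula witnesses that it \emph{\th-forks} over $c$. Its contrapositive gives nothing of the form ``\th-forking implies forking''; you are running the implication backwards, so the key step fails, and the inequality that carries the content of the corollary is never established. The closing claim ``$\uth(p)\ge \ur(p)$ always'' is likewise false in general — that is precisely the inequality which requires the stable formula — so the ``in particular'' clause is not justified as stated.

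What the paper does, and what a corrected version of your plan must do, is prove by induction on $\alpha$ that $\ur(p)\ge\alpha$ implies $\uth(p)\ge\alpha$ for every type $p$ containing a formula $\phi(x,a)$ defining a stable set: if $\ur(p)\ge\alpha+1$, take a forking extension $r\supseteq p$ with $\ur(r)\ge\alpha$, witnessed by some $\theta(x,b)$; set $\theta'(x,y):=\theta(x,y)\wedge\phi(x,a)$, which has NOP by Fact \ref{withkobi} and still forks over the base; Fact \ref{thesis}, used in its stated direction, then yields a $\theta'$-formula witnessing that $r$ \th-forks over the base, and since $r$ still contains $\phi(x,a)$ the induction hypothesis gives $\uth(r)\ge\alpha$, whence $\uth(p)\ge\alpha+1$. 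Your instinct to conjoin the witnessing formula with the stable formula so as to gain NOP is exactly right, but it must be applied to a \emph{forking} witness in order to upgrade it to a \th-forking witness, not the other way round; the easy inequality $\uth(p)\le\ur(p)$ is then obtained directly from ``\th-forking implies forking'' with no appeal to Facts \ref{thesis} or \ref{withkobi} at all.
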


\begin{proof}
Any instance of \th-forking is an instance of forking so

\[\ur(p)\geq \uth(p)\] for any type $p$. We prove the other
inequality by induction. For $\alpha =0$, $\ur(p)\geq 0$ if and only if
$p$ is consistent if and only if $\uth(p)\geq 0$. If $\ur(p)\geq
\alpha$ for $\alpha$ limit, the claim follows from the induction
hypothesis and the fact that both $\uth(p)\geq \alpha$ and
$\ur(p)\geq \alpha$ if and only if $\uth(p)\ge \delta$ (resp.
$\ur(p)\ge \delta$) for all $\delta < \alpha$.

It remains to deal with $\ur(p)\ge \alpha +1$. Assume inductively 
that for any type $q$ containing a formula defining a stable set,
if $\ur(q)\geq \alpha$ then $\uth(q)\geq \alpha$. Now let $p\in
S(A)$ contain a formula $\phi(x,a)$ defining a stable set, and
assume that $\ur(p)\geq \alpha+1$.

By definition there is some $r\supseteq p$  such that
$\ur(r)\geq \alpha$ and $r$ forks over $A$ witnessed by some
formula $\theta(x,b)$. Let $\theta'(x,y):=\theta(x,y)\wedge
\phi(x,a)$; $\theta'(x,y)$ satisfies NOP by Fact \ref{withkobi}
and clearly $\theta'(x,b)$ forks over $A$. By Fact \ref{thesis}
there is a $\theta'$-formula which witnesses that $r$ \th-forks
over $A$, by induction $\uth(r)\geq \alpha$, and by definition
$\uth(p)\geq \alpha+1$.
\end{proof}

\begin{fact}\label{algebraic}
Let $\phi(x_1,x_2)$ be such that $\models \forall x_1\exists^{\leq
n} x_2 \phi(x_1,x_2)$ for some $n\in \mathbb{N}$, and let
$\phi'(x):=\exists y \phi(x,y)$. Then the following hold.

\begin{itemize}

\item $\phi(\mathcal{C}^2)$ is a stable definable set if and only
if $\phi'(C)$ is a stable definable set.

\item $\Tho (\phi(\mathcal{C}^2))=\Tho (\phi'(C))$.
\end{itemize}
\end{fact}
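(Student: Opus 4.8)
The plan is to treat the two bullet points separately but using the same underlying device: pulling back and pushing forward along the finite-to-one projection $\pi\colon (x_1,x_2)\mapsto x_1$. For the first bullet, suppose $\phi'(x)$ defines an unstable set. By Theorem \ref{stable}, pick a formula $\psi(x,z)$ such that $\psi(x,z)\wedge\phi'(x)$ has the order property, witnessed by sequences $\langle a_i\rangle$, $\langle b_j\rangle$ with $a_i\in\phi'(\CC)$. Each $a_i$ has at least one and at most $n$ witnesses for $\phi(a_i,y)$; choose $a_i^*$ with $\models\phi(a_i,a_i^*)$. Passing to an indiscernible subsequence (Ramsey) we may assume $\langle (a_i,a_i^*)\rangle$ is indiscernible, and then $\phi(x_1,x_2)\wedge\psi(x_1,z)$ has the order property on $\phi(\CC^2)$, so $\phi(\CC^2)$ is unstable. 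Conversely, if $\phi(\CC^2)$ is unstable, an order-property configuration there projects under $\pi$ to one for $\phi'(C)$ (the projection of an indiscernible sequence is indiscernible, and the truth value of the projected formula $\exists x_2(\phi(x_1,x_2)\wedge\dots)$ is read off from the original configuration); the only subtlety is rewriting whatever formula $\chi(x_1,x_2,z)$ witnesses instability of $\phi(\CC^2)$ as a formula in $x_1$ alone, which is done by existentially quantifying $x_2$ and using that $\pi$ is at most $n$-to-one so the quantifier is bounded. This establishes the first bullet.

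For the second bullet I would argue $\Tho(\phi(\CC^2))\le\Tho(\phi'(C))$ and the reverse inequality. The easy direction is $\ge$: the formula $\phi(x_1,x_2)$ implies $\phi'(x_1)$ after projecting, and more precisely any \th-dividing witnessed inside $\phi'(C)$ can be pulled back — given $\psi(x_1,c)\vdash\phi'(x_1)$ \th-dividing over its parameters with $\Tho(\psi(x_1,c))\ge\alpha$, the formula $\psi(x_1,c)\wedge\phi(x_1,x_2)$ \th-divides over the same set (a $k$-inconsistent array in $x_1$ stays $k$-inconsistent when conjoined with $\phi(x_1,x_2)$, since a realization of the conjunction projects to a realization of $\psi$) and by induction has \th-rank $\ge\alpha$, so $\Tho(\phi(\CC^2))\ge\alpha+1$. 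For the reverse inequality, a formula $\chi(x_1,x_2,c)\vdash\phi(x_1,x_2)$ \th-dividing over $Ac$ with \th-rank $\ge\alpha$ is pushed forward to $\chi'(x_1,c):=\exists x_2\,\chi(x_1,x_2,c)$; one checks $\chi'$ \th-divides over $Ac$ (again using the at-most-$n$-to-one bound to see that $k$-inconsistency of the array $\langle\chi(x,x',c')\rangle$ forces $nk$-inconsistency, say, of $\langle\chi'(x,c')\rangle$, after possibly strong-dividing over a slightly larger base), and that $\Tho(\chi'(x_1,c))\ge\alpha$ by the induction hypothesis applied to $\chi\wedge(\text{appropriate conjunct})$. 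Then $\Tho(\phi'(C))\ge\alpha+1$.

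The main obstacle I anticipate is the bookkeeping in the \th-rank inequality, specifically making the ``push forward the \th-dividing formula'' step precise: \th-dividing is defined via strong dividing after adding an auxiliary parameter $c$, and I must make sure that the existential quantifier over $x_2$ interacts correctly both with the inconsistency bound (which it does, because $\phi$ is $\le n$-to-one in $x_2$, so a set of instances of $\chi'$ that is consistent lifts to a bounded set of instances of $\chi$, controlling the inconsistency degree) and with the choice of the extra tuple witnessing strong dividing. I would handle this by first proving the cleaner statement for strong dividing and then bootstrapping to \th-dividing, and by being careful that all ranks in sight are finite (Fact preceding Fact \ref{thesis}) so the induction on $\alpha$ terminates. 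The stability bullet is essentially routine once the projection-of-indiscernibles observation is in place; the rank bullet is where the real care is needed.
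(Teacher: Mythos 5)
Your second bullet is in reasonable shape: the two-inequality induction, pulling back a \th-dividing $\psi(x_1,c)\vdash\phi'$ to $\psi(x_1,c)\wedge\phi(x_1,x_2)$ and pushing $\chi\vdash\phi$ forward to $\exists x_2\,\chi$ with a pigeonhole bound (consistency of $n(k-1)+1$ instances of the projection would lift to $k$ consistent instances upstairs), is essentially the ``straightforward induction'' the paper leaves to the reader, and proving the statement for strong dividing first is the right bookkeeping. The genuine gap is in the first bullet, in the direction you handle by projection. You claim that if $\chi(x_1,x_2,z)$ has the order property on $\phi(\CC^2)$, witnessed by $\langle (a_i,c_i)\rangle$ and $\langle b_j\rangle$, then the truth values of $\chi'(x_1,z):=\exists x_2\,(\phi(x_1,x_2)\wedge\chi(x_1,x_2,z))$ at the pairs $(a_i,b_j)$ are ``read off from the original configuration,'' the boundedness of the fiber taking care of the rest. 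They are not: for $i\ge j$ you only know that the particular witness $c_i$ fails $\chi(a_i,\cdot\,,b_j)$; another element of the ($\le n$-element) fiber over $a_i$ may well satisfy it, in which case $\chi'(a_i,b_j)$ holds and the order property is destroyed. The bound $n$ does not by itself exclude this (after extracting indiscernibility you only learn that the number of fiber witnesses is constant on each order type of the pair $(i,j)$, and it can be positive on both sides), so ``the quantifier is bounded'' is not yet an argument.

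The paper's proof shows what is actually needed at this point, and it is not an existential quantifier. It takes, via Theorem \ref{shelah}, an ordering formula $\pi(x_1,y_1;x_2,y_2)$ and an indiscernible sequence $\langle (a_i,c_i)\rangle$ inside $\phi(\CC^2)$, notes that $c_i\in\acl(a_i)$ because $\phi$ is at most $n$-to-one in $x_2$, so $\tp(c_1c_2/a_1a_2)$ is isolated by some $\psi(y_1,y_2,a_1,a_2)$, and then orders the projected sequence $\langle a_i\rangle$ by the formula $\forall y_1y_2\,\bigl(\psi(y_1,y_2,x_1,x_2)\to\pi(x_1,y_1;x_2,y_2)\bigr)$: since every realization of the isolating formula realizes the full type over $a_ia_j$, the universal quantifier relativized to $\psi$ is exactly the device that controls the stray fiber elements which your existential projection lets through. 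So either adopt this isolation-plus-universal-quantifier step (or some genuine substitute, e.g.\ counting fiber witnesses and modifying the formula accordingly); as written, this is the one step of the Fact that does not go through, while your treatment of the easy direction of the first bullet (lifting the sequence and extracting indiscernibility) and of the rank equality matches the paper.
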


\begin{proof}

If $\phi'$ is unstable, then any formula witnessing NOP for
$\phi'$ will also witness NOP for $\phi$. Thus, it's enough to
show that if $\phi$ is unstable so is $\phi'$. By \ref{shelah}
there is an indiscernible sequence $\bar {\bf b}$ satisfying
$\phi$ and $\pi(x_1,y_1;x_2,y_2)$ witnessing the strict order
property. Let $b_i:=(a_i,c_i)$ be the i-th element of $\bar {\bf
b}$; by assumption $c_i\in \acl(a_i)$. Therefore,
$\tp(c_1,c_2/a_1,a_2)$ is isolated, say by
$\psi(y_1,y_2,a_1,a_2)$. Hence, the formula $(\forall
x_1,x_2)(\psi(y_1,y_2,x_1,x_2)\to \pi(x_1,y_1;x_2,y_2))$ orders
the sequence $\bar {\bf a}:= \{a_i\}_i$. This proves (i).

The proof of (ii) is a straightforward induction on the \th-rank
of $\phi'(\mathcal{C})$.
\end{proof}

\begin{proposition}\label{non algebraic}
Let $\delta(x,y)$ and $\pi(y)$ be such that $\models \forall y
\left(\pi(y)\to \exists^\infty x \delta(x,y)\right)$ and $\Tho \left(\pi\left(y\right)\right)$ is finite. Then

\[\Tho \left(\delta\left(x,y\right)\wedge
\pi\left(y\right)\right)> \Tho \left(\pi\left(y\right)\right).\]
\end{proposition}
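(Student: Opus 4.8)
The plan is to show that $\Tho(\delta(x,y) \wedge \pi(y)) \geq \Tho(\pi(y)) + 1$ by exhibiting a formula that \th-divides over a suitable base and has \th-rank at least $\Tho(\pi(y))$. Write $\gamma := \Tho(\pi(y))$, which is finite by hypothesis, and pick a formula $\rho(y, c)$ with $\rho(y,c) \vdash \pi(y)$ realising this rank, i.e.\ $\Tho(\rho(y,c)) = \gamma$, together with a realisation $b$ of a type of full \th-rank $\gamma$ extending $\rho(y,c)$ over $c$; since $b \models \pi(y)$ we have $\models \exists^\infty x\, \delta(x,b)$. Now choose $a$ with $\models \delta(a,b)$ and $\tp(a/bc)$ non-algebraic — possible precisely because $\delta(x,b)$ is infinite. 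The idea is that the formula $\delta(x,b)$ (conjoined with $\pi(x)$ after a harmless relabelling, or kept as $\delta(x,y)\wedge \pi(y)$ viewed in the pair of variables) should \th-divide over $c$, while the ``witness'' or base data encoding $b$ carries \th-rank $\gamma$. Concretely, I would look at the pair $(a,b)$ and argue that $\Tho(\delta(x,y)\wedge\pi(y)) \geq \Tho(\tp(ab/c)) \geq \Tho(\tp(b/c)) + \Tho(\tp(a/bc))$, using the Lascar-style (super)additivity of \th-rank available here because all \th-ranks in sight are finite; since $\Tho(\tp(b/c)) = \gamma$ and $\Tho(\tp(a/bc)) \geq 1$ (non-algebraicity gives $\uth \geq 1$, and one can arrange $\tp(a/bc)$ to \th-fork over $c$ — or simply pass to such an extension), we get a type of \th-rank $\geq \gamma+1$ concentrated on $\delta(x,y)\wedge\pi(y)$, hence $\Tho(\delta(x,y)\wedge\pi(y)) \geq \gamma+1 > \gamma$.

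The cleaner route, avoiding any delicate additivity statement, is the direct one suggested by the definition of \th-rank of a formula recalled just before the proposition: to show $\Tho(\delta(x,y)\wedge\pi(y)) \geq \alpha+1$ it suffices to find $\psi(x,y;\bar u)$ with $\psi(x,y;\bar d) \vdash \delta(x,y)\wedge\pi(y)$, with $\psi(x,y;\bar d)$ \th-dividing over $\bar d$, and with $\Tho(\psi(x,y;\bar d)) \geq \alpha$ where we take $\alpha = \gamma$. First I would realise $\gamma = \Tho(\pi(y))$ by a formula $\rho(y)$ over some parameters, so that $\Tho(\delta(x,y)\wedge\rho(y)) \geq \Tho(\rho(y)) = \gamma$ already — the point being that the $y$-fibre of $\delta(x,y)\wedge\rho(y)$ over any $b\models\rho$ is infinite (by the hypothesis $\models\forall y(\pi(y)\to\exists^\infty x\,\delta(x,y))$), so projecting to $y$ does not drop \th-rank; this is exactly the phenomenon isolated in Fact~\ref{algebraic}(ii) for \emph{finite} fibres, and here, with infinite fibres, the inequality $\Tho(\delta(x,y)\wedge\pi(y)) \geq \Tho(\pi(y))$ is if anything easier and goes by a straightforward induction on $\Tho(\pi(y))$: a \th-dividing formula realising the rank of $\pi(y)$ in the variable $y$ pulls back to a \th-dividing formula of the same rank in $(x,y)$ after conjoining with $\delta$, using that $\delta$-fibres stay infinite. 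Then I would improve ``$\geq$'' to ``$>$'' by producing, on top of a generic $b\models\pi$ of \th-rank $\gamma$, one more layer of \th-dividing: choose $a$ with $\delta(a,b)$ and $\tp(a/bc)$ non-algebraic (infinite fibre again), so $\tp(a/bc)$ \th-forks over $c$ after passing to a forking extension if needed, giving a strictly longer \th-forking chain below $\delta(x,y)\wedge\pi(y)$ than the length-$\gamma$ chain realised purely on the $y$-coordinate.

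The main obstacle I anticipate is bookkeeping the base parameters correctly so that the ``extra'' \th-dividing step for $a$ over $bc$ genuinely stacks \emph{below} the length-$\gamma$ chain witnessing $\Tho(\pi(y)) = \gamma$, rather than merely living in parallel. In other words, I must be careful that the witness $b$ used to get the infinite $\delta$-fibre is itself generic enough — of full \th-rank $\gamma$ over the relevant base — so that the chain $\tp(ab/c) >_\tho \tp(b/c) >_\tho \cdots$ has length $\gamma+1$; this is where I would either invoke finiteness of all \th-ranks (granted by the Fact preceding Fact~\ref{thesis}) to legitimise an additivity/Lascar-inequality argument, or, if I want to stay elementary, argue by induction on $\gamma$ directly on formulas as in the proof of Fact~\ref{algebraic}(ii), feeding the non-algebraicity of the $\delta$-fibre in at the successor step. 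Either way the inequality is strict because the non-algebraic fibre contributes at least one honest \th-forking step that the pure-$y$ chain does not see.
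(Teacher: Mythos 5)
Your second, formula-level induction is essentially the paper's proof: the paper shows by induction that $\Tho(\pi(y))\geq\alpha$ implies $\Tho(\delta(x,y)\wedge\pi(y))\geq\alpha+1$, by taking $\theta(y,b)\vdash\pi(y)$ \th-forking over the base with $\Tho(\theta)\geq\alpha$, observing that the infinite-fibre hypothesis passes to $\theta$, and applying the inductive hypothesis to $\theta(y,b)\wedge\delta(x,y)$. Building the ``$+1$'' into the induction statement disposes of exactly the stacking/bookkeeping worry you raise, so the Lascar-additivity detour of your first paragraph is unnecessary (and would need extra justification, e.g.\ producing a complete type in $\pi$ of full \th-rank).
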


\begin{proof}
Clearly $\Tho \left(\delta\left(x,y\right)\wedge
\pi\left(y\right)\right)\geq \Tho \left(\pi\left(y\right)\right)$.
We will prove the sharp inequality by induction. The case
$\alpha=0$ being clear we assume that $\delta(x,y)$ and $\pi(y)$
are formulas over some set $A$ and suppose $\Tho
\left(\pi\left(y\right)\right)\geq \alpha+1$. By definition there
is a formula $\theta(y,b)$ \th-forking over $A$, such that
$\theta(y,b)\vdash \pi(y)$ and $\Tho(\theta(x,b))\ge \alpha$. As
$\models \forall y (\theta(y,b)\to \exists^\infty x \delta(x,y))$
the induction hypothesis yields
\[
\Tho(\theta(y,b)\wedge \delta(x,y))>\Tho(\theta(y,b)) = \alpha.
\]
But $\theta(y,b)\land \delta(x,y)$ \th-forks over $A$ (because
$\theta(y,b)\to \exists x \delta(x,y)$) so the claim follows.
\end{proof}

\begin{proposition}\label{strong dividing}
Assume that $\tp(a/Ab)$ \th-forks over $A$ and $\tp(a/Ab)$ has
finite $\ur$-rank. Then there are $b',c$ such that
$\ur(\tp(a/Ab))=\ur(\tp(a/Abb'c))$ and $\tp(a/Ab'c)$ strongly
divides over $Ac$.
\end{proposition}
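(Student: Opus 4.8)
The plan is to peel off one layer of \th-forking at a time, using the definition of \th-dividing to produce the auxiliary parameter $c$, and to control the $\ur$-rank by a careful choice of the intermediate type. First I would unpack the hypothesis: since $\tp(a/Ab)$ \th-forks over $A$, there is a formula $\delta(x,b)\in\tp(a/Ab)$ which \th-forks over $A$, hence implies a finite disjunction $\bigvee_{i} \delta_i(x,b_i)$ of formulas each \th-dividing over $A$. As $\tp(a/Ab)$ is a complete type, some one of these disjuncts, say $\delta_0(x,b_0)$, is in $\tp(a/Ab)$ (here $b_0$ is a tuple in $\mathrm{dcl}(Ab)$, so it is harmless to absorb it). Thus we may assume outright that there is $\delta(x,b)\in\tp(a/Ab)$ which \th-divides over $A$; by Definition \ref{thorking} there is a tuple $c$ such that $\delta(x,b)$ strongly divides over $Ac$, i.e. $\tp(b/Ac)$ is non-algebraic and $\{\delta(x,b')\}_{b'\models\tp(b/Ac)}$ is $k$-inconsistent for some $k$. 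This $c$ is the candidate for the proposition's $c$, and $b$ will play the role of $b'$ — but as written the statement asks for \emph{both} $b$ and an additional $b'$, so I would keep $b$ as the original parameter and take the ``new'' parameter (call it $b'$) to be a realization of $\tp(b/Ac)$ chosen generically, as explained below.

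The heart of the argument is the rank bookkeeping. We want $\ur(\tp(a/Ab))=\ur(\tp(a/Abb'c))$, so adjoining $b',c$ to the base must not drop the $\ur$-rank of $\tp(a)$. Since $\tp(a/Ab)$ has finite $\ur$-rank, I would choose $c$ — and then $b'\models\tp(b/Ac)$ — to be \emph{independent from $a$ over $Ab$} in the forking sense; concretely, realize $\tp(c/Ab)$ (extended to include a suitable witness tuple) by a non-forking extension over $Aba$, and similarly pick $b'$ realizing a non-forking extension of $\tp(b/Ac)$ over $Aca$. Because non-forking extensions of the base do not change $\ur$-rank of a fixed type (this is the standard finite-$\ur$-rank calculus, available here since $\tp(a/Ab)$ has finite $\ur$-rank), we get $\ur(\tp(a/Ab))=\ur(\tp(a/Abc))=\ur(\tp(a/Abb'c))$. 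The delicate point is that this independent choice of $c$ and $b'$ must still be compatible with $c$ witnessing strong dividing of $\delta(x,b')$ over $Ac$: strong dividing is a property of $\tp(b'/Ac)$ alone ($k$-inconsistency of the conjugates of $\delta(x,\cdot)$ plus non-algebraicity), so it is preserved under any choice of $b'$ realizing $\tp(b/Ac)$, and preserved when we realize $\tp(c/Ab)$ inside a sufficiently saturated model — one just has to check that $\delta(x,b')\in\tp(a/Ab'c)$, which holds because $\mathrm{tp}(ab)=\mathrm{tp}(ab')$ over $A$ forces $\delta(a,b')$ once $b'\equiv_A b$ and the conjugation is chosen to fix $a$; this is where a little care with the order of the independent choices is needed.

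I expect the main obstacle to be exactly this tension between two demands on $b'$ and $c$: strong dividing wants $\tp(b'/Ac)$ non-algebraic and wants the $\delta$-conjugates inconsistent, while the rank equality wants $c,b'$ as independent from $a$ as possible over $Ab$. The resolution is to first fix the \emph{formula} $\delta$ and the abstract data ($k$-inconsistency over some base) from the hypothesis, then choose the realization of the relevant types by non-forking extensions over $Aba$ in that order; the $k$-inconsistency is a formula-level fact that survives, and non-algebraicity of $\tp(b'/Ac)$ survives because a non-forking (indeed any) extension of a non-algebraic type stays non-algebraic. Once these compatibilities are verified, the statement follows by assembling: $\tp(a/Abb'c)$ has the same $\ur$-rank as $\tp(a/Ab)$, and $\delta(x,b')\in\tp(a/Ab'c)$ strongly divides over $Ac$, so $\tp(a/Ab'c)$ strongly divides over $Ac$, as required.
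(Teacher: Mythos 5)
There are two genuine gaps, and both concern exactly the points the proposition is designed to handle. First, your opening reduction is unjustified: by the paper's definition, ``$\tp(a/Ab)$ \th-forks over $A$'' means some formula of the type implies a finite disjunction $\bigvee_i\phi_i(x,b_i)$ of formulas \th-dividing over $A$, where the parameters $b_i$ are arbitrary tuples of the monster model, \emph{not} elements of $\dcl(Ab)$. So completeness of $\tp(a/Ab)$ does not place any disjunct inside the type, and you cannot ``absorb'' $b_0$ into $b$. This is precisely why the statement introduces an extra parameter $b'$ and why the finite-rank hypothesis is needed: the paper's proof first uses finiteness of $\ur(\tp(a/Ab))$ to find $m$ with $\ur(\tp(a/Ab)\cup\{\phi_m(x,b_m)\})=\ur(\tp(a/Ab))$ (a non-forking extension of the type must contain some disjunct), and then applies an automorphism over $Ab$ to arrange $a\models\phi_m(x,b_m)$ together with $a\ind_{Ab}b_m$; the proposition's $b'$ is this $b_m$, which is genuinely new data.

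Second, your recipe for $b'$ cannot work. You propose to take $b'$ a realization of $\tp(b/Ac)$ chosen independent (``generic'') from $a$, while still having $\delta(x,b')\in\tp(a/Ab'c)$. But strong dividing gives $k$-inconsistency of $\{\delta(x,b'')\}_{b''\models\tp(b/Ac)}$, so at most $k-1$ conjugates $b''$ of $b$ over $Ac$ can satisfy $\delta(a,b'')$; these form a finite $Aca$-definable set, hence lie in $\acl(Aca)$, whereas $\tp(b/Ac)$ is non-algebraic --- so no realization of $\tp(b/Ac)$ independent from $a$ over $Ac$ satisfies $\delta(a,\cdot)$. The conjugation ``fixing $a$'' that you invoke need not fix $c$, so it does not reconcile the two demands. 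The correct bookkeeping goes the other way around: keep the parameter of the strongly dividing formula (the $b_m$ above, which $a$ already satisfies) fixed, and move the strong-dividing \emph{witness} instead, choosing $c\models\tp(c'/Abb_m)$ with $a\ind_{Abb_m}c$. Strong dividing of $\phi_m(x,b_m)$ over $Ac$ depends only on $\tp(b_m c/A)$, so it is preserved, while the independence choices and finiteness of the rank give $\ur(\tp(a/Ab))=\ur(\tp(a/Abb_m))=\ur(\tp(a/Abb_mc))$, which is the assertion of the proposition.
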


\begin{proof}
By definition, there are finitely many formulas $\phi_i(x,b_i)$
such that

\[ \tp(a/Ab)\vdash \bigvee_i \phi_i(x,b_i) \] and $\phi(x,b_i)$
\th-divides over $A$. Finiteness of the $\ur$-rank implies that
$\tp(a/Ab)$ does not fork over $Ab$ and therefore $\ur(\tp(a/Ab)\cup \{\phi_m(x,b_m)\})=\ur(\tp(a/Ab))$
for some $m$. For such $m$, we get that $\tp(a/Ab)\cup \{\phi_m(x,b_m)\}$ is a non
forking extension of $\tp(a/Ab)$; using automorphisms we may
assume that $a\models \phi_m(x,b_m)$ and $a\ind_{Ab} b_m$.

By definition of \th-dividing there is some $c'$ such that
$\phi(x,b_m)$ strongly divides over $Ac'$. Let $c\models
\tp(c'/Abb_m)$ be such that $a\ind_{Abb_m} c$. Since $c\models
\tp(c'/Ab_m)$ strong dividing is preserved and

\[\ur\left(\tp\left(a/Ab\right)\right)=
\ur\left(\tp\left(a/Abb_m\right)\right)=\ur\left(\tp\left(a/Abb_mc\right)\right)\]
so letting $b'=b_m$ proves the proposition.
\end{proof}

\begin{proposition}\label{stable formula}
If a set $\phi(x,b)$ is stable, then there is some $\theta(y)\in
\tp(b/\emptyset)$ such that $\phi(x,y)\wedge \theta(y)$ has NOP.
\end{proposition}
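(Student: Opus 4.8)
The plan is to argue by contradiction: suppose that for \emph{every} $\theta(y)\in\tp(b/\emptyset)$ the formula $\mu_\theta(x,y):=\phi(x,y)\wedge\theta(y)$ has the order property, and derive that the set $\phi(x,b)$ is unstable, contradicting the hypothesis. (Here I take $\phi(x,y)$ to be an $\mathcal L(\emptyset)$-formula with displayed parameter $b$; if $\phi$ carried extra parameters one would work over those throughout instead of over $\emptyset$.)

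The first step is to analyse a single order-property witness for $\mu_\theta$: indiscernible sequences $\langle a_i\rangle_{i<\omega}$ and $\langle f_i\rangle_{i<\omega}$ with $\models\mu_\theta(a_i,f_j)$ iff $i<j$. Reading the row $i=0$ gives $\models\theta(f_j)$ for all $j\ge 1$, hence $\models\theta(f_i)$ for all $i$ by indiscernibility of $\langle f_i\rangle$ over $\emptyset$; consequently $\models\phi(a_i,f_j)$ iff $i<j$ as well. Thus for each $\theta\in\tp(b/\emptyset)$ there are $\langle a_i\rangle_{i<\omega}$ and $\langle f_i\rangle_{i<\omega}$ with $\models\phi(a_i,f_j)\iff i<j$ and $\models\theta(f_i)$ for every $i$. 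Reading off finitely many rows at a time, this produces, for each $n$, elements $a_0,\dots,a_{n-1}$, $f_0,\dots,f_{n-1}$ and an \emph{extra} parameter $f^{\ast}$ (take a later $f_n$) with $\models\phi(a_i,f_j)\iff i<j$ for $i,j<n$, with $\models\phi(a_i,f^{\ast})$ for all $i<n$, and with $\models\theta(f^{\ast})$.

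The second step is compactness. Consider the partial type $\Gamma$ in variables $\langle x_i\rangle_{i<\omega}$, $\langle y_j\rangle_{j<\omega}$ and one further variable $y^{\ast}$, consisting of $\phi(x_i,y_j)$ for $i<j$, of $\neg\phi(x_i,y_j)$ for $i\ge j$, of $\phi(x_i,y^{\ast})$ for all $i$, and of $\rho(y^{\ast})$ for all $\rho\in\tp(b/\emptyset)$. A finite subset mentions finitely many indices and finitely many formulas $\rho_1,\dots,\rho_m\in\tp(b/\emptyset)$, whose conjunction is a single $\theta\in\tp(b/\emptyset)$, so it is satisfied by the configuration produced in the first step; hence $\Gamma$ is finitely satisfiable and is realised in the monster by $\langle a_i\rangle_{i<\omega}$, $\langle c_j\rangle_{j<\omega}$ and an element $c^{\ast}$. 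Since $c^{\ast}\models\tp(b/\emptyset)$ there is an automorphism $\tau$ with $\tau(c^{\ast})=b$; applying $\tau$ we obtain $\langle\tau a_i\rangle_{i<\omega}$, each lying in $\phi(\mathcal C,b)$ because $\models\phi(a_i,c^{\ast})$, together with $\langle\tau c_j\rangle_{j<\omega}$, satisfying $\models\phi(\tau a_i,\tau c_j)\iff i<j$. By the equivalence in Theorem \ref{stable} — third clause, with the witnessing formula taken to be $\phi$ itself — this exhibits $\phi(x,b)$ as an unstable set, the desired contradiction.

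The step I expect to be the main obstacle is precisely the one that forces the auxiliary variable $y^{\ast}$: the hypothesis gives instability of $\phi(x,y)$ only \emph{after} intersecting with a complete type of $b$, whereas $\phi(x,y)$ itself may perfectly well be an unstable formula, so there is no hope of transporting an order-property witness for $\phi(x,y)$ directly onto the single fibre $\phi(x,b)$. What makes the transfer work is manufacturing, by compactness, one parameter $c^{\ast}$ that simultaneously realises all of $\tp(b/\emptyset)$ and is $\phi$-above an entire order-witnessing sequence; the automorphism taking $c^{\ast}$ to $b$ then drops the whole configuration onto the fibre over $b$. A secondary point to be careful about is that the order-property witnesses in the first step are genuinely indiscernible over the parameters of $\phi$ (here $\emptyset$), since that is what legitimises the passage from "$\models\theta(f_j)$ for $j\ge 1$" to "$\models\theta(f_i)$ for all $i$".
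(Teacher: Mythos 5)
Your argument is correct, and it reaches the conclusion by a route that is similar in spirit to the paper's but genuinely different in execution; both ultimately rest on compactness over the complete type $\tp(b/\emptyset)$ plus an automorphism carrying a realisation of that type onto $b$. The paper argues directly: its Claim \ref{claim} asserts there are no indiscernible sequences $\langle a_i\rangle,\langle b_j\rangle$ with every $b_j\models\tp(b/\emptyset)$ and $\phi(a_i,b_j)$ following the order pattern --- an automorphism moves one of the $b_j$ onto $b$, placing (after stretching the index set) infinitely many $a_i$ inside $\phi(x,b)$, contradicting Fact \ref{withkobi} --- and then $\theta$ is extracted by compactness from the inconsistency of the corresponding type. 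You argue contrapositively and, instead of requiring the witnessing parameters themselves to realise $\tp(b/\emptyset)$, adjoin one auxiliary variable $y^{\ast}$ forced to realise the whole type and to be $\phi$-above the entire $x$-sequence; the automorphism then moves only $c^{\ast}$ onto $b$, and instability of $\phi(x,b)$ follows from the third clause of Theorem \ref{stable}, which needs no indiscernibility of the transported sequences. Your device buys robustness at exactly the delicate point: nothing need be known about the images $\tau c_j$, and all the $\tau a_i$ land in the fibre over $b$ at once, whereas in the paper's Claim, sending $b_0$ to $b$ under the pattern $i\le j$ literally yields only finitely many $a_i$ in $\phi(x,b)$ and one must first extend the index set (or reverse the pattern). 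The paper's version, in exchange, stays entirely within indiscernible ladders and so invokes only Fact \ref{withkobi} rather than the full equivalence packaged in Theorem \ref{stable}; note that both routes use dependence of the ambient theory through these respective black boxes.
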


\begin{proof}

\begin{claim}\label{claim}
Let $r(y)=\tp(b/\emptyset)$. Then there are no indiscernible
sequences $\langle a_i\rangle _{i\in \omega}$ and $\langle
b_j\rangle _{j\in \omega}$ such that $b_j\models r(y)$ and
$\models \phi(a_i, b_j)$ if and only if $i\leq j$.
\end{claim}

\begin{proof}
We may assume without loss of generality (by using automorphisms)
that $b_0=b$. In this case, $a_i\models \phi(x,b)$ which implies
that $\phi(x,y)\wedge \phi(x,b)$ witnesses the order property,
contradicting Fact \ref{withkobi}.
\end{proof}

Let $p(x_1,\dots, x_n, \dots)$, $q(y_1,\dots, y_n, \dots)$ be the
(partial) types expressing ``$\langle x_i\rangle _{i\in \omega}$
is an indiscernible sequence'' and ``$\langle y_j\rangle _{j\in
\omega}$ is an indiscernible sequence'' respectively. By Claim
\ref{claim} the type

\[p\left(\bar{x}\right)\cup q\left(\bar{y}\right)\cup
\left\{ \bigcup_j r\left(y_j\right)\right\} \cup
\left\{\bigcup_{i<j} \phi\left(x_i, y_j\right)\right\} \cup
\left\{\bigcup_{i\geq j} \neg\phi\left(x_i, y_j\right)\right\}\]
is inconsistent. By compactness, there is a formula $\theta(y)\in
r(y)$ such that

\[p\left(\bar{x}\right)\cup q\left(\bar{y}\right)\cup
\left\{ \bigcup_j \theta\left(y_j\right)\right\} \cup
\left\{\bigcup_{i<j} \phi\left(x_i, y_j\right)\right\} \cup
\left\{\bigcup_{i\geq j} \neg\phi\left(x_i, y_j\right)\right\}\]
is inconsistent and by definition this implies that
$\phi(x,y)\wedge \theta(y)$ has NOP.
\end{proof}

\begin{corollary}\label{corollary}
Let $a,b$ and $c$ be such that there exists $\phi(x,b)\in
\tp(a/bc)$ defining a stable set but $\ur(a/c)=\infty$. Then
$a\nthind_c b$.
\end{corollary}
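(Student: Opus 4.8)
\textbf{Proof plan for Corollary \ref{corollary}.}

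The statement to prove is the contrapositive-friendly assertion: if $\phi(x,b)\in\tp(a/bc)$ defines a stable set and yet $\ur(a/c)=\infty$, then $a\nthind_c b$. The plan is to argue by contraposition: assume $a\thind_c b$, i.e. $\tp(a/bc)$ does not \th-fork over $c$, and deduce that $\ur(a/c)<\infty$. First I would invoke Corollary \ref{superstable}: since $\phi(x,b)$ is a formula in $\tp(a/bc)$ defining a stable set, we have $\uth(\tp(a/bc))=\ur(\tp(a/bc))$, and since the ambient theory is interpretable in an o-minimal one, $\uth$ (hence $\ur$) of $\tp(a/bc)$ is finite by the earlier Fact on finiteness of \th-rank. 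So it remains to pass from the finiteness of $\ur(a/bc)$ to the finiteness of $\ur(a/c)$, and this is exactly where the non-\th-forking hypothesis is used.

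The key step is therefore: if $a\thind_c b$, then $\ur(\tp(a/c))=\ur(\tp(a/bc))$. The inequality $\ur(\tp(a/c))\ge\ur(\tp(a/bc))$ is immediate since $\tp(a/bc)$ is an extension of $\tp(a/c)$. For the reverse inequality I would use the characterisation of $\ur$-rank as a foundation rank together with the fact (standard for \th-forking, available in \cite{onshuus},\cite{onshuusthesis}) that $\ur$-rank does not drop along non-\th-forking extensions, i.e. $\ur(\tp(a/c))=\ur(\tp(a/bc))$ whenever $\tp(a/bc)$ does not \th-fork over $c$. One subtlety: $\ur$-rank (ordinary $\ur$-rank, the foundation rank for forking) is a priori controlled by \emph{forking}, not \th-forking, so I cannot apply the non-\th-forking hypothesis to it directly. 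This is resolved by working with $\uth$ instead and appealing again to Corollary \ref{superstable}: since $\phi(x,b)$ (equivalently any extension of $\tp(a/c)$ through $\phi$, as $\phi\in\tp(a/bc)$ but in fact, possibly after noting $\phi$ can be taken over $c$ or by using that the stable formula persists, $\phi$ witnesses stability along the relevant types) defines a stable set, $\uth$ and $\ur$ coincide on all the types in question, so the non-\th-forking hypothesis now bites: $\uth(\tp(a/c))=\uth(\tp(a/bc))$ because $\tp(a/bc)$ does not \th-fork over $c$, hence $\ur(\tp(a/c))=\ur(\tp(a/bc))<\infty$, contradicting $\ur(a/c)=\infty$.

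The main obstacle I anticipate is the bookkeeping around \emph{which} types contain a formula defining a stable set, so that Corollary \ref{superstable} applies uniformly: we are given $\phi(x,b)\in\tp(a/bc)$, and we need the coincidence $\uth=\ur$ not only for $\tp(a/bc)$ but also for $\tp(a/c)$ and for the intermediate \th-forking extensions appearing in the foundation-rank induction. Here one uses that $\tp(a/c)\subseteq\tp(a/bc)$ and that containing a stable-set-defining formula is preserved under taking extensions of a type (a stable set stays stable), so every extension of $\tp(a/bc)$ — and in particular the ones realised in the computation of $\uth(\tp(a/c))$ that extend to contain $\phi(x,b)$ — satisfies the hypothesis of Corollary \ref{superstable}; the only point needing a word of care is that in computing $\uth(\tp(a/c))$ one may restrict attention to extensions compatible with $\phi(x,b)$ by a non-forking (equivalently, by finite $\ur$, non-dropping) argument as in the proof of Proposition \ref{strong dividing}. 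Modulo this routine verification, the corollary follows.
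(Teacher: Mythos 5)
Your contrapositive argument breaks down at the step where you transfer rank information from $\tp(a/bc)$ back to $\tp(a/c)$. Corollary \ref{superstable} applies only to types containing a formula that defines a stable set, and this property passes to \emph{extensions} of a type, not to restrictions: the stable formula you are given is $\phi(x,b)$, and its parameter $b$ is exactly what is discarded when you pass to $\tp(a/c)$, so there is no reason for $\tp(a/c)$ to contain any stable-set-defining formula. In fact, under the hypotheses it cannot: $\uth(\tp(a/c))$ is finite (by the Fact bounding $\uth$ by the global \th-rank), while $\ur(\tp(a/c))=\infty$, so the coincidence $\uth=\ur$ you invoke for $\tp(a/c)$ is precisely what fails here (Remark \ref{wstable} exhibits this phenomenon). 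Consequently, from $a\thind_c b$ you only obtain $\uth(\tp(a/c))=\uth(\tp(a/bc))$, which is vacuous -- both sides are finite in any case -- and gives no control of $\ur(\tp(a/c))$; and your fallback claim that $\ur$ does not drop along non-\th-forking extensions is unavailable, since $\ur$ is governed by forking, not \th-forking, which is the very point at issue. The part of your plan that does work (Corollary \ref{superstable} giving $\ur(\tp(a/bc))<\infty$, indeed finiteness of $\ur$ for every completion of $\tp(a/c)\cup\{\phi(x,b)\}$) is also the starting point of the paper's proof.

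The missing idea is a transfer in the opposite direction, from forking to \th-forking along a stable formula. The paper argues as follows: since every completion of $\tp(a/c)\cup\{\phi(x,b)\}$ has finite $\ur$-rank while $\ur(\tp(a/c))=\infty$, every such completion forks over $c$; Proposition \ref{stable formula} supplies $\theta(y)\in\tp(b)$ such that $\psi(x,y):=\phi(x,y)\wedge\theta(y)$ has NOP, so the forking of $\tp(a/bc)$ over $c$ is witnessed by instances of the NOP formula $\psi$; Fact \ref{thesis} then upgrades this forking to \th-forking of $\tp(a/bc)$ over $c$, i.e. $a\nthind_c b$. Without Fact \ref{thesis} (or some substitute bridging forking and \th-forking for stable formulas), the conclusion cannot be reached from rank comparisons alone, so your proposal as it stands has a genuine gap.
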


\begin{proof}
Let $a,b,c$ and $\phi(x,b)$ be as in the statement. By Proposition
\ref{stable formula} there is some $\theta(y)\in \tp(b)$ such that
$\psi(x,y):=\phi(x,y)\wedge \theta(y)$ has NOP.

By definition every type extending $\phi(x,b)$ has finite $\ur$-rank and $\tp(a/c)$ has infinite $\ur$-rank so every completion of
$\tp(a/c)\cup \{\phi(x,b)\}$ forks over $c$. This implies that
$\psi(x,b)$ forks over $c$. But $\psi(x,y)$ satisfies NOP so Fact
\ref{thesis} implies there is a $\psi$-formula witnessing that
$\tp(a/bc)$ \th-forks over $c$.
\end{proof}

\subsection{Unstable sets of small dimension}

In this section we conclude the proof of Theorem \ref{interpreting}. The main result which allows us to do this is:

\begin{theorem}\label{unstable}
let $T$ be a dependent theory with definable finiteness (i.e. $T$
eliminates the quantifier $\exists^{\infty}$) such that the
(global) \th-ranks of definable sets are bounded by a finite
number. Let $\mathbf{\Phi}$ be the class of unstable definable sets
in $T$, and
\[
 n:=\min \left\{ \text{\th}\left(\phi(\mathcal{C},c)\right) \left| \right. \ \phi(x,c)\in
\mathbf{\Phi} \right\}.
\]
Let $\phi(x,c)\in \mathbf{\Phi}$ be such that \th$(\phi(x,c))=n$
and let $\theta(x,b)\vdash \phi(x,c)$ contain some $a$ with $\ur(\tp(a/c))=\infty$ and $a\not\in
\acl(b)$. Then $\theta(x,b)$ is unstable.
\end{theorem}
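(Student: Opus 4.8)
The plan is to argue by contradiction: suppose $\theta(x,b)$ is stable. The guiding idea is that $\theta(x,b)$, being a stable subset of the minimal-$\text{\th}$-rank unstable set $\phi(x,c)$, is ``too small'' to contain the element $a$ with $\ur(\tp(a/c))=\infty$, because such an element should be pinned down by its interaction with the unstable structure on $\phi(x,c)$ via a $\text{\th}$-forking relation — but $\text{\th}$-forking into a stable set drops $\ur$-rank to something finite.

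\textbf{Step 1: Use minimality of $n$ to get $\uth$-finiteness inside $\phi(x,c)$.} Since $\phi(x,c)$ has $\text{\th}$-rank exactly $n$, any $\text{\th}$-forking extension of $\tp(a/c)$ (or of any type concentrated on $\phi(x,c)$) lands in a definable set of strictly smaller $\text{\th}$-rank, which by the choice of $n$ must be \emph{stable}. By Corollary \ref{superstable}, a type containing a stable formula has $\uth = \ur$; combined with the global finiteness of $\text{\th}$-ranks, every proper $\text{\th}$-forking extension of $\tp(a/c)$ has finite $\ur$-rank. In particular $\tp(a/c)$ itself cannot $\text{\th}$-fork over $c$ down to anything of infinite $\ur$-rank in fewer than... — more precisely, if we ever produce a single instance of $\text{\th}$-forking over $c$ for (an extension of) $\tp(a/c)$ landing in $\theta(x,b)$ we will have our contradiction, since $\theta(x,b)$ would then be a stable set, the extension would have finite $\uth$-rank hence finite $\ur$-rank, contradicting $\ur(\tp(a/c))=\infty$ together with the fact that forking can only increase rank downward.

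\textbf{Step 2: Produce the $\text{\th}$-forking via Corollary \ref{corollary}.} Assume $\theta(x,b)$ is stable. Then $\theta(x,b)\in\tp(a/bc)$ defines a stable set and $\ur(\tp(a/c))=\infty$, so Corollary \ref{corollary} applies verbatim and gives $a\nthind_c b$, i.e. $\tp(a/bc)$ $\text{\th}$-forks over $c$. Now combine this with Step 1: $\tp(a/bc)$ is a type containing the stable formula $\theta(x,b)$, so by Corollary \ref{superstable} it has $\uth(\tp(a/bc)) = \ur(\tp(a/bc))$, which is finite (being bounded by the finite global $\text{\th}$-rank). But $\tp(a/bc)$ $\text{\th}$-forks over $c$ while $\tp(a/c)$ has $\ur = \uth = \infty$ — here I need the observation that $a\in\phi(\mathcal C,c)\setminus$ something forces $\tp(a/c)$ to \emph{contain} a stable formula once we know $\theta(x,b)\vdash\phi(x,c)$ is stable, because then $\phi(x,c)$ restricted appropriately... wait — rather, the point is simply: $\ur(\tp(a/c))\ge\ur(\tp(a/bc))+$ (number of $\text{\th}$-forking steps)$\ge$ something, yet the left side is $\infty$ and by Corollary \ref{superstable} applied to $\tp(a/bc)$ it is finite once $\theta(x,b)$ is stable. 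The honest route: Corollary \ref{superstable} needs $\tp(a/c)$ to contain a stable formula to conclude $\uth(\tp(a/c))=\ur(\tp(a/c))$. Since $a\models\phi(x,c)$ and $\theta(x,b)\vdash\phi(x,c)$, and $\theta(x,b)$ is (assumed) stable with $\theta(x,b)\in\tp(a/bc)$ — but $\theta$ has parameter $b\notin\dcl(c)$. So instead: apply Fact \ref{withkobi}-style reasoning — if $\theta(x,b)$ is stable, every extension of $\tp(a/c)\cup\{\theta(x,b)\}$ has finite $\ur$-rank, so (as in the proof of Corollary \ref{corollary}) every completion of $\tp(a/c)\cup\{\theta(x,b)\}$ forks over $c$; but this is exactly what the stated theorem's hypotheses and Corollary \ref{corollary} are built to exploit, yielding $\tp(a/bc)$ $\text{\th}$-forks over $c$ with finite $\uth$-rank, whence finite $\ur$-rank, contradicting that $\tp(a/c)$ already had infinite $\ur$-rank and forking extensions do not increase it.

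\textbf{Main obstacle.} The delicate point is reconciling ``$\tp(a/c)$ has infinite $\ur$-rank'' with ``$\tp(a/bc)$ has finite $\ur$-rank'' — these are compatible in general (forking extensions can have strictly smaller rank, even jumping from $\infty$ to finite only if the original was not well-founded), so I cannot simply say ``contradiction'' from monotonicity. The real contradiction must come from the \emph{minimality of $n$}: $\theta(x,b)$ stable would be a stable set of $\text{\th}$-rank $\le n$ containing $a$, and one must show this forces $\phi(x,c)$ itself — or a definable set of $\text{\th}$-rank $< n$ derived from it — to be unstable, contradicting minimality; this is where Proposition \ref{non algebraic} and Fact \ref{algebraic} enter, comparing $\Tho(\theta(x,b)\wedge(\text{unstable witness}))$ with $\Tho(\theta(x,b))$ and pushing the instability down below level $n$. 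Getting this rank bookkeeping exactly right — ensuring the new unstable set genuinely has $\text{\th}$-rank strictly below $n$ rather than merely $\le n$ — is the crux of the argument.
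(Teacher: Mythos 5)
There is a genuine gap. You set up the contradiction correctly (assume $\theta(x,b)$ stable, apply Corollary \ref{corollary} to get $a\nthind_c b$, note that $\ur(\tp(a/bc))$ is finite by Corollary \ref{superstable}), but — as you yourself concede in your ``main obstacle'' paragraph — the jump from $\ur(\tp(a/c))=\infty$ to $\ur(\tp(a/bc))<\infty$ is not by itself contradictory, and the part of the argument you defer (``the rank bookkeeping'') is precisely the content of the proof, not a detail. What is missing is the mechanism that converts the instance of \th-forking into a definable object of \th-rank strictly below $n$: in the paper one first fixes, among all stable $\theta(x,b)$ and $a$ satisfying the hypotheses, a pair maximising $m:=\ur(\tp(a/cb))$, and then applies Proposition \ref{strong dividing} to replace \th-forking by \emph{strong dividing}. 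This produces $b_0,d$ with $a\ind_b b_0d$, a formula $\theta_0(x,b_0)\in\tp(a/b_0cd)$ and a formula $\pi(y,d)$ such that $\{\theta_0(x,b')\}_{b'\models\pi(y,d)}$ is $k$-inconsistent; in particular $b_0\in\acl(ad)$ while $b_0\notin\acl(cd)$, so $\pi(y,d)$ parametrises an infinite family of almost disjoint infinite subsets of $\phi(x,c)$. Your proposal never produces this family, and without it neither Fact \ref{algebraic} nor Proposition \ref{non algebraic} has anything to act on.

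The two remaining steps, which you gesture at but do not supply, both hinge on this family. First, one shows $\pi(\CC,d)$ is unstable: if $\tp(a/cd)$ contained a stable formula, then (coding $b_0\in\acl(ad)$ into it and using Fact \ref{algebraic}) Lascar's inequalities give $\ur(ab_0/cd)=\ur(b_0/cd)+\ur(a/b_0cd)>m$, contradicting the maximality of $m$ — this is exactly where the maximal choice of $m$, absent from your write-up, is used; Fact \ref{algebraic} then transfers this to $\tp(ab_0/cd)$, so $\pi(\CC,d)$ is unstable. Second, setting $\psi(x,y;c,d):=\phi(x,c)\wedge\theta_0(x,y)\wedge\pi(y,d)$, the $k$-inconsistency makes $y$ algebraic over $x$ on $\psi$, so $\Tho(\psi)=\Tho(\phi(x,c))=n$ by Fact \ref{algebraic}, while each fibre $\theta_0(x,b')\wedge\phi(x,c)$ is infinite, so Proposition \ref{non algebraic} gives $\Tho(\pi(y,d))<\Tho(\psi)=n$; an unstable set of \th-rank below $n$ contradicts the minimality of $n$. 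So your proposal identifies the correct ingredients but stops exactly at the construction that makes the proof work.
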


\begin{proof}
Let $\phi(x,c)$ be an unstable set of minimal $\Tho$-rank. Assume towards a contradiction that there is a stable set
$\theta(x,b)$ and $a\models \theta(x,b)$ satisfying all the assumptions of the theorem. Fix such $\theta(x,b)$ and $a$ for which
$m:=\ur(\tp(a/cb))$ is maximal. Note that $m$ is well defined as $\ur(\tp(a/cb))$, when finite, is
bounded by $\uth(a/\0)$.
To simplify the notation we will assume  that $c$ is a subsequence
of $b$. 

\medskip

To reach a contradiction we will use \th-forking to find a
definable subset $Y$ of $\phi(x,c)$ consisting of an infinite definable
family of almost disjoint copies of $\theta(x,b)$. We can then show that either $I$, the set parametrising the family, is stable which implies that so
is $Y$ - contradicting the maximality of $\ur(\tp(a/b))$; or $I$ is
unstable with $\tho(I)<n$, in contradiction to the
minimality of $n$.

\medskip

By Corollary \ref{corollary} we know that $a\nthind_c b$ and by
hypothesis $\ur(\tp(a/b))=m$. By Proposition
\ref{strong dividing} there are $b_0,d$ such that $a\ind_{b} b_0d$
and $\tp(a/b_0cd)$ contains some $\theta_0(x,b_0)$ 
strongly dividing over $cd$. By definition of strong dividing
$\tp(b_0/cd)$ is non algebraic and there is a formula $\pi(y,d)$
such that

\[\left\{ \theta_0\left(x,b'\right) \right\}_{b'\models
\pi(y,d)}\] is $k$-inconsistent for some $k$. In particular, there
are at most $k-1$ elements satisfying $\pi(y,d)\land\theta_0(a,y)$
so $b_0$ is algebraic over $ad$, witnessed by the formula
$\theta_0(a,y)\wedge \pi(y,d)$.

By assumption $\tp(a/bc)$ is non algebraic and $a\ind_{bc} b_0d$
so $\tp(a/b_0d)$ is non algebraic and by definable finiteness we
know there is a formula $\mu(y)\in \tp(b_0/cd)$ such that
$\phi(x,b')$ is infinite for any $b'\models \mu(y)$; we may assume
that $\pi(y,d)\rightarrow \mu(y)$.

\begin{claim}
$\tp(a/cd)$ contains no stable definable sets.
\end{claim}

\begin{proof}
Assume that $\delta(x)\in \tp(a/cd)$ defines a stable set. Since
$b_0\in \acl(ad)$ there is a formula $\delta(x,y)\in \tp(ab_0/cd)$
such that $\forall x \exists^{<n}y \delta(x,y)$. We may also
assume that $\exists y \delta(x,y) \equiv \delta(x)$ so, by Fact
\ref{algebraic}, $\delta(x,y)$ defines a stable set as well.

Since $\ur(a/cd)\ge \ur(a/bd)$ (recall that $c\subseteq b$) the maximality of $m$ implies that in fact $\ur(a/cd)= \ur(a/bd)$. But $b_0\in \acl(ad)$ so $\ur(ab_0/cd)=\ur(a/cd)=\ur(a/bd)$. But $\ur(a/bdb_0)=\ur(a/b)=m$, so $\ur(a/cd)=m$. On the other hand, as $\delta(x,y)$ defines a stable set, we can use Lascar's inequalities to get: 
\[
 \ur(ab_0/cd)=\ur(b_0/cd)+\ur(a/bdb_0)
\]
But $b_0\notin \acl(cd)$ and the last equality implies that $\ur(a/cd)>m$, a contradiction. 

\end{proof}

By Fact \ref{algebraic} we know that $\tp(ab_0/cd)$ does not
contain any non algebraic formula defining a stable set. In
particular, $\pi(\mathcal{C},d)$ is unstable.

\begin{claim}
$\Tho (\pi(y,d))<\Tho (\phi(x,c))=n$.
\end{claim}

\begin{proof}
Let $\psi(x,y;c,d):=\phi(x,c)\wedge \theta_0(x,y)\wedge \pi(y,d)$.
For all $a'\models \phi(x,c)$ there are finitely many $b'$ such
that $\models \psi(a',b';c,d)$; by Fact \ref{algebraic}, $\Tho
(\psi(x,y;c,d))=\Tho(\phi(x,c))=n$.

However, $\theta_0(x,b')\wedge \phi(x,c)$ is non algebraic for any
$b'\models \pi(y,d)$ so by Fact \ref{non algebraic} $\Tho
(\psi(x,y;c,d))>\Tho (\pi(y,d))$.
\end{proof}

So $\pi(\mathcal{C},d)$ is an unstable definable set of \th-rank
smaller than $n$; this contradicts the minimality of $n$ and the
theorem follows.
\end{proof}

\noindent We can now prove Theorem \ref{interpreting}:

\begin{corollary}
Let $\CM:=(M,<,\dots)$ an o-minimal structure with
elimination of imaginaries and a dense underlying order. Let $\CN$ be unstable
interpretable in $\CM$. Then $\CN$ interprets an o-minimal structure.
\end{corollary}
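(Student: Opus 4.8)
The plan is to run an induction inside $\CN^{eq}$, minimising first the (global) \th-rank among unstable definable sets and then, among those, the o-minimal dimension; the crucial point will be that Theorem \ref{unstable} keeps us within the unstable sets when Theorem \ref{dimension down} lowers the dimension. First I would set things up: since $\CM$ eliminates imaginaries, $\CN^{eq}$ is again definable in the real sort of $\CM$, so o-minimal dimension is defined on $\CN^{eq}$-definable sets, $\CN^{eq}$ eliminates $\exists^{\infty}$, and the global \th-ranks of its definable sets are finite. As $\CN$ is unstable, $N$ is an unstable $\CN^{eq}$-definable set, hence the class $\mathbf{\Phi}$ of unstable $\CN^{eq}$-definable sets is nonempty and $n:=\min\{\Tho(\phi(\C,c))\mid \phi(x,c)\in\mathbf{\Phi}\}$ is a well-defined natural number. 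I would fix $\phi(x,c)\in\mathbf{\Phi}$ with $\Tho(\phi(\C,c))=n$ and, among all such, with $d:=\dim_{\CM}\phi(\C,c)$ minimal (note $d\ge 1$, since $\phi(\C,c)$ is infinite).

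The heart of the argument is the claim that $d=1$. Granting it, Theorem \ref{shelah} gives an $\CN^{eq}$-definable quasi order $\pot$ with infinite chains on $\phi(\C,c)$ (its parameters absorbed into $c$), so $\CN_\phi:=(\phi(\C,c),\pot)$ is an unstable structure definable in $\CM$ whose universe has o-minimal dimension $1$; Corollary \ref{last corollary} (with $\CM$ itself as the witness of one-dimensionality) then produces an o-minimal structure interpretable in $\CN_\phi$, hence in $\CN^{eq}$ and in $\CN$, which finishes the proof. To prove $d=1$ I would argue by contradiction: assuming $d>1$, name $c$ as constants and apply Theorem \ref{dimension down} to $\CN_\phi$ to obtain an infinite $\CN_\phi$-definable (over some $b\supseteq c$) set $\theta(\C,b)\subseteq\phi(\C,c)$ with $\dim_{\CM}\theta(\C,b)<d$ together with an element $g\in\theta(\C,b)\setminus\macl(b)$ with $\ur_{\CN_\phi}(\tp(g/c))=\infty$. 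Since $\CN_\phi$ is definable in $\CN^{eq}$, a standard Ramsey extraction shows that $\CN_\phi$-dividing implies $\CN^{eq}$-dividing, so $\ur(\tp(g/c))=\infty$ in $\CN^{eq}$ as well; moreover $g\notin\macl(b)\supseteq\acl(b)$. Replacing $\theta$ by $\theta\wedge\phi$ I get $\theta(x,b)\vdash\phi(x,c)$, so Theorem \ref{unstable} applies with $a:=g$ and yields that $\theta(x,b)$ is unstable. But then $\theta(x,b)\in\mathbf{\Phi}$, so $\Tho(\theta(\C,b))\ge n$, while $\theta(x,b)\vdash\phi(x,c)$ forces $\Tho(\theta(\C,b))\le\Tho(\phi(\C,c))=n$; hence $\Tho(\theta(\C,b))=n$ and $\dim_{\CM}\theta(\C,b)<d$, contradicting the minimal choice of $d$.

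I expect the genuine obstacle to be precisely the point that Theorem \ref{unstable} is designed to overcome: Theorem \ref{dimension down} on its own only lowers the o-minimal dimension, and the lower-dimensional set it hands back may well be stable --- in $(\mathbb{R}^2,\prec)$ every unstable definable set has full o-minimal dimension --- so one cannot feed it back into an induction carried out in $\CN$ itself. Passing to $\CN^{eq}$ and pinning down a minimal-\th-rank unstable set $\phi(x,c)$ is what lets the construction internal to Theorem \ref{unstable} (an infinite definable family of almost disjoint copies of $\theta(x,b)$ inside $\phi(x,c)$, whose parameter space is then either stable, contradicting maximality of an $\ur$-rank, or unstable of \th-rank $<n$, contradicting minimality of $n$) force the dimension-reduced set $\theta(x,b)$ to stay unstable. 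Everything else is bookkeeping --- that \th-rank, $\ur$-rank and algebraic closure transfer correctly between $\CN_\phi$, $\CN^{eq}$ and $\CM$ --- and it is exactly in order to keep $\CN^{eq}$ definable in the real sort of $\CM$, so that the one-dimensional analysis of Section \ref{dimension1} and Theorem \ref{dimension down} apply verbatim, that elimination of imaginaries in $\CM$ is assumed.
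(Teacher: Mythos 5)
Your proposal is correct and follows essentially the same route as the paper: minimise the pair (\th-rank, o-minimal dimension) lexicographically over the unstable $\CN^{eq}$-definable sets, use Theorem \ref{dimension down} together with Theorem \ref{unstable} to show the minimiser must be $1$-dimensional, and then conclude via Corollary \ref{last corollary}. The extra bookkeeping you supply (transfer of $\ur$-rank, $\acl$ and $\exists^{\infty}$-elimination between $\CN_\phi$, $\CN^{eq}$ and $\CM$, and the explicit comparison $\Tho(\theta)\le\Tho(\phi)$) only makes explicit steps the paper leaves implicit.
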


\begin{proof}
Let $\Phi$ be the set of all (non algebraic) unstable
$\CN$-interpretable sets. For each $Z\in \Phi$ let $(\Tho(Z),
d(Z))$ be the pair consisting of the \th-rank and the o-minimal
dimension of $Z$. Let $Y\in \Phi$ minimise $(\Tho(Y), d(Y))$ in
the lexicographic order.

\begin{claim}\label{ominimal}
The o-minimal dimension of $Y$ is $1$.
\end{claim}

\begin{proof}
Suppose otherwise. Because $\CM$ eliminates imaginaries $Y$ is
definable in $\CM$. Let $A$ be a set over which $Y$ is definable.
Because $Y$ is unstable Theorem \ref{dimension down}
implies that there is some $Y_0\subset Y$ $\CN$-definable over $B\supset
A$ with $\dim Y_0 < \dim Y$. Moreover, there exists $a\in Y_0\setminus \acl(B)$ such that $\ur(\tp(a/A))=\infty$. By Theorem
\ref{unstable} $Y_0$ is unstable so by definition $Y_0\in \Phi$
contradicting the minimality of $(\Tho(Y), d(Y))$.
\end{proof}

By Claim \ref{ominimal} there is some $Y\in \Phi$ such that the
o-minimal dimension of $Y$ is 1. By Corollary \ref{last corollary}
$\Th(Y)$ interprets an o-minimal structure, with the desired conclusion.
\end{proof}
\noindent Recall the following from \cite{onshuus-peterzil}:
\begin{definition}\label{punstable}
A definable set $\phi(x,a)$ is \emph{purely unstable} if every
definable subset of $\phi(x,a)$ is unstable.
\end{definition}
\noindent So the last corollary shows, in particular: 
\begin{corollary}
 Let $\CM:=(M,<,\dots)$ be a dense o-minimal with elimination of imaginaries and $\CN$ unstable interpretable in $\CM$. Then $\CN$ interprets a purely unstable set. 
\end{corollary}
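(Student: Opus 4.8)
The plan is to observe that the o-minimal structure furnished by Theorem \ref{interpreting} is already purely unstable, so that essentially nothing remains to be done beyond unwinding Definition \ref{punstable}. By Theorem \ref{interpreting} the structure $\CN$ interprets an o-minimal structure; in particular $\CN$ interprets an infinite set $X$ carrying an $\CN$-interpretable linear order $<$, and (as established in the proof of that theorem) the full structure induced on $X$ from $\CN$ is o-minimal. Take $\phi(x,a)$ to be a formula defining $X$ in $\CN^{eq}$.

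First I would fix an arbitrary infinite $\CN^{eq}$-definable subset $X_0 \subseteq X$. The restriction of $<$ to $X_0$ is an $\CN^{eq}$-definable linear order, and since $X_0$ is infinite it is itself an infinite $<$-chain; thus $<|_{X_0}$ is an $\CN^{eq}$-definable partial order on $X_0$ with infinite chains. By Theorem \ref{stable} (equivalently, by Theorem \ref{shelah}) this forces $X_0$ to be unstable. As $X_0$ was an arbitrary infinite definable subset, $X$ is purely unstable. If one prefers to use o-minimality directly: $X_0$ is a finite union of points and open intervals, hence contains an infinite interval, which is again linearly ordered, and one concludes as before.

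There is essentially no obstacle here. The only points needing a word of care are purely notational: the ``definable subsets'' in Definition \ref{punstable} are those definable in $\CN^{eq}$ (equivalently, in the induced o-minimal structure on $X$), and, as usual, the clause ``every definable subset is unstable'' is read over infinite subsets, finite ones being trivially stable. One could equally well bypass the full statement of Theorem \ref{interpreting}: its proof produces a $1$-dimensional unstable $\CN$-interpretable set, to which Corollary \ref{last corollary} associates an o-minimal structure definable in $\CM$, and the same interval (or linear-order) argument shows that structure is purely unstable.
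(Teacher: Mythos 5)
Your argument is correct and is exactly what the paper intends: the paper derives this corollary immediately from Theorem \ref{interpreting}, the point being that the interpreted o-minimal set is purely unstable because every infinite definable subset inherits a definable linear order with infinite chains and is therefore unstable. Your handling of the finite-subset convention and the $\CN^{eq}$ bookkeeping matches the intended reading, so nothing further is needed.
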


Unfortunately, unlike the results of the previous sections, the
present proof does not seem to give significant local data. This
is one of the reasons why it is not clear to us, at this
stage, what should the right classification of theories
interpretable in o-minimal structures look like. The great
flexibility in creating local phenomena in o-minimal structures
(and to some extent even more so in their reducts) suggests that
analysability of types with respect to some ``nice'' collection of
types should be the right direction and the results of this paper
suggest that the class of o-minimal (by finite) types has
a crucial role in any such analysis. A sharpening of the results
of the present section could provide some level of analysis in
such terms for weakly unstable types, but the situation in the
stable part of the picture is much less obvious.

For types that contain a definable stable set an analysis exists
in terms of regular types, hopefully satisfying Zilber's
Trichotomy. Such a classification would give a good solution for the class of stably dominated types. However, as the example in Remark \ref{wstable} shows, not all
stable types (not even all strongly stable types) contain a
formula defining a stable set, or are even stably dominated. The following questions seem
natural, and will probably require some additional work:

\begin{enumerate}

\item Is there a (natural) geometric stability theoretic
distinction between stable and unstable types (aside from Shelah's
combinatorial definition) and between stable and strongly stable
types in reducts of o-minimal theories.

\item What role do stable regular types play in the space of types
of a reduct of an o-minimal theory. Do they satisfy Zilber's
trichotomy. Can a reasonable theory of analysability be developed
in reducts of o-minimal theories in terms of regular types and
finite by o-minimal types.

\item Since our local results relate only to types over models, it
seems natural to ask whether, in the present context, a reasonable
notion of prime models (over arbitrary sets) - an equivalent of
a-models in stable theories - exists and what is the right
framework for the development of such a theory.
\end{enumerate}

As we pointed out in the introduction, the results of \cite{Sh900}
- though not sufficient if one hopes for as sharp results as we
would like to obtain - suggest several directions of research that
may be of relevance to the above question.

\bibliographystyle{alpha}

\bibliography{all}

\end{document}